  \DeclareSymbolFont{AMSb}{U}{msb}{m}{n}
  \DeclareSymbolFontAlphabet{\mathbb}{AMSb}
\DeclareFontFamily{U}{mathx}{\hyphenchar\font45}
\DeclareFontShape{U}{mathx}{m}{n}{<-> mathx10}{}
\DeclareSymbolFont{mathx}{U}{mathx}{m}{n}
\DeclareMathAccent{\widebar}{0}{mathx}{"73}
\tikzstyle{arrow} = [thick,->,>=stealth]
\tikzset{Matrix/.style={matrix of nodes, font=\footnotesize,text height=1pt, text depth=0.5pt, text width=8.5pt, align=center, column sep=0pt, row sep=0pt, nodes in empty cells}}
\acrodef{fem}[FEM]{Finite Element Method}
\acrodef{fems}[FEMs]{Finite Element Methods}
\acrodef{gmg}[GMG]{Geometric Multigrid Method}
\acrodef{gmgs}[GMGs]{Geometric Multigrid Methods}
\acrodef{ddm}[DDM]{Domain Decomposition Method}
\acrodef{ddms}[DDMs]{Domain Decomposition Methods}
\acrodef{bddc}[BDDC]{Balancing Domain Decomposition by Constraints}
\acrodef{feti}[FETI]{Finite Element Tearing and Interconnecting}
\acrodef{feti-dp}[FETI-DP]{Dual-Primal Finite Element Tearing and Interconnecting}
\acrodef{hho}[HHO]{Hybrid High-Order}
\acrodef{hdg}[HDG]{Hybridizable Discontinuous Galerkin}
\acrodef{vem}[VEM]{Virtual Element Method}
\acrodef{wg}[WG]{Weak Galerkin}
\acrodef{dg}[DG]{Discontinuous Galerkin}
\acrodef{bnn}[BNN]{Balancing Neumann-Neumann}
\acrodef{pde}[PDE]{Partial Differential Equation}
\acrodef{pdes}[PDEs]{Partial Differential Equations}
\acrodef{dof}[DOF]{Degree of Freedom}
\acrodef{dofs}[DOFs]{Degrees of Freedom}
\numberwithin{equation}{section}
\newtheorem{theorem}{Theorem}
\numberwithin{theorem}{section}
\newtheorem{remark}[theorem]{Remark}
\newtheorem{lemma}[theorem]{Lemma}
\newtheorem{corollary}[theorem]{Corollary}
\newtheorem{proposition}[theorem]{Proposition}
\newtheorem{assumption}[theorem]{Assumption}
\newcommand{\Mh}{\mathcal{M}_{h}}
\newcommand{\Th}{\mathcal{T}_{h}}
\newcommand{\Fh}{\mathcal{F}_{h}}
\newcommand{\interior}{{\rm in}}
\newcommand{\boundary}{{\rm bd}}
\newcommand{\Fhi}{\Fh^{\interior}}
\newcommand{\Fhb}{\Fh^{\boundary}}
\newcommand{\FFhb}{\mathcal{F\!F}_h^{\boundary}}
\newcommand{\FFh}{\mathcal{F\!F}_h}
\newcommand{\Eh}{\mathcal{E}_{h}}
\newcommand{\MH}{\mathcal{M}_{H}}
\renewcommand{\TH}{\mathcal{T}_{H}}
\newcommand{\FH}{\mathcal{F}_{H}}
\newcommand{\wh}[1]{\widehat{#1}}
\newcommand{\wt}[1]{\widetilde{#1}} 
\newcommand{\ul}[1]{\underline{#1}}
\newcommand{\Uhbddc}{\ul{\wt{U}}_h}
\newcommand{\Uhbub}{\ul{U}_{h,0}}
\newcommand{\Uhbubb}{U_{h,0}^\partial}
\newcommand{\Uhc}{\wh{\ul{U}}_{h}}
\newcommand{\Uh}{\Uhc}
\newcommand{\Uhi}{U_{h}}
\newcommand{\Uhcb}{\wh{U}_{h}^{\partial}}
\newcommand{\Uhb}{\Uhcb}
\newcommand{\Uhcbb}{\wh{U}_{h}^{\partial,\boundary}}
\newcommand{\Uhbb}{\Uhcbb}
\newcommand{\bs}[1]{\boldsymbol{#1}}
\newcommand{\Uhd}{\ul{U}_{h}}
\newcommand{\Uhdb}{U_{h}^{\partial}}
\newcommand{\tr}{\gamma}
\newcommand{\dffp}{|x_f-x_{f'}|}
\newcommand{\dfg}{|x_f-x_{g}|}
\newcommand{\dfGbd}{\mbox{dist}(x_f,\partial\Gamma)}
\newcommand{\dist}{\mathop{\rm dist}}
\newcommand{\norm}[2]{\|#2\|_{#1}}
\newcommand{\seminorm}[2]{|#2|_{#1}}
\newcommand{\tnorm}[2]{\vert #2\vert_{#1}}
\newcommand{\dom}{\Omega}
\newcommand{\ol}[1]{\overline{#1}}
\newcommand{\A}{\mathcal{A}}
\newcommand{\G}{\mathcal{G}}
\newcommand{\Co}{\mathcal{C}}
\newcommand{\W}{\mathcal{W}}
\newcommand{\Ift}{\mathcal{I}_{ft}}
\newcommand{\HhalfOp}{\boldsymbol{\mathcal{H}}}
\newcommand{\U}{\mathcal{U}}
\newcommand{\V}{\mathcal{V}}
\newcommand{\Q}{\boldsymbol{\mathcal{Q}}}
\title[]{
Analysis of BDDC preconditioners for non-conforming polytopal hybrid discretisation methods
} 
\date{\today}
\keywords{}
\address{$^\dagger$School of mathematics\\Monash university\\Clayton\\Victoria 3800\\Australia}
\address{$^\sharp$IMAG, UMR CNRS 5149, Montpellier, France.}
\author[S. Badia]{Santiago Badia$^{\dagger}$\textsuperscript{*}}
\thanks{\textsuperscript{*}Corresponding author.}
\email{santiago.badia@monash.edu}
\author[J. Droniou]{Jerome Droniou$^{\sharp\dagger}$}
\email{jerome.droniou@umontpellier.fr}
\author[J. Manyer]{Jordi Manyer$^{\dagger}$}
\email{jordi.manyer@monash.edu}
\author[J. Tushar]{Jai Tushar$^{\dagger}$}
\email{jai.tushar@monash.edu}
\begin{document}

\begin{abstract}
	In this work, we build on the discrete trace theory developed by Badia, Droniou, and Tushar (Foundations of Computational Mathematics, in press, 2025; \href{https://doi.org/10.1007/s10208-025-09734-6}{doi:10.1007/s10208-025-09734-6}) to analyze the convergence rate of the Balancing Domain Decomposition by Constraints (BDDC) preconditioner generated from non-conforming polytopal hybrid discretizations. We prove polylogarithmic bounds on the condition number for the preconditioner that are independent of the mesh parameter and the number of subdomains, and that hold on polytopal meshes. The analysis relies on the continuity of a face truncation operator, which we establish in the fully discrete polytopal setting. To validate the theory, we present numerical experiments that confirm the truncation estimate and condition number bounds. In particular, we conduct weak scalability tests for second-order elliptic problems discretized using discontinuous skeletal methods, specifically Hybridizable Discontinuous Galerkin (HDG) and Hybrid High-Order (HHO) methods. We also demonstrate the robustness of the preconditioner for piecewise discontinuous coefficients with large jumps. 
\end{abstract}

\keywords{domain decomposition, preconditioner, polytopal mesh, hybrid discretisation, scalability}

\maketitle

%
\section{Introduction}\label{sec:introduction}

Polytopal methods are a class of \acp{fem} that have gained popularity in recent years due to their ability to relax conformity constraints on meshes. This flexibility makes them well-suited for handling complex geometries. Moreover, the support for general polytopal elements facilitates adaptive mesh refinement and coarsening—key features for applications such as the modelling of geophysical flows \cite{dipietro-droniou-lectureNoteHHO-2025}, where the mesh may include elements that are unnecessarily small or large in certain regions. In addition to addressing limitations of traditional \ac{fem} from a mesh perspective, polytopal methods eliminate the need for predefined local spaces by reconstructing quantities of interest directly from degrees of freedom. This approach simplifies the extension to higher-order schemes. For more details, we refer to several popular polytopal \ac{fem}s: the \ac{hho} method \cite{di-pietro.droniou:2020:hybrid,Cicuttin-Ern-Pignet-2021-HHO-Book}, the \ac{hdg} method \cite{Cockburn-Gopalakrishnan-Lazarov-2009-HDG}, the \ac{vem} \cite{Veiga-Brezzi-Marini-Russo-2023-VEM}, the \ac{wg} method \cite{Wang-Ye-WG}, and the \ac{dg} method \cite{cangiani.dong.ea:2017:discontinuous}, and references therein.

The design of efficient, robust and scalable solvers for linear systems arising from these kinds of discretisations is important to make them competitive with traditional methods in real-world applications. These include, but are not limited to, \ac{gmg} and non-overlapping \ac{ddm} solvers. We mention for example \cite{Cockburn-Dubios-Gopalakrishnan-Tan-2014-Multigrid-HDG,DPMMR,Di_Pietro-Dong-Kanschat-Rupp-2024-Multigrid} as a few references on the design and analysis of GMG solvers for polytopal methods. In this work, we focus on non-overlapping \ac{ddm}-based solvers, which include \ac{bddc} \cite{Dohrmann-BDDC-Algorithm} and \ac{feti-dp} \cite{Farhat-Lesoinne-LeTallec-Pierson-Rixen-2001-FETI-DP}. These solvers rely on the exchange of information across inter-subdomain boundaries, and require three main ingredients: a {\it trace inequality}, which implies that the restriction of functions to the subdomain interface is stable; a {\it lifting result}, which lifts this restriction to the interior of the neighbouring subdomain; and continuity of a face truncation operator on piecewise polynomial functions in $H^1(\dom)$. The trace and lifting results facilitate the stable propagation of ``information'' globally between subdomains. In other words, trace theory bounds the injection operator from the space of discontinuous solutions across subdomain boundaries into the original space, defined via a weighting operator and a subdomain-local discrete harmonic extension. The bound on the face restriction operator leads to a mesh-dependent logarithmic estimate observed in \ac{bddc} and \ac{feti-dp} preconditioners. We note that the condition number of \ac{bddc} and \ac{feti-dp} preconditioned systems are identical.

For gradient-conforming \ac{fem}, this is realised with the help of continuous trace theory and standard piecewise polynomial truncation estimates \cite{Toselli-Widlund-DDM-book,Brenner.Scott:2008:MTFEM}. These preconditioners \cite{Dryja_original} and their analyses have been extended to other conforming polytopal discretisations, such as the conforming \ac{vem}~\cite{Bertoluzza-Pennacchio-Prada-2017-BDDC-VEM,Bertoluzza-Pennacchio-Prada-2020-FETI-VEM} and isogeometric analysis~\cite{BEIRODAVEIGA2013}. For non-conforming \ac{fem}, the trace inequality and lifting property are non-trivial, since the trace of piecewise polynomial functions in $L^2(\dom)$ do not have $H^{1/2}(\partial\dom)$-regularity, where $\dom$ is the polytopal domain of interest in $\mathbb{R}^d$ $(d \geq 2)$ with boundary $\partial\dom$. In the current state-of-the art, the analysis of solvers for hybridised formulations relies on the existence of an interpolant onto a conforming finite element space and then leveraging the continuous trace theory. This idea was originally proposed in \cite{Cowsar-Mandel-Wheeler-1995-BDDC-Mixed} for \ac{bnn} preconditioning of mixed \ac{fem} and further extended to other non-conforming discretisations, e.g., \ac{dg} \cite{Diosady-Darmofal-2012-BDDC-DG}, \ac{hdg} \cite{Tu-Wang-BDDC-2016-HDG}, and \ac{wg} \cite{Tu-Wang-2018-BDDC-WG}. However, since this approach hinges on the construction of a conforming interpolant, all the analyses so far have been limited to conforming simplicial or quadrilateral/hexahedral meshes. Furthermore, to the best of our knowledge, there is no literature available addressing non-conforming \ac{vem} or \ac{hho}.

In this work, we derive condition number bounds for \ac{bddc} and \ac{feti-dp} preconditioners for non-conforming polytopal hybrid discretisations on quasi-uniform general polytopal meshes (see Assumption \ref{assum:reg.mesh}). In order to do this, we rely on the discrete trace theory recently developed in \cite{Badia-Droniou-Tushar-2024-DiscreteTraceTheory} for non-conforming polytopal methods. Without the need of conforming interpolants, this reference designs a discrete $H^{1/2}(\partial\dom)$-seminorm and shows that it enjoys properties analogous to the continuous trace seminorm, namely discrete trace inequality and discrete lifting with respect to a discrete $H^1(\dom)$-seminorm. To be applicable to the analysis of \ac{ddm}, we extend the theory by analysing the continuity of face truncation operators. Additionally, we have implemented the proposed solvers in GridapSolvers.jl \cite{Manyer-2024-GridapSolvers}, a package that provides distributed linear solvers for Gridap.jl, a Julia library for the numerical approximation of \ac{pde} \cite{Verdugo-Badia-2022-Gridap,Badia-Verdugo-2020-Gridap}. Through extensive numerical experiments conducted on a supercomputer, we have demonstrated the robustness and weak scalability of our solvers, successfully scaling up to several hundreds of processors while maintaining optimal performance.

The remainder of this paper is organised as follows. We first recall the construction of the discrete trace seminorm and main results from \cite{Badia-Droniou-Tushar-2024-DiscreteTraceTheory} in Section \ref{sec:Prelims}. In Section \ref{sec:FaceRestrictionOp}, we extend the theory developed in \cite{Badia-Droniou-Tushar-2024-DiscreteTraceTheory} by analysing the continuity of the face truncation operator with respect to the discrete trace seminorm on the discrete $H^1(\dom)$-seminorm (see Section \ref{sec:FaceRestrictionOp} and Theorem \ref{thm:TruncationEst}). The design of the \ac{bddc} preconditioner based on hybrid spaces in a functional setting is described in Section \ref{sec:bddc.setting}, with the main result on robustness of the preconditioner with respect to mesh size stated in Theorem \ref{thm:bddc} and proved in Section \ref{sec:bddc.analysis}. In Section \ref{sec:applications}, we numerically verify the theoretical results. We design an experiment in Section \ref{sec:Truncation.NumExp} which verifies the truncation estimate of Theorem \ref{thm:TruncationEst}. We also present weak scalability tests (see Section \ref{sec:WeakScalabilityTests} and Section \ref{sec:jumping.coefficients}) for the proposed preconditioner on two classes of discontinuous skeletal methods, namely \ac{hdg} and \ac{hho}. A brief description of the tested methods is given in Section \ref{sec:disc.skel.methods}. The tests verify the theoretical condition number bounds of Theorem \ref{thm:bddc} and show robustness of the preconditioner with respect to mesh size and number of subdomains. 

\subsection{Preliminaries}\label{sec:Prelims}
We briefly recall here the setting and main results in \cite{Badia-Droniou-Tushar-2024-DiscreteTraceTheory}.
Let $\dom \subset \mathbb{R}^d$ be a bounded polytopal domain for $d \geq 2$. We consider a partition of $\dom$ into disjoint polytopes gathered in a set $\Th$, which we refer to as \emph{cells}. The mesh \emph{skeleton} is the union of the boundaries of the cells (composed of faces), and for each $t\in \Th$, $\mathcal{F}_t$ denotes the set of faces of $t$. We define the set of boundary faces as
\[
\Fhb \doteq \bigcup_{t \in \mathcal{T}_h} \{ f \in \mathcal{F}_t \ : \ f \subset \partial \Omega \},
\] 
and the set of interior faces as
\[
\Fhi \doteq \bigcup_{t\in\mathcal T_h}\{f\in\mathcal F_t\ :\ f\subset\Omega\}.
\] 
The union of these two sets forms the set $\Fh$ of all faces of the mesh.
The \emph{hybrid} mesh $\Mh \doteq (\Th, \Fh)$ (see \cite[Definition 1.4]{di-pietro.droniou:2020:hybrid}) is the combination of cells and faces.
For any cell or face $Z$, we denote by $h_Z$ its diameter, and we define the \emph{mesh size} as $h = \max_{t \in \Th} h_t$. We make the following assumption on the mesh.

\begin{assumption}[Mesh regularity]\label{assum:reg.mesh}
  The mesh $\Mh$ is regular as per \cite[Definition 1.9]{di-pietro.droniou:2020:hybrid}, and quasi-uniform: there exists $\rho>0$ independent of $h$ such that, for all $t\in\Th$, $h\le \rho h_t$. 
\end{assumption} 

This assumption ensures that for each $Z\in\Th\cup\Fh$ there exists a point $x_Z\in Z$ such that $Z$ contains a ball of radius $\varpi h_Z$, with $\varpi$ independent of $h$. From hereon, we fix such a \emph{centroid} for each cell and face; hence, in the following, $x_t$ or $x_f$ refer to these centroids, for a cell $t$ or a face $f$, respectively.

On the mesh, we define a \emph{hybrid} space $\Uh = \Uhi \times \Uhb$ of polynomials in the cells and on the faces.\footnote{The use of a wide hat is motivated by the introduction of additional spaces in the definition of the \ac{bddc} preconditioner in Section \ref{sec:bddc.setting}.} Given $k\ge 0$, we set
\[
\Uhi \doteq \bigtimes_{t \in \Th} \mathbb{P}_k(t), \quad
\Uhb \doteq \bigtimes_{f \in \Fh} \mathbb{P}_k(f),  
\] 
where $\mathbb{P}_k$ is the space of polynomials of degree at most $k$. An element in $\Uh$ will be denoted as $\ul{v}_h = ((v_t)_{t \in \Th}, (v_f)_{f \in \mathcal{F}_t})$, with $v_t\in \mathbb{P}_k(t)$ for all $t\in\Th$ and $v_f\in \mathbb{P}_k(f)$ for all $f\in\Fh$.

The discrete $H^1(\dom)$-seminorm of $\ul{v}_h \in \Uh$ is defined as
\begin{equation*}
  \seminorm{1,h}{\ul{v}_h}\doteq \left[ \sum_{t\in\Th}  \left(\norm{L^2(t)}{\nabla v_t}^2+\sum_{f\in\mathcal{F}_t}h_t^{-1}\norm{L^2(f)}{v_f-v_t}^2\right)\right]^{1/2}.
\end{equation*}

We introduce the boundary space $\Uhbb \doteq \bigtimes_{f \in \Fhb} \mathbb{P}_k(f)$ 
and define the trace operator $\tr:\Uh \to \Uhbb$ by
   \[
   \tr (\ul{v}_h)|_f = v_f\qquad\forall f\in\Fhb\,,\quad\forall \ul{v}_h\in\Uh.
   \]
The discrete $H^{1/2}(\partial \dom)$-seminorm on $\Uhbb$ is given by
\begin{equation}\label{eq:def.disc.Hhalf}
  \tnorm{1/2,h}{w_h}^2 \doteq \sum_{f\in\Fhb} h_f^{-1} \norm{L^2(f)}{w_f - \ol{w}_f}^2 + \sum_{(f,f')\in\FFhb} |f|_{d-1} |f'|_{d-1} \frac{|\ol{w}_f - \ol{w}_{f'}|^2}{\dffp^d},
\end{equation}
where
\begin{equation*}
  \ol{w}_f \doteq \frac{1}{|f|_{d-1}} \int_{f} w_f \quad\text{ and }\quad\FFhb=\{(f,f')\in\Fhb\times\Fhb\,:\,f\neq f'\}.
\end{equation*}

These discrete $H^1$-seminorm, trace operator, and $H^{1/2}$-seminorm satisfy the following trace and lifting properties, proved in \cite{Badia-Droniou-Tushar-2024-DiscreteTraceTheory}. Here and in the following, $a\lesssim b$ means that $a\le c b$ for some constant $c>0$ depending only on $\Omega$, the mesh regularity parameters (see Assumption \ref{assum:reg.mesh}) and the polynomial degree $k$. Equivalence $a\simeq b$ means that $a\lesssim b$ and $b\lesssim a$.

\begin{theorem}[Trace inequality]\label{thm:trace}
  The following discrete trace inequality holds:
  \begin{equation}\label{eq:trace}
    \tnorm{1/2,h}{\tr (\ul{v}_h)} \lesssim \seminorm{1,h}{\ul{v}_h}\qquad\forall \ul{v}_h\in\Uh.
  \end{equation}
\end{theorem}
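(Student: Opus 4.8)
Write $\ol{v}_f := |f|_{d-1}^{-1}\int_f v_f$ for $f\in\Fhb$ and $\ol{v}_t := |t|_d^{-1}\int_t v_t$ for $t\in\Th$, and for a boundary face $f$ let $t(f)$ be the unique cell with $f\subset\partial t(f)$. The plan is to bound separately the two terms defining $\tnorm{1/2,h}{\tr(\ul{v}_h)}^2$ in \eqref{eq:def.disc.Hhalf}, using throughout the consequences of Assumption~\ref{assum:reg.mesh}: $h_f\simeq h_{t(f)}$, $|f|_{d-1}\simeq h_f^{d-1}$, a bounded number of faces per cell, and the polynomial inverse/discrete-trace and Poincaré--Wirtinger inequalities on regular polytopes.

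\emph{Local term.} Since subtracting the mean over $f$ is an $L^2(f)$-contraction, insert $v_{t(f)}$ to get
\[
\norm{L^2(f)}{v_f-\ol{v}_f}\le\norm{L^2(f)}{v_f-v_{t(f)}}+\norm{L^2(f)}{v_{t(f)}-\ol{v}_{t(f)}}.
\]
The first term is, up to $h_f\simeq h_{t(f)}$, a summand of $\seminorm{1,h}{\ul{v}_h}^2$; for the second, the polynomial discrete trace inequality on $t(f)$ followed by Poincaré--Wirtinger gives $\norm{L^2(f)}{v_{t(f)}-\ol{v}_{t(f)}}\lesssim h_{t(f)}^{1/2}\norm{L^2(t(f))}{\nabla v_{t(f)}}$. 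Multiplying by $h_f^{-1}$ and summing over $f\in\Fhb$, each cell being charged $O(1)$ times, yields $\sum_{f\in\Fhb}h_f^{-1}\norm{L^2(f)}{v_f-\ol{v}_f}^2\lesssim\seminorm{1,h}{\ul{v}_h}^2$.

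\emph{Nonlocal term: reduction to cell averages.} Split $\ol{v}_f-\ol{v}_{f'}=(\ol{v}_f-\ol{v}_{t(f)})+(\ol{v}_{t(f)}-\ol{v}_{t(f')})+(\ol{v}_{t(f')}-\ol{v}_{f'})$ and use $|a+b+c|^2\le 3(|a|^2+|b|^2+|c|^2)$. The contribution of the first and third pieces is, by symmetry of $\FFhb$,
\[
\lesssim\sum_{f\in\Fhb}|f|_{d-1}\,|\ol{v}_f-\ol{v}_{t(f)}|^2\,\Big(\sum_{f'\neq f}|f'|_{d-1}\,\dffp^{-d}\Big).
\]
A dyadic decomposition of $\partial\dom$ by distance to $x_f$, together with quasi-uniformity, bounds the inner sum by $\lesssim h_f^{-1}$; and $|f|_{d-1}|\ol{v}_f-\ol{v}_{t(f)}|^2\lesssim\norm{L^2(f)}{v_f-v_{t(f)}}^2+h_{t(f)}\norm{L^2(t(f))}{\nabla v_{t(f)}}^2$ by the same splitting as above, so this part is again $\lesssim\seminorm{1,h}{\ul{v}_h}^2$. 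There remains the pure cell-average sum $S:=\sum_{(f,f')\in\FFhb}|f|_{d-1}|f'|_{d-1}\dffp^{-d}|\ol{v}_{t(f)}-\ol{v}_{t(f')}|^2$; summands with $t(f)=t(f')$ vanish and $\dffp\simeq|x_{t(f)}-x_{t(f')}|$ otherwise, so collapsing the $O(1)$ boundary faces of each cell shows that $S$ is comparable to a discrete $H^{1/2}(\partial\dom)$-seminorm of the cellwise-constant field $t\mapsto\ol{v}_t$.

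\emph{Conclusion and main obstacle.} To control $S$ I would fall back on the classical continuous trace inequality $|u|_{H^{1/2}(\partial\dom)}\lesssim\norm{L^2(\dom)}{\nabla u}$ for $u\in H^1(\dom)$ on the Lipschitz domain $\dom$, by constructing a reconstruction $\mathcal{R}\ul{v}_h\in H^1(\dom)$ out of the cell values --- for instance a piecewise-affine averaging (Clément-type) interpolant on a shape-regular simplicial submesh of $\Mh$, which exists by Assumption~\ref{assum:reg.mesh} --- such that: (i) $\norm{L^2(\dom)}{\nabla\mathcal{R}\ul{v}_h}^2\lesssim\seminorm{1,h}{\ul{v}_h}^2$, where the \emph{interior}-face penalties in $\seminorm{1,h}{\ul{v}_h}^2=\sum_{t\in\Th}\big(\norm{L^2(t)}{\nabla v_t}^2+\sum_{f\in\mathcal{F}_t}h_t^{-1}\norm{L^2(f)}{v_f-v_t}^2\big)$ are precisely what absorbs the jumps of the cellwise data across mesh interfaces; and (ii) on each boundary face the mean of $\mathcal{R}\ul{v}_h$ equals $\ol{v}_{t(f)}$ up to errors of the type already estimated. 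Then a quadrature/geometric-series comparison bounds $S$ by $|\mathcal{R}\ul{v}_h|_{H^{1/2}(\partial\dom)}^2$ plus lower-order corrections, and the continuous trace inequality closes the argument. The genuinely delicate point is (i)--(ii): producing such an $H^1(\dom)$-stable reconstruction with the right boundary trace on \emph{arbitrary polytopal} meshes with no reference element --- equivalently, proving directly a logarithm-free trace inequality for cellwise-constant functions on the mesh dual graph --- and this is exactly where quasi-uniformity, the inscribed-ball property, and the bounded valence of the mesh from Assumption~\ref{assum:reg.mesh} are all used.
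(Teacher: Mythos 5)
There is a genuine gap. Your local-term estimate and the reduction of the nonlocal term to the pure cell-average sum $S=\sum_{(f,f')\in\FFhb}|f|_{d-1}|f'|_{d-1}\dffp^{-d}|\ol{v}_{t(f)}-\ol{v}_{t(f')}|^2$ are fine and standard (insertion of $v_{t(f)}$, polynomial trace and Poincar\'e--Wirtinger on the cell, the dyadic bound $\sum_{f'\neq f}|f'|_{d-1}\dffp^{-d}\lesssim h_f^{-1}$). But the bound on $S$ is the actual content of the theorem, and at exactly that point you stop proving and start describing: the existence of a reconstruction $\mathcal{R}\ul{v}_h\in H^1(\dom)$ satisfying your properties (i) (stability $\norm{L^2(\dom)}{\nabla\mathcal{R}\ul{v}_h}\lesssim\seminorm{1,h}{\ul{v}_h}$, with the interior penalties absorbing the jumps) and (ii) (boundary face means of $\mathcal{R}\ul{v}_h$ close to $\ol{v}_{t(f)}$ up to already-controlled errors) is asserted, not established, and you yourself flag it as ``the genuinely delicate point''. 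On general polytopal meshes such an averaging operator is not off-the-shelf; it could plausibly be built on the matching shape-regular simplicial submesh guaranteed by the regularity notion of Assumption~\ref{assum:reg.mesh}, but then (i) and (ii), as well as the comparison of the discrete double sum $S$ with the continuous $H^{1/2}(\partial\dom)$-seminorm of $\mathcal{R}\ul{v}_h$ (near-diagonal pairs, replacement of $\ol{v}_{t(f)}$ by the face mean of the reconstruction), all require quantitative proofs that your sketch does not supply. As written, the argument proves the theorem only modulo an unproven lemma that is at least as hard as the theorem itself.

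It is also worth noting that your fallback route --- conforming interpolant plus the continuous trace inequality --- is precisely the classical strategy that the source of this theorem, the discrete trace theory of Badia--Droniou--Tushar (2024) cited by the paper, was designed to avoid: there the inequality \eqref{eq:trace} is proved purely discretely, without any $H^1$-conforming reconstruction, by telescoping cell and face averages along cones emanating from boundary faces and counting faces in annuli (the same machinery reused in Section~\ref{sec:FaceRestrictionOp} of this paper, e.g.\ the bound $\#\W_{lf}\lesssim l^{d-2}$ you implicitly invoked). So even if your plan were completed, it would be a genuinely different, interpolant-based proof whose feasibility on arbitrary polytopal meshes is exactly the open issue the discrete approach circumvents.
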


\begin{theorem}[Lifting]\label{thm:lifting}
  There exists a discrete linear lifting operator $\mathcal{L}_h: \Uhbb \rightarrow \Uh$ such that: 
  \begin{equation}\label{eq:lifting}
    \seminorm{1,h}{\mathcal{L}_h(w_{h})}\lesssim \tnorm{1/2, h}{w_{h}}\qquad\forall w_h\in\Uhbb, \qquad \gamma \circ \mathcal{L}_h  = \text{Id}_{\Uhbb}.
  \end{equation}
\end{theorem}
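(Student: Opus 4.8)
The plan is to build $\mathcal{L}_h$ by linearity from separate liftings of the two contributions to $\tnorm{1/2,h}{\cdot}^2$. Given $w_h\in\Uhbb$, split $w_h = w_h^{\mathrm{o}}+w_h^{\mathrm{a}}$ with $w_h^{\mathrm{o}}|_f = w_f-\ol w_f$ (zero mean on each boundary face) and $w_h^{\mathrm{a}}|_f = \ol w_f$; the first term of $\tnorm{1/2,h}{w_h}^2$ controls $w_h^{\mathrm{o}}$ and the second (the Gagliardo-type double sum of the face means) controls $w_h^{\mathrm{a}}$. I would lift $w_h^{\mathrm{o}}$ \emph{locally}: for each boundary face $f$, letting $t_f$ be the unique cell containing it, set the value on $f$ to $w_f-\ol w_f$ and \emph{all} cell values and all remaining face values to $0$. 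Then every $\nabla v_t\equiv 0$, the only nonzero jumps are $h_{t_f}^{-1}\norm{L^2(f)}{w_f-\ol w_f}^2$, and quasi-uniformity ($h_{t_f}\simeq h_f$) gives $\seminorm{1,h}{\mathcal{L}_h(w_h^{\mathrm{o}})}^2\lesssim\sum_{f\in\Fhb}h_f^{-1}\norm{L^2(f)}{w_f-\ol w_f}^2\le\tnorm{1/2,h}{w_h}^2$.

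The core of the proof is the lifting of $w_h^{\mathrm{a}}$, which is a \emph{discrete harmonic extension} estimate. I would take cell values constant on each cell, interior-face values equal to the arithmetic mean of the two adjacent cell values, and boundary-face values equal to the data $\ol w_f$; then (using $h_Z\simeq h$) for any choice of cell values $\{v_t\}$,
\begin{equation*}
  \seminorm{1,h}{\mathcal{L}_h(w_h^{\mathrm{a}})}^2 \;\lesssim\; h^{d-2}\Bigl(\sum_{t\sim t'}|v_t-v_{t'}|^2 \;+\; \sum_{f\in\Fhb}|\ol w_f-v_{t_f}|^2\Bigr),
\end{equation*}
the first sum running over adjacent cells. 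It then remains to choose $\{v_t\}$ so that the right-hand side is $\lesssim\tnorm{1/2,h}{w_h}^2$. I would use an explicit weighted-averaging extension, $v_t = \bigl(\sum_{f\in\Fhb}|f|_{d-1}\theta(x_t,x_f)\bigr)^{-1}\sum_{f\in\Fhb}|f|_{d-1}\theta(x_t,x_f)\,\ol w_f$, with a decaying kernel $\theta$ whose support scale is a fixed multiple of $\dist(x_t,\partial\dom)$ (so that near the boundary $v_t$ only sees faces at distance $\lesssim h$, forcing $|\ol w_f-v_{t_f}|$ to be small). Each difference $|v_t-v_{t'}|$ (adjacent cells) and $|\ol w_f-v_{t_f}|$ is bounded by a $\theta$-weighted average of the increments $|\ol w_f-\ol w_{f'}|$; squaring, applying Cauchy--Schwarz against the weights, and summing over cells (resp.\ boundary faces) should reproduce the double sum $\sum_{(f,f')\in\FFhb}|f|_{d-1}|f'|_{d-1}\dffp^{-d}|\ol w_f-\ol w_{f'}|^2$ up to geometric constants. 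An alternative is to mollify the piecewise-constant boundary function $f\mapsto\ol w_f$ at scale $h$, obtaining $\tilde g\in H^{1/2}(\partial\dom)$ with $\seminorm{H^{1/2}(\partial\dom)}{\tilde g}^2\lesssim\tnorm{1/2,h}{w_h}^2$, apply the \emph{continuous} extension $H^{1/2}(\partial\dom)\to H^1(\dom)$, and take $v_t = \lproj{t}{k}$ applied to that extension; $L^2$-projection stability in the broken $H^1$-seminorm together with polytopal trace-approximation estimates then close the bound, at the cost of a small, controlled correction of the boundary values to restore $\gamma\circ\mathcal{L}_h=\mathrm{Id}$.

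Finally, $\mathcal{L}_h(w_h):=\mathcal{L}_h(w_h^{\mathrm{o}})+\mathcal{L}_h(w_h^{\mathrm{a}})$ is linear and satisfies $\gamma(\mathcal{L}_h(w_h))|_f = (w_f-\ol w_f)+\ol w_f = w_f$ for all $f\in\Fhb$ by construction, hence $\gamma\circ\mathcal{L}_h=\mathrm{Id}_{\Uhbb}$, and combining the two estimates yields \eqref{eq:lifting}. I expect the main obstacle to be the average part: proving, \emph{uniformly over quasi-uniform general polytopal meshes} and for a $(d-1)$-dimensional Lipschitz boundary whose faces lie in different hyperplanes, that the weighted-averaging (or mollified) extension simultaneously has discrete Dirichlet energy controlled by the discrete $H^{1/2}$-seminorm \emph{and} remains close enough to the boundary data in the $h_f^{-1}\norm{L^2(f)}{\cdot}^2$ sense — both estimates having to be drawn from the same double sum, which pins down how the kernel scale and the geometric weights must be balanced.
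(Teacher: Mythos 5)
First, a point of reference: this paper does not actually prove Theorem \ref{thm:lifting} --- it is recalled verbatim from the companion work \cite{Badia-Droniou-Tushar-2024-DiscreteTraceTheory}, so there is no in-paper proof to compare you against; your proposal can only be judged on its own terms. On those terms, the architecture is the natural one and is correct where you carry it out: the splitting $w_h=w_h^{\mathrm o}+w_h^{\mathrm a}$, the purely local lifting of the oscillation part (all cell and remaining face unknowns set to zero), the identity $\gamma\circ\mathcal L_h=\mathrm{Id}_{\Uhbb}$ by construction, and the reduction of the average part to choosing cell constants $v_t$ with $h^{d-2}\bigl(\sum_{t\sim t'}|v_t-v_{t'}|^2+\sum_{f\in\Fhb}|\ol w_f-v_{t_f}|^2\bigr)\lesssim\tnorm{1/2,h}{w_h}^2$ are all fine under Assumption \ref{assum:reg.mesh}.

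The gap is that the one estimate carrying the entire content of the theorem --- that the distance-proportional averaging extension has discrete energy controlled by the double sum $\sum_{(f,f')\in\FFhb}|f|_{d-1}|f'|_{d-1}\dffp^{-d}|\ol w_f-\ol w_{f'}|^2$ --- is asserted (``should reproduce the double sum'') rather than proved, and it is precisely where the polytopal geometry enters. To close it you need at least: (i) a uniform lower bound $\sum_{f\in\Fhb}|f|_{d-1}\theta(x_t,x_f)\gtrsim\dist(x_t,\partial\dom)^{d-1}$ on the kernel normalisation, which for a general polytopal $\dom$ (re-entrant corners, nearly touching boundary portions) must be drawn from the cone/Lipschitz character of $\partial\dom$ together with mesh regularity; (ii) the bound $|v_t-v_{t'}|\lesssim (h/\delta)\times$ (mean oscillation of the $\ol w_f$ over a boundary patch of radius $\simeq\delta$), with $\delta\simeq\dist(x_t,\partial\dom)$, for adjacent cells; and (iii) the dyadic counting argument: a pair $(f,f')$ with $\dffp\simeq\rho$ is seen by $\simeq(\delta/h)^d$ cells at distance $\simeq\delta\ge\rho$ from the boundary, and summing $h^{d-2}(h/\delta)^2|f|_{d-1}|f'|_{d-1}\delta^{-2(d-1)}$ over these cells and over dyadic $\delta\ge\rho$ must give $\simeq|f|_{d-1}|f'|_{d-1}\rho^{-d}$, i.e.\ the seminorm weight with no logarithmic loss, while the near-boundary consistency terms $|\ol w_f-v_{t_f}|$ are absorbed by the pairs with $\dffp\lesssim h$. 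These computations do close, so your route is viable, but as written it is a plan for the hard step rather than a proof of it; the alternative mollification route has the further unaddressed issue that the ``small, controlled correction'' restoring $\gamma\circ\mathcal L_h=\mathrm{Id}$ must itself be re-estimated in $\seminorm{1,h}{\cdot}$ by the same discrete seminorm.
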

We note that the $H^{1/2}$-seminorm in \eqref{eq:def.disc.Hhalf} has been introduced in \cite{Badia-Droniou-Tushar-2024-DiscreteTraceTheory} to prove the lifting property. Previous analyses using analogous techniques only considered $L^2(\partial\dom)$-norms \cite{Droniou.Eymard:2018:GDM} or only measuring local variations (first term in \eqref{eq:def.disc.Hhalf}) \cite[Eq.~(3.19)]{Cockburn-Dubios-Gopalakrishnan-Tan-2014-Multigrid-HDG}, which prevented the proof of a lifting property -- for which it is essential to control the long-range variations along the boundary, through the second term in the discrete $H^{1/2}$-seminorm.

\section{Truncation estimates for piecewise broken polynomial functions}\label{sec:FaceRestrictionOp}

In this section we establish an estimate in the discrete $H^{1/2}$-seminorm for truncated boundary functions, in the spirit of \cite[Equation 4.18]{Brenner-Park-Sung-2017-BDDC}, \cite[Lemma 4.1]{Bertoluzza-Pennacchio-Prada-2017-BDDC-VEM}, \cite[Section 7.5]{Brenner.Scott:2008:MTFEM} and \cite[Section 4.6]{Toselli-Widlund-DDM-book}. We consider a portion $\Gamma$ of $\partial\dom$ that satisfies three key properties: it is compatible with the underlying mesh, its size is comparable to that of $\Omega$, and its boundary has finite $(d-2)$-dimensional measure. These requirements are formalized in the following assumption.

\begin{assumption}[Regularity of $\Gamma$]\label{assum:reg.Gamma}~
    \begin{itemize}
    \item There is $\Fh(\Gamma)\subset\Fhb$ such that $\Gamma=\bigcup_{f\in\Fh(\Gamma)}\mathrm{cl}(f)$, where $\mathrm{cl}(f)$ is the closure of $f$.
    \item There is $C_\Gamma>0$ such that $C_\Gamma^{-1}\mathrm{diam}(\Omega)^{d-1}\le |\Gamma|_{d-1}$ and $|\partial\Gamma|_{d-2}\le C_\Gamma \mathrm{diam}(\Omega)^{d-2}$.
  \end{itemize}
\end{assumption}  

We then take $\ul{v}_h \in \Uh$ such that 
\begin{equation}\label{eq:int.F.v}
  \int_{\Gamma}\tr (\ul{v}_h) = 0
\end{equation}
and consider $w_h\in\Uhbb$ equal to $\tr (\ul{v}_h)$ on $\Gamma$ and to $0$ outside $\Gamma$:
\begin{align}\label{bddc.wf}
  w_f = 
  \left\{
  \begin{aligned}
    &v_f \;\; \mbox{if} \;\; f \in \Fh(\Gamma), \\
    &0  \;\;\;\; \mbox{if} \;\; f \notin \Fh(\Gamma),
  \end{aligned}
  \right.
  \quad \forall f\in\Fhb.
\end{align}

The following theorem provides an estimate for the truncated trace function $\tr(\ul{v}_h)$. To establish this result, we rely on the uniform cone condition satisfied by the polytopal domain $\Omega$. According to \cite[Section 4.8]{Adams-Fournier-Book}, this condition ensures the existence of: (i) a fixed reference cone $C$, (ii) a finite collection of subsets $((\partial \Omega)_i)_{i\in I}$ covering $\partial\Omega$ (with $\#I\sim 1$), and (iii) corresponding cones $(C_i)_{i\in I}$ such that each $C_i$ is congruent to $C$ and satisfies $(\partial \dom)_i+C_i\subset \dom$.

\begin{theorem}[Estimate of truncated boundary function]\label{thm:TruncationEst}
  Let $\ul{v}_h \in \Uh$ satisfy \eqref{eq:int.F.v} and $w_h \in \Uhbb$ be defined by \eqref{bddc.wf}. Under Assumption \ref{assum:reg.Gamma}, the following estimate holds:
  \begin{align}\label{bddc.main}
    \tnorm{1/2,h}{w_h} \lesssim \left(1 + \ln \left(\frac{\mathrm{diam}(\Omega)}{h}\right)\right) \seminorm{1,h}{\ul{v}_h},
  \end{align}
  where the hidden constant depends on the uniform cone condition parameters $(C,I,((\partial \Omega)_i)_{i\in I})$ for $\Omega$, the constant $C_\Gamma$ from Assumption \ref{assum:reg.Gamma}, the mesh regularity parameters from Assumption \ref{assum:reg.mesh}, and the polynomial degree $k$.
\end{theorem}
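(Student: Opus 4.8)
The plan is to split the discrete $H^{1/2}$-seminorm $\tnorm{1/2,h}{w_h}^2$ into its two constituent sums from \eqref{eq:def.disc.Hhalf}: the \emph{local} term $\sum_{f\in\Fhb} h_f^{-1}\norm{L^2(f)}{w_f-\ol w_f}^2$ and the \emph{nonlocal} term $\sum_{(f,f')\in\FFhb} |f|_{d-1}|f'|_{d-1}|\ol w_f-\ol w_{f'}|^2/\dffp^d$. For the local term, since $w_f$ equals $v_f$ on faces of $\Gamma$ and vanishes elsewhere, each summand is either exactly $h_f^{-1}\norm{L^2(f)}{v_f-\ol v_f}^2$ or zero, so this term is bounded directly by $\tnorm{1/2,h}{\tr(\ul v_h)}^2$ and hence by $\seminorm{1,h}{\ul v_h}^2$ via the trace inequality, Theorem \ref{thm:trace} — no logarithm needed here. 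All the difficulty, and the logarithmic factor, comes from the nonlocal term, because truncation destroys the cancellation between distant face-averages: where the pair $(f,f')$ straddles $\partial\Gamma$, one has $\ol w_f=\ol v_f$ but $\ol w_{f'}=0$, so $|\ol w_f-\ol w_{f'}|=|\ol v_f|$ rather than $|\ol v_f - \ol v_{f'}|$.

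For the nonlocal term I would first decompose the sum over $\FFhb$ according to whether $f,f'\in\Fh(\Gamma)$, both outside, or split. Pairs both outside contribute $0$; pairs both inside $\Gamma$ contribute at most the corresponding term in $\tnorm{1/2,h}{\tr(\ul v_h)}^2$, again controlled by $\seminorm{1,h}{\ul v_h}^2$ through Theorem \ref{thm:trace} with no logarithm. The genuinely new estimate is for the cross term $\sum_{f\in\Fh(\Gamma),\,f'\notin\Fh(\Gamma)} |f|_{d-1}|f'|_{d-1}|\ol v_f|^2/\dffp^d$; summing over $f'\notin\Fh(\Gamma)$ first, I would bound $\sum_{f'\notin\Fh(\Gamma)}|f'|_{d-1}/\dffp^d$ by an integral $\int_{\partial\Omega\setminus\Gamma} |x_f-y|^{-d}\,\mathrm{d}s(y)$, up to mesh-regularity constants. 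Because $x_f\in\Gamma$ and $\mathrm{dist}(x_f,\partial\Gamma)\ge c\,h_f$ (here one uses that $\Gamma$ is mesh-compatible, so $x_f$ sits a full face-diameter away from $\partial\Gamma$), this integral is $\lesssim h_f^{-1} + \dfGbd^{-1}$ after integrating in "rings" around $x_f$; combined with the co-area/annulus splitting near $\partial\Gamma$ this yields the key bound $\sum_{f'\notin\Fh(\Gamma)}|f'|_{d-1}/\dffp^d \lesssim \max(h_f,\dfGbd)^{-1}$, and the $|\partial\Gamma|_{d-2}\lesssim\mathrm{diam}(\Omega)^{d-2}$ hypothesis controls how much of $\Gamma$ lies close to $\partial\Gamma$.

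It then remains to absorb the resulting contribution $\sum_{f\in\Fh(\Gamma)} |f|_{d-1}\,|\ol v_f|^2\,\max(h_f,\dfGbd)^{-1}$. Using \eqref{eq:int.F.v}, i.e. $\int_\Gamma\tr(\ul v_h)=0$, I can subtract the $\Gamma$-average $\ol v_\Gamma=0$ freely and estimate $|\ol v_f|=|\ol v_f-\ol v_\Gamma|$. The standard trick — following \cite[Section 4.6]{Toselli-Widlund-DDM-book} and \cite[Lemma 4.1]{Bertoluzza-Pennacchio-Prada-2017-BDDC-VEM} — is a discrete Sobolev/Poincaré-type inequality: $\max_{f\in\Fh(\Gamma)}|\ol v_f - \ol v_\Gamma|^2 \lesssim (1+\ln(\mathrm{diam}(\Omega)/h))\,\tnorm{1/2,h}{\tr(\ul v_h)}^2$, an $L^\infty$-in-$H^{1/2}$ embedding in the critical dimension that produces exactly the single logarithm. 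Plugging this in, and using $\sum_{f\in\Fh(\Gamma)}|f|_{d-1}\max(h_f,\dfGbd)^{-1}\lesssim \ln(\mathrm{diam}(\Omega)/h)\cdot\mathrm{diam}(\Omega)^{d-2} \lesssim \ln(\mathrm{diam}(\Omega)/h)\,|\Gamma|_{d-1}^{(d-2)/(d-1)}$ (the weighted-area sum over a strip of faces near $\partial\Gamma$ again contributing a logarithm, controlled by $C_\Gamma$), gives the squared bound $(1+\ln(\mathrm{diam}(\Omega)/h))^2\,\tnorm{1/2,h}{\tr(\ul v_h)}^2$; taking square roots and applying Theorem \ref{thm:trace} one last time produces \eqref{bddc.main}.

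The main obstacle I anticipate is making the annulus/integral estimate $\sum_{f'\notin\Fh(\Gamma)}|f'|_{d-1}/\dffp^d \lesssim \max(h_f,\dfGbd)^{-1}$ rigorous on a general polytopal mesh: one must convert a sum of face weights into a surface integral with uniform constants, handle the fact that $\partial\Omega$ is only Lipschitz (this is where the uniform cone condition on $\Omega$ enters, to flatten $\partial\Omega$ locally and control the geometry of the strip of faces abutting $\partial\Gamma$), and keep careful track of the dependence on $\mathrm{diam}(\Omega)$ so that the final estimate is genuinely scale-invariant. The discrete Sobolev inequality giving the logarithm is classical in spirit, but here it must be re-derived for the fully discrete seminorm $\tnorm{1/2,h}{\cdot}$ rather than the continuous $H^{1/2}$-norm — presumably this is exactly the kind of estimate that the discrete trace theory of \cite{Badia-Droniou-Tushar-2024-DiscreteTraceTheory} makes available, or that can be bootstrapped from Theorems \ref{thm:trace}–\ref{thm:lifting}.
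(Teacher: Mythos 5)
Your decomposition of $\tnorm{1/2,h}{w_h}^2$ (local term plus within-$\Gamma$ pairs controlled by $\tnorm{1/2,h}{\tr(\ul{v}_h)}$ and the trace inequality, all the difficulty in the cross term) is exactly the paper's starting point, and your annulus/integral bound $\sum_{f'\notin\Fh(\Gamma)}|f'|_{d-1}/\dffp^d\lesssim \max(h_f,\dfGbd)^{-1}$ is the paper's estimate \eqref{eq:est.E0}, obtained there via the sets $\W_l$ and the cardinality bound $\#\W_{lf}\lesssim l^{d-2}$; that part of your plan is sound. The gap is in how you finish. You propose to pull out $\max_{f\in\Fh(\Gamma)}|\ol{v}_f-\ol{v}_\Gamma|^2$ and control it by a discrete Sobolev embedding
$\max_{f}|\ol{v}_f - \ol{v}_\Gamma|^2 \lesssim (1+|\ln h|)\,\tnorm{1/2,h}{\tr(\ul{v}_h)}^2$,
calling this ``$L^\infty$-in-$H^{1/2}$ in the critical dimension''. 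This is only critical when $d=2$ (one-dimensional boundary). For $d\ge 3$ the boundary is $(d-1)$-dimensional with $d-1\ge 2$, and $H^{1/2}(\partial\Omega)$ is subcritical for $L^\infty$ by a full half derivative, not logarithmically: already in the continuous setting, $v(x)=|x-x_0|^{-\alpha}$ with $0<\alpha<1/2$ lies in $H^{1/2}$ of a two-dimensional surface but is unbounded, and its discretisation at scale $h$ has sup $\simeq h^{-\alpha}$ with bounded $H^{1/2}$-seminorm. So the key lemma you invoke is false for $d\ge 3$, and no choice of constants rescues a strategy that bounds the cross term by $\bigl(\sup_f|\ol{v}_f|^2\bigr)\times\sum_f |f|_{d-1}\max(h_f,\dfGbd)^{-1}$ in 3D.

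This is precisely why the paper does not take the sup. Instead it proves a layerwise $L^2$-type estimate (Proposition \ref{prop:bddc.L2}): for each band $\G_r$ of faces at distance $\simeq rh$ from $\partial\Gamma$,
$\sum_{f\in\G_r}|f|_{d-1}|\ol{v}_f|^2 \lesssim h(1+|\ln h|)\seminorm{1,h}{\ul{v}_h}^2$,
obtained by expressing $\ol{v}_f$ through a telescoping sum of face/cell average jumps along segments $[x_f,x_t]$ for cells $t$ in a truncated cone $x_f+C$ (uniform cone condition), applying a weighted Cauchy--Schwarz whose harmonic-series weight produces one logarithm, averaging over the cells of the truncated cone (whose total volume is $\gtrsim 1$), and reordering sums with the counting Lemmas \ref{lem:bddc.Iftm}--\ref{lem:bddc.Q}; the zero-mean condition \eqref{eq:int.F.v} then enters through a boundary Poincar\'e--Wirtinger inequality rather than through a pointwise bound. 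The second logarithm comes, as in your plan, from summing $1/r$ over the layers. If you want to keep your structure, the statement you must prove in place of the sup bound is exactly this band estimate — an $L^2$ bound over a codimension-one strip of the boundary — and that is where essentially all the work of the paper's proof lies (it is also valid, and needed, uniformly in $d\ge 2$). Note also that this part cannot be bootstrapped from Theorems \ref{thm:trace}--\ref{thm:lifting} alone; it genuinely uses the cone construction and the mesh-counting lemmas.
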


In the rest of this section, the hidden constants in the inequalities have the same dependencies as in this theorem.

\begin{remark}[Scaling]\label{rem:Truncation.Scaling}
  Without loss of generality, we assume that $\dom$ has unit diameter in our analysis. This assumption is justified by a scaling argument: when $\dom$ is scaled by a factor $\mu$, both the discrete $H^{1/2}(\partial\dom)$ and $H^1(\dom)$-seminorms scale with $\mu^{d-2}$. Consequently, the hidden constants remain independent of $\mbox{diam}(\dom)$. Additionally, the constant $C_\Gamma$ from Assumption \ref{assum:reg.Gamma} remains invariant under domain scaling.
\end{remark}

\subsection{Notations}\label{sec:TruncEst.Notations}
Building upon the framework established in \cite{Badia-Droniou-Tushar-2024-DiscreteTraceTheory}, we first recall the concept of distance measurements along $\partial\dom$ that was instrumental in proving Theorems \ref{thm:trace} and \ref{thm:lifting} for polytopal domains. To establish Theorem \ref{thm:TruncationEst}, we will need to work with carefully constructed partitions of both the face and cell sets. In what follows, we introduce these partitions along with the necessary notation. A summary of the notations used in this section is provided in Tables \ref{Table:Notation_Symbols} and \ref{Table:Notation_Sets}.

Given $l \geq 1$, we define the set $\W_l$ as the collection of face pairs $(f,f') \in \FFhb$ whose centroids are {\it within a distance $lh$ of each other} in the set, i.e.,
\begin{align}\label{set:Wl}
  \W_l \doteq \{(f,f')\in\FFhb\,:  (l-1)h \leq |x_f-x_{f'}| < lh\}.
\end{align}
For each face $f \in \Fhb$, we define its corresponding ``annulus of faces'' $\W_{lf}$,
\begin{align}\label{set:Wlf}
\W_{lf} \doteq \{ f' \in \Fhb : (f,f') \in \W_l \}.
\end{align} 

For any face $f \in \Fh(\Gamma)$, we define $\dfGbd$ as the distance along $\partial\dom$ from its centroid $x_f$ to the boundary of $\Gamma$. We partition the faces in $\Fh(\Gamma)$ into layers based on their distance from the boundary, defining for each $r \geq 0$ the layer
\begin{align}\label{def:bddc.layers}
  \G_r \doteq \{f \in \Fh(\Gamma): rh \leq \dfGbd < (r+1)h\}.
\end{align}
Figure \ref{fig:bddc-1} provides a visual representation of this layering. While the figures depict $\Gamma$ as a flat face of $\Omega$ for clarity, this is not required in our subsequent analysis.
  
\begin{figure}[htbp!]
  \includegraphics[width=.7\linewidth]{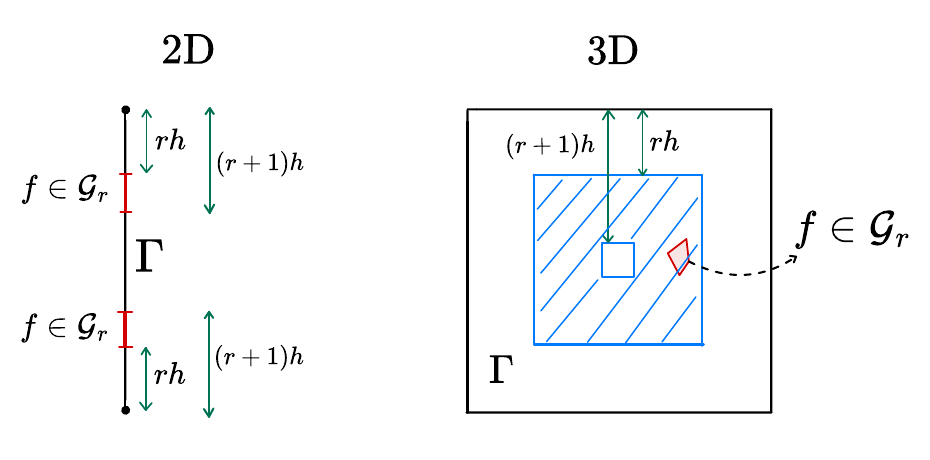}
  \caption{Layers of faces from the boundary of $\Gamma$.}
  \label{fig:bddc-1}
\end{figure}

Let $d_0$ denote the height of the reference cone $C$ in the uniform cone condition, which satisfies $d_0\simeq\mathrm{diam}(\Omega)=1$. For each layer $\G_r$, we decompose it according to the covering $((\partial \Omega)_i)_{i\in I}$ by writing $\G_r = \bigcup_{i \in I} \G_r^i$ where $\G_r^i=\G_r\cap(\partial\Omega)_i$. By construction, each subset $\G_r^i$ satisfies $\G_r^i+C_i\subset \dom$ for all $i\in I$ (see Figure \ref{fig:bddc-2}). In our subsequent analysis, we need to bound sums of the form $\sum_{f\in\G_r}a_f$ where $a_f\ge 0$. Since the number of covering sets $I$ is bounded independently of $h$ ($\#I\sim 1$), it suffices to bound $\sum_{f\in\G_r^i}a_f$ for each $i\in I$. For clarity of presentation, we will focus on a single subset $\G_r^i$ and omit the index $i$ in what follows.

\begin{figure}[htbp!]
  \includegraphics[width=.7\linewidth]{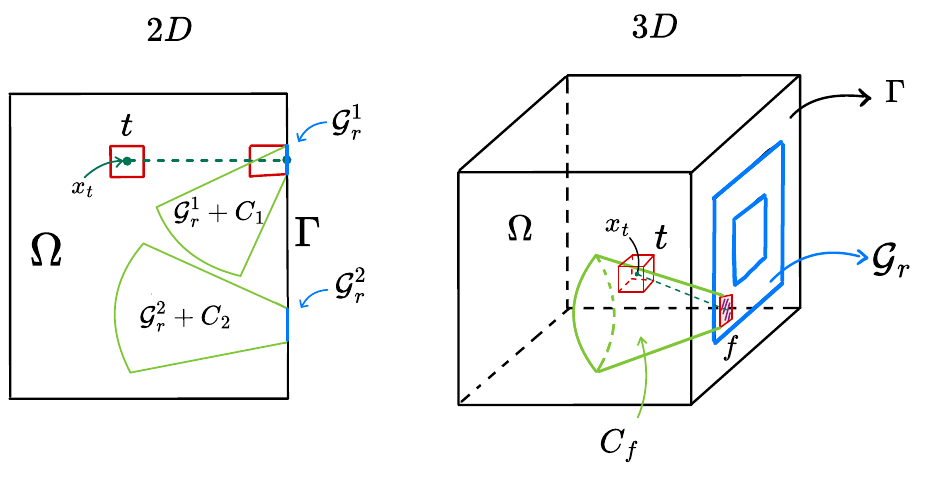}
  \caption{Illustration of the uniform cone condition.}
  \label{fig:bddc-2}
\end{figure}

For any face $f\in\G_r$, we define $\Co_f$ as the collection of cells that intersect the cone $x_f + C$:
\begin{equation}\label{def:bddc.Cf}
  \Co_f \doteq \{t \in \Th: t \; \mbox{intersects} \; x_f + C\}.
\end{equation}
We further introduce $\Co_{f{d_0}}$ as the subset of cells in $\Co_f$ that lie at a distance of at least $\frac{d_0}{2} - h$ from $x_f$:
\begin{equation}\label{eq:def.Cfd0}
  \Co_{f{d_0}} \doteq \left\{t \in \Co_f:  |x_t - x_f| \geq \frac{d_0}{2} - h\right\}.
\end{equation}
This set $\Co_{f{d_0}}$ represents the cells that intersect only the ``second half" of the cone $\Co_f$, away from the face $f$ (see Figure \ref{fig:bddc-3}).

\begin{figure}[htbp!]
  \includegraphics[width=7cm,height=6cm]{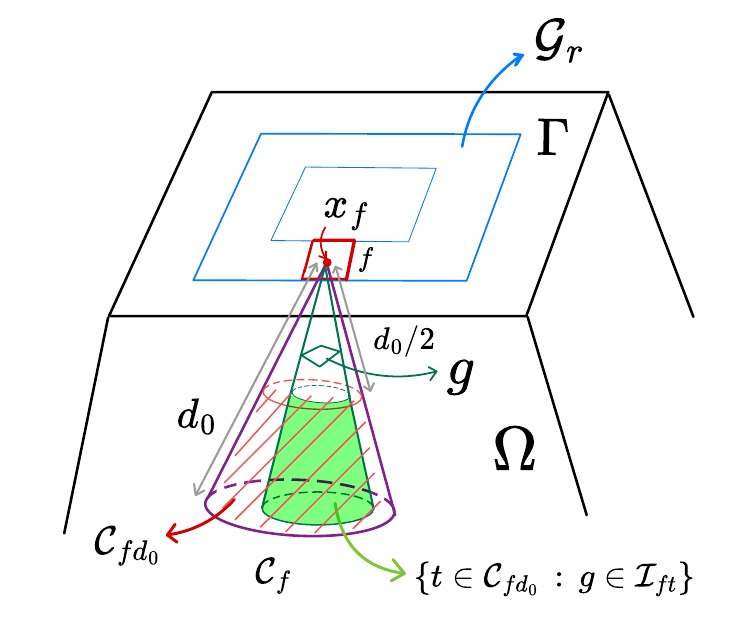}
  \caption{Visual representation of the sets $\Co_f$ (full cone), $\Co_{fd_0}$ (truncated cone), and the set discussed in Lemma \ref{lem:bddc.Cfd}-\ref{lem:bddc.Cfd.card}. Note that while these sets are unions of cells, the figure shows a simplified geometric approximation of the domain they cover.}
  \label{fig:bddc-3}
\end{figure}
For any face $f \in \Fh(\Gamma)$ and cell $t \in \Th$, we define $\Ift$ as the set of internal faces that intersect the line segment $[x_f, x_t]$ between their centroids:
\begin{align}\label{def:set.Ift}
  \Ift \doteq \{g \in \Fhi: g \; \mbox{intersects} \; [x_f, x_t]\}.
\end{align}
We partition $\Ift$ into concentric layers based on the distance from each face to $f$. For each non-negative integer $m$, the $m$-th layer consists of faces at a distance between $mh$ and $(m+1)h$ from $f$:
\begin{align}\label{def:set.Iftm}
  \Ift^m = \{g \in \Ift: mh \leq \dfg < (m+1)h\}.
\end{align}

\begin{table}[htbp!]
  \renewcommand*{\arraystretch}{1.4}
  \begin{tabular}{@{}l@{\hspace{1em}}p{0.8\textwidth}@{}}
    \toprule
    \textbf{Symbol} & \textbf{Description} \\
    \midrule
    $x_Z$ & Centroid of $Z$, a face or cell of the mesh \\
    \midrule
    $\dfGbd$ & Distance between $x_f \in \Fh(\Gamma)$ and the boundary of $\Gamma \subset \partial\Omega$, taken along $\partial\dom$ \\
    \midrule
    $D(x,r)$ & Disc on $\partial\dom$ centered at $x$ with radius $r$ \\
    \bottomrule
  \end{tabular}
  \caption{Points, faces and distance}
  \label{Table:Notation_Symbols}
\end{table}

\begin{table}[htbp!]
  \renewcommand*{\arraystretch}{1.4}
  \begin{tabular}{@{}l@{\hspace{1em}}p{0.8\textwidth}@{}}
    \toprule
    \textbf{Symbol} & \textbf{Description} \\
    \midrule
    $\FFhb$ & Set of pairs $(f,f')$ of distinct faces on $\partial\Omega$ \\
    \midrule
    $\W_l$ & Set of pairs of boundary faces that are approximately at distance $\simeq lh$ of each other. See \eqref{set:Wl} \\
    \midrule
    $\W_{lf}$ & Slice of the set $\W_l$ at the face $f$, that is, faces $f'$ that are \emph{approximately} at distance $lh$ of $f$. See \eqref{set:Wlf} \\
    \midrule
    $\G_r$ & Split of faces in $\Fh(\Gamma)$ according to their layers from the boundary of $\Gamma$. See \eqref{def:bddc.layers} \\
    \midrule
    $\Ift$ & For a fixed $f \in \Fh(\Gamma)$ and $t \in \Th$, the set of faces in $\dom$ that intersect the line segment $[x_f, x_t]$. See \eqref{def:set.Ift} \\
    \midrule
    $\Ift^m$ & Slice of $\Ift$ according to the distance to $f$. See \eqref{def:set.Iftm} \\
    \bottomrule
  \end{tabular}
  \caption{Sets of faces and regions}
  \label{Table:Notation_Sets}
\end{table}

\subsection{Proof of Theorem \ref{thm:TruncationEst}}\label{sec:Pf.FaceRestrictionOp}

Let $\FFh(\Gamma)$ denote the set of all distinct face pairs $(f,f')$ where both faces lie in $\Gamma$. The discrete $H^{1/2}$-seminorm \eqref{eq:def.disc.Hhalf} of $w_h$ (defined from $\ul{v}_h$ via \eqref{bddc.wf}) can be decomposed into four terms:
\begin{align*}
  \tnorm{1/2, h}{w_h}^2 
  &= \sum_{f\in \Fh(\Gamma)} h_f^{-1} \norm{L^2(f)}{v_f - \ol{v}_f}^2 + \sum_{(f,f')\in \FFh(\Gamma)} |f|_{d-1} |f'|_{d-1} \frac{|\ol{v}_f - \ol{v}_{f'}|^2}{\dffp^d} \\
  &\quad + \sum_{f \in \Fh(\Gamma)} \sum_{f' \in \Fhb \backslash \Fh(\Gamma)} |f|_{d-1} |f'|_{d-1} \frac{|\ol{v}_f|^2}{\dffp^d} + \sum_{f \in \Fhb \backslash \Fh(\Gamma)} \sum_{f' \in \Fh(\Gamma)} |f|_{d-1} |f'|_{d-1} \frac{|\ol{v}_{f'}|^2}{\dffp^d}.
\end{align*}
The first two terms represent the seminorm contributions from faces within $\Gamma$, while the last two terms account for interactions between faces in $\Gamma$ and faces outside $\Gamma$. The first two terms are bounded by $\tnorm{1/2, h}{\tr (\ul{v}_h)}^2$, which includes these terms summed over all faces. The last two terms are identical due to the symmetric roles of $f$ and $f'$. This leads to:
\begin{equation}\label{eq:bddc.Hhalf}
  \tnorm{1/2, h}{w_h}^2 \leq \tnorm{1/2, h}{\tr (\ul{v}_h)}^2 + 2 \underbrace{\sum_{f \in \Fh(\Gamma)} \sum_{f' \in \Fhb \backslash \Fh(\Gamma)} |f|_{d-1} |f'|_{d-1} \frac{|\ol{v}_f|^2}{\dffp^d}}_{\doteq E_0} \overset{\eqref{eq:trace}}\lesssim \seminorm{1, h}{\ul{v}_h}^2 + E_0.
\end{equation}
Given that $\mathrm{diam}(\Omega)=1$ (as established in Remark \ref{rem:Truncation.Scaling}), the proof of \eqref{bddc.main} hinges on demonstrating the following bound:
\begin{align}\label{bddc.E0}
  E_0 \lesssim (1 + |\ln h|)^2 \seminorm{1,h}{\ul{v}_h}^2.
\end{align}

We decompose the sum over ${f' \in \Fhb \backslash \Fh(\Gamma)}$ in $E_0$ into concentric annuli around each face $f\in\Fh(\Gamma)$ using the sets $\W_l$ and $\W_{lf}$ from \eqref{set:Wl} and \eqref{set:Wlf}:
\begin{equation}\label{eq:part.l0.1}
  \sum_{f' \in \Fhb \backslash\Fh(\Gamma)} \frac{|f'|_{d-1}}{\dffp^d} = \sum_{l=l_0}^{L} \sum_{f' \in \W_{lf} \backslash \Fh(\Gamma)} \frac{|f'|_{d-1}}{\dffp^d},
\end{equation}
where $L$ is chosen so that $Lh \ge\mathrm{diam}(\Omega)= 1$, ensuring that $(\W_{lf})_{l = 1, \dots, L}$ forms a complete partition of $\Fhb$ (the choice of $l_0\ge 1$ will be discussed below). For faces $f'\in \W_{lf}$, the distance $\dffp$ satisfies $(l-1)h\le \dffp <lh$. When $l\ge 2$, the distances $(l-1)h$, $lh$, and $(l+1)h$ are comparable, yielding $\dffp\simeq (l+1)h$. This scaling also holds for $l=1$ since mesh regularity and $f'\neq f$ ensure $\dffp\gtrsim h$. Substituting $\dffp\simeq (l+1)h$ into \eqref{eq:part.l0.1} gives
\begin{equation}\label{eq:part.l0}
  \sum_{f' \in \Fhb \backslash\Fh(\Gamma)} \frac{|f'|_{d-1}}{\dffp^d} \simeq \sum_{l = l_0}^{L} \frac{1}{((l+1)h)^d} \sum_{f' \in \W_{lf}} |f'|_{d-1}.
\end{equation}
By \cite[Lemma 3.2]{Badia-Droniou-Tushar-2024-DiscreteTraceTheory}, the cardinality of $\W_{l f}$ is bounded by $\# \W_{l f} \lesssim l^{d-2}$ for all $f \in \Fhb$ and $l\ge 1$. Together with the mesh quasi-uniformity bound $|f'|_{d-1} \simeq h^{d-1}$, this leads to
\begin{equation}\label{eq:part.l0.2}
  \sum_{f' \in \Fhb \backslash \Fh(\Gamma)} \frac{|f'|_{d-1}}{\dffp^d} \lesssim \sum_{l=l_0}^{L} \frac{l^{d-2} h^{d-1}}{((l+1)h)^d} \le \sum_{l=l_0}^{L} \frac{h}{((l+1)h)^2}.
\end{equation}
To bound this sum, we observe that $\frac{1}{((l+1)h)^2}\le \frac{1}{x^2}$ for $x\in [lh,(l+1)h]$, which implies
\begin{equation}\label{eq:part.l0.3}
  \sum_{l = l_0}^{L}  \frac{h}{((l+1)h)^2} \leq \sum_{l = l_0}^{L} \int_{l h}^{(l+1)h} \frac{dx}{x^2} = \int_{l_0 h}^{(L+1)h} \frac{dx}{x^2} \leq  \frac{1}{l_0 h}.
\end{equation}
The value of $l_0$ is determined by the requirement that $\W_{lf}\backslash\Fh(\Gamma)\neq\emptyset$ for \eqref{eq:part.l0} to be valid. This ensures the existence of a face $f' \in \Fhb \backslash \Fh(\Gamma)$ with $\dffp \le l_0 h$, establishing that 
$$
  \dfGbd = \dist(x_f,\partial \Omega\backslash \Gamma) \le |x_f - x_{f'}| \le l_0h
$$
(where distances are measured along $\partial \Omega$). Using this $l_0$ in \eqref{eq:part.l0.3} and combining with \eqref{eq:part.l0.2} yields
$$
  \sum_{f' \in \Fhb \backslash \Fh(\Gamma)}  \frac{|f'|_{d-1}}{\dffp^d} \lesssim \frac{1}{\dfGbd}, 
$$
and consequently
\begin{equation}\label{eq:est.E0}
  E_0 \lesssim \sum_{f \in \Fh(\Gamma)} |f|_{d-1} \frac{|\ol{v}_f|^2}{\dfGbd}.
\end{equation}
We now organize the faces in this sum according to the layers $\G_r$ from \eqref{def:bddc.layers}. Taking $R$ such that $Rh\simeq \mathrm{diam}(\Omega)=1$ ensures that $(\G_r)_{r=1,\dots,R}$ forms a complete partition of $\Fh(\Gamma)$. This gives
\begin{equation*}
  E_0 \lesssim \sum_{r=1}^{R} \sum_{f \in \G_r} |f|_{d-1} \frac{|\ol{v}_f|^2}{\dfGbd} \leq \sum_{r=1}^{R} \frac{1}{rh} \sum_{f \in \G_r} |f|_{d-1} |\ol{v}_f|^2.
\end{equation*}
Applying Proposition \ref{prop:bddc.L2} leads to
\begin{equation}\label{eq:bddc.E0.2}
  E_0 \lesssim \left(\sum_{r=1}^{R} \frac{h}{rh}\right) (1 + |\ln h|) \seminorm{1,h}{\ul{v}_h}^2. 
\end{equation}
We bound the remaining sum by writing
\begin{align}\label{eq:bddc.log}
  \sum_{r=1}^{R} \frac{h}{rh} = 1 + \sum_{r=2}^{R} \frac{h}{rh} \leq 1 + \sum_{r=2}^{R} \int_{(r-1)h}^{rh} \frac{ds}{s} = 1 + \int_{h}^{Rh} \frac{ds}{s} = 1 + \ln R.
\end{align}
Since $Rh\simeq 1$, we have $\sum_{r=1}^{R} \frac{h}{rh} \lesssim 1 + |\ln h|$. Substituting this into \eqref{eq:bddc.E0.2} proves \eqref{bddc.E0}. Finally, combining \eqref{bddc.E0} with \eqref{eq:bddc.Hhalf} establishes the desired estimate \eqref{bddc.main}. \qed

Before proving Proposition \ref{prop:bddc.L2} - a key ingredient in the preceding analysis - we first establish several technical lemmas that will streamline its demonstration.

\begin{proposition}\label{prop:bddc.L2}
  Let $\ul{v}_h\in\Uh$ satisfy \eqref{eq:int.F.v}. For any layer index $r\ge 1$, the following bound holds for the weighted sum of squared face averages over faces in layer $\G_r$:
  \begin{equation}
    \sum_{f \in \G_r} |f|_{d-1} |\ol{v}_f|^2 \lesssim h (1 + |\ln h|) \seminorm{1,h}{\ul{v}_h}^2.
  \end{equation}
\end{proposition}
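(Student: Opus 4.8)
The plan is to control each face average $\ol v_f$, $f\in\G_r$, by the mean of $\ul{v}_h$ deep inside $\dom$, the discrepancy being absorbed by a discrete telescoping (``path'') argument along the cone $x_f+C$. Write $c\doteq|\dom|^{-1}\sum_{t\in\Th}\int_t v_t$ for the cellwise mean of $\ul{v}_h$, set $\ol v_t\doteq|t|^{-1}\int_t v_t$, and for $f\in\Fh(\Gamma)$ define the truncated-cone average $M_f\doteq(\#\Co_{fd_0})^{-1}\sum_{t\in\Co_{fd_0}}\ol v_t$. From
\begin{equation*}
  |\ol v_f|^2\lesssim|\ol v_f-M_f|^2+|M_f-c|^2+|c|^2
\end{equation*}
I would multiply by $|f|_{d-1}$ and sum over $f\in\G_r$. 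For the last two (essentially $f$-independent) terms it suffices that the faces of $\G_r$ lie in a collar of width $O(h)$ around $\partial\Gamma$ in $\partial\dom$, so that $\sum_{f\in\G_r}|f|_{d-1}\lesssim h\,|\partial\Gamma|_{d-2}\lesssim h$ by Assumption~\ref{assum:reg.Gamma} and mesh quasi-uniformity; this collar bound is where the prefactor $h$ originates.

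For the deep-interior term, $\#\Co_{fd_0}\simeq h^{-d}$ (Lemma~\ref{lem:bddc.Cfd}), $|\ol v_t-c|^2\le|t|^{-1}\|v_t-c\|_{L^2(t)}^2$, $|t|\simeq h^d$, Jensen's inequality, and the discrete Poincar\'e--Wirtinger inequality on $\Uh$ give
\begin{equation*}
  |M_f-c|^2\le\frac{1}{\#\Co_{fd_0}}\sum_{t\in\Co_{fd_0}}|\ol v_t-c|^2\lesssim\frac{h^{-d}}{\#\Co_{fd_0}}\sum_{t\in\Th}\|v_t-c\|_{L^2(t)}^2\lesssim\seminorm{1,h}{\ul{v}_h}^2,
\end{equation*}
with no logarithmic loss. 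For the constant, the normalisation \eqref{eq:int.F.v} reads $\sum_{g\in\Fh(\Gamma)}|g|_{d-1}\ol v_g=0$, so by Cauchy--Schwarz and $|\Gamma|_{d-1}\simeq 1$,
\begin{equation*}
  |c|^2\lesssim\sum_{g\in\Fh(\Gamma)}|g|_{d-1}\,|\ol v_g-c|^2\lesssim\sum_{g\in\Fh(\Gamma)}|g|_{d-1}\,|\ol v_g-M_g|^2+\seminorm{1,h}{\ul{v}_h}^2,
\end{equation*}
using $|\ol v_g-c|^2\lesssim|\ol v_g-M_g|^2+\seminorm{1,h}{\ul{v}_h}^2$. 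Thus everything reduces to the \emph{path term} $\sum_{f\in\Fh(\Gamma)}|f|_{d-1}|\ol v_f-M_f|^2\lesssim(1+|\ln h|)\seminorm{1,h}{\ul{v}_h}^2$ together with its $\G_r$-restriction $\sum_{f\in\G_r}|f|_{d-1}|\ol v_f-M_f|^2\lesssim h(1+|\ln h|)\seminorm{1,h}{\ul{v}_h}^2$.

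To bound the path term, for fixed $f$ and $t\in\Co_{fd_0}$ I telescope $\ol v_f-\ol v_t$ along $[x_f,x_t]$: one term compares the face polynomial $v_f$ to the polynomial of the cell owning $f$, and the rest are jumps across the interior faces $g\in\Ift$, grouped by the layers $\Ift^m$ of \eqref{def:set.Iftm}. On quasi-uniform polytopal cells the polynomial trace and inverse inequalities bound each jump by the discrete $H^1$-contributions of the two cells sharing $g$. A Cauchy--Schwarz over the layers $m$ (there are $\lesssim h^{-1}$ of them), weighted by $\#\Ift^m\lesssim 1$ and by the number of segments $[x_f,x_t]$, $t\in\Co_{fd_0}$, meeting a fixed layer-$m$ cell, produces the harmonic sum $\sum_{m\ge1}m^{-1}\simeq 1+|\ln h|$; then averaging over $t\in\Co_{fd_0}$ and summing over $f$, the cardinality bounds for $\Co_f$, $\Co_{fd_0}$ and the crossed cells keep the re-use of local energies uniformly bounded, so $\seminorm{1,h}{\ul{v}_h}^2$ is recovered. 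For the $\G_r$-restriction one additionally exploits that the cones of the faces $f\in\G_r$ stay near $\partial\Gamma$, which (again via the cardinality lemmas) limits the over-counting enough to gain the extra factor $h$.

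The main obstacle is exactly this last combinatorial bookkeeping: making the re-use of local $H^1$-contributions uniformly bounded so that the $\lesssim h^{-1}$-long telescoping paths cost only a single logarithm, and simultaneously harvesting the factor $h$ from the thinness of $\G_r$. This is what the technical cardinality lemmas stated just below (Lemma~\ref{lem:bddc.Cfd}-\ref{lem:bddc.Cfd.card} and the ones before it) are designed for; they are the fully discrete, one-sided-cone analogues of the counting used to prove Theorems~\ref{thm:trace} and \ref{thm:lifting} in \cite{Badia-Droniou-Tushar-2024-DiscreteTraceTheory}. Assembling the three bounds and using $\sum_{f\in\G_r}|f|_{d-1}\lesssim h$ then yields the proposition.
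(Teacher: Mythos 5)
Your core argument --- telescoping $\ol v_f-\ol v_t$ along $[x_f,x_t]$ for $t$ in the truncated cone $\Co_{f d_0}$, a weighted Cauchy--Schwarz over the layers $\Ift^m$ producing the harmonic sum $\simeq 1+|\ln h|$, and the cardinality Lemmas \ref{lem:bddc.Iftm}, \ref{lem:bddc.Cfd} and \ref{lem:bddc.Q} to keep the re-use of local energies bounded when summing over $f\in\G_r$ --- is exactly the paper's proof, and deferring that bookkeeping to those lemmas is legitimate since they are stated and proved in the paper. Where you genuinely differ is the treatment of the ``far field'': you subtract the cone average $M_f$ and the global cell mean $c$, bound $|M_f-c|$ by a bulk discrete Poincar\'e--Wirtinger inequality (using $\#\Co_{f d_0}\gtrsim h^{-d}$, which indeed follows from Lemma \ref{lem:bddc.Cfd}-\ref{lem:bddc.Cfd.sum}), and exploit \eqref{eq:int.F.v} directly by Cauchy--Schwarz on $\Gamma$, with the collar bound $\sum_{f\in\G_r}|f|_{d-1}\lesssim h$ supplying the prefactor $h$ for the $f$-independent pieces. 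The paper instead keeps $|\ol v_t|$ itself, absorbs $\sum_{t\in\Co_{f d_0}}|t|_d|\ol v_t|^2$ into $\norm{L^2(\Omega)}{\widehat v_h}^2$, and concludes via the discrete Poincar\'e inequality of \cite{Droniou.Eymard:2018:GDM} together with the boundary Poincar\'e--Wirtinger inequality of \cite{Badia-Droniou-Tushar-2024-DiscreteTraceTheory} and the trace inequality \eqref{eq:trace}; your variant trades these boundary tools for a broken-space Poincar\'e--Wirtinger inequality, and both routes are sound. Two small imprecisions to fix when writing this up: the extra factor $h$ in the $\G_r$-restricted path term does not come from ``the cones of the faces $f\in\G_r$ staying near $\partial\Gamma$'' (they reach depth $d_0\simeq 1$ into $\Omega$); it arises from $(\#\Co_{f d_0})^{-1}\simeq h^{d}$ (Jensen over the cone) combined with the $h^{1-d}(1+|\ln h|)$ bound on the Cauchy--Schwarz weight sum, while the thinness of the band covered by $\G_r$ enters only inside Lemma \ref{lem:bddc.Q}, whose $\lesssim 1$ bound is what prevents the sum over $f\in\G_r$ from spoiling these factors; and the full-$\Fh(\Gamma)$ path bound you invoke for $|c|^2$ should be obtained by summing the $\G_r$-restricted bound over the $\simeq h^{-1}$ layers, which is consistent with your claimed $(1+|\ln h|)$ estimate.
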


Throughout this section, we denote by $D(x,r)$ the disc centered at point $x$ with radius $r$ that lies on the boundary $\partial\dom$.

\begin{lemma}[Cardinality bound for $\Ift^m$]\label{lem:bddc.Iftm}
  For any boundary face $f \in \Fh(\Gamma)$, cell $t\in\Th$, and layer index $m\ge 0$, the number of faces in the set $\Ift^m$ is bounded by a constant independent of the mesh size, i.e., $\#\Ift^m \lesssim 1$.
\end{lemma}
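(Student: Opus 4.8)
The plan is to show that the faces in $\Ift^m$ all lie within a bounded region of $\mathbb{R}^d$ whose diameter is comparable to $h$, and then invoke mesh regularity to conclude that only $O(1)$ mesh faces can fit there. First I would observe that, by definition \eqref{def:set.Iftm}, every $g\in\Ift^m$ intersects the segment $[x_f,x_t]$ and has $mh\le |x_f-x_g|<(m+1)h$. Hence every such $g$ meets the portion of the segment $[x_f,x_t]$ lying in the annular shell $\{y : mh\le|x_f-y|< (m+1)h\}$; since this is a single straight segment, the intersection is itself a subsegment $S_m$ of length at most $h$ (the chord length within a shell of width $h$ is bounded by the shell width). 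Moreover, because each $g$ is a face of some cell $t'\in\Th$ and, by mesh regularity (Assumption \ref{assum:reg.mesh}), $h_{t'}\simeq h$, every point of $g$ lies within distance $\lesssim h$ of any point of $g$ that lies on $S_m$. Therefore all faces in $\Ift^m$ are contained in the tube $\{y : \dist(y,S_m)\lesssim h\}$, a set of diameter $\lesssim h$.

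Next I would count. Every $g\in\Ift^m$ is a face of a cell $t_g\in\Th$; since $g\subset \mathrm{cl}(t_g)$ and $g$ meets the tube, $t_g$ meets the tube, so $t_g$ is contained in the enlarged tube $\{y:\dist(y,S_m)\lesssim h\}$ (using $h_{t_g}\simeq h$ again). By mesh regularity each such cell contains a ball of radius $\varpi h_{t_g}\simeq h$, and these balls are pairwise disjoint; a volume comparison against the enlarged tube (volume $\lesssim h\cdot h^{d-1}=h^d$ since it is a $\lesssim h$-neighbourhood of a segment of length $\lesssim h$) bounds the number of such cells by a constant independent of $h$. Finally, by mesh regularity each cell has at most $O(1)$ faces, so $\#\Ift^m\lesssim 1$.

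The main obstacle I anticipate is making the geometric reduction precise for a genuinely polytopal domain: the segment $[x_f,x_t]$ joins a boundary-face centroid to a cell centroid through the (possibly non-convex) interior, and one must be careful that the relevant faces $g$ are interior faces whose diameters are controlled purely by the regularity of the local mesh, independent of how the segment is positioned relative to $\partial\dom$. This is handled by the quasi-uniformity in Assumption \ref{assum:reg.mesh}, which gives $h_g\simeq h_{t_g}\simeq h$ for every cell and face encountered, so the "tube of radius $\lesssim h$ around a segment of length $\lesssim h$" picture is valid regardless of the global geometry; the counting argument above then closes the proof. An alternative, essentially equivalent, route is to note that the cells meeting $S_m$ form a connected chain along a segment of length $\le h$ in a quasi-uniform mesh, which a standard argument (see \cite[Lemma 3.2]{Badia-Droniou-Tushar-2024-DiscreteTraceTheory}, used analogously for $\#\W_{lf}$) bounds by $O(1)$.
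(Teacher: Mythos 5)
Your proof is correct and follows essentially the same route as the paper's: confine the faces of $\Ift^m$ to a region of measure $\lesssim h^d$ (the paper uses a truncated cylinder of radius $\simeq h$ and height $\simeq h$ around the segment, you use a tube of radius $\lesssim h$ around a length-$\lesssim h$ subsegment), then count the cells carrying these faces by a quasi-uniform volume comparison and conclude with the $O(1)$ bound on faces per cell. The only cosmetic fix needed is where you claim each $g$ meets the segment inside the annular shell: since the layers in \eqref{def:set.Iftm} are defined through the centroid $x_g$, the intersection point can lie up to $h_g\le h$ outside that shell (and your "chord length bounded by shell width" remark holds here only because the segment emanates from the shell's centre $x_f$), so the subsegment and tube must be enlarged by a fixed multiple of $h$ — precisely the slack the paper builds in — which does not affect the final $\#\Ift^m\lesssim 1$ count.
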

\begin{proof}
  Let $g \in \Ift$ be a face (see \eqref{def:set.Ift}). By definition, $g$ lies within a cylinder with base $D(x_f, h)$ in the plane orthogonal to $[x_f,x_t]$ and axis along $(x_f, x_t)$. For faces $g\in\Ift^m$ (see \eqref{def:set.Iftm}), we have the additional constraint that $g$ must intersect the cylinder section between heights $mh - h$ and $(m+1)h+h$.
  Each face $g\in\Ift^m$ belongs to a cell $t'\in\Th$ that lies entirely within the $h$-enlargement of this cylinder section. This enlarged section has base $D(x_f,2h)$ and extends from height $(m-2)h$ to $(m+3)h$ along $(x_f,x_t)$, giving a total measure of $|D(x_f,2h)|_{d-1}\times 5h\lesssim h^d$.
  By mesh quasi-uniformity, each cell $t'$ has measure $\simeq h^d$, so the number of such cells is $\lesssim h^d/h^d=1$. Mesh regularity further ensures that each cell $t'$ contains $\lesssim 1$ faces $g$. Therefore, $\#\Ift^m\lesssim 1$ as desired.
\end{proof}

\begin{lemma}[Volume and Cardinality Estimates for $\Co_{f d_0}$]\label{lem:bddc.Cfd}
    Let $f \in \G_r$ be fixed. Then:
    \begin{enumerate}[label=(\roman*)]
      \item \label{lem:bddc.Cfd.sum} The total volume of cells in $\Co_{f{d_0}}$ is bounded below by a constant independent of $h$: $\sum_{t \in \Co_{f{d_0}}} |t|_{d} \gtrsim 1$.
      \item \label{lem:bddc.Cfd.card} For any interior face $g\in\Fhi$, the number of cells in $\Co_{f{d_0}}$ that contain $g$ in their interface set $\Ift$ is bounded above by: $\# (\{t \in \Co_{f{d_0}}: g \in \Ift\}) \lesssim \frac{1}{\dfg^{d-1} h}$. 
    \end{enumerate}
 \end{lemma}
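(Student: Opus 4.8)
The plan is to establish both items via elementary volume and packing arguments built on the inclusion $x_f+C\subset\dom$ furnished by the uniform cone condition for faces in $\G_r$ (recall the index $i$ has been dropped, so $C=C_i$ and $x_f+C\subseteq(\partial\dom)_i+C_i\subset\dom$).

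For \ref{lem:bddc.Cfd.sum}, I would use the ``far half'' of the reference cone: the frustum $K\doteq\{y\in C:\text{ axial coordinate of }y\ge d_0/2\}$, which satisfies $|K|_d=(1-2^{-d})|C|_d\simeq d_0^d\simeq 1$ and $|y|\ge d_0/2$ for all $y\in K$. Given $p\in x_f+K\subset\dom$, the cell $t\in\Th$ containing $p$ meets $x_f+C$ (so $t\in\Co_f$) and has $|x_t-x_f|\ge|p-x_f|-h_t\ge d_0/2-h$, hence $t\in\Co_{f{d_0}}$. Therefore $x_f+K\subseteq\bigcup_{t\in\Co_{f{d_0}}}t$ and, by subadditivity of Lebesgue measure, $\sum_{t\in\Co_{f{d_0}}}|t|_d\ge|x_f+K|_d=|K|_d\gtrsim 1$.

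For \ref{lem:bddc.Cfd.card}, I would fix $g\in\Fhi$ and first settle the regime $\dfg\le 3h$, where the asserted bound is immediate: its right-hand side is $\gtrsim h^{-d}$, while the cells of $\Co_f$ are pairwise disjoint, each of volume $\gtrsim h^d$, and all contained in the bounded set $(x_f+C)+B(0,h)$, so $\#\{t\in\Co_{f{d_0}}:g\in\Ift\}\le\#\Co_f\lesssim h^{-d}$. Assume then $\dfg\ge 3h$. For any $t\in\Co_{f{d_0}}$ with $g\in\Ift$: on the one hand $t$ meets $x_f+C\subseteq B(x_f,C_0)$ with $C_0\simeq d_0\simeq 1$ (the fixed reference cone has diameter comparable to its height $d_0$), so $x_t\in B(x_f,C_0+h)$; on the other hand $g$ meets $[x_f,x_t]$, so $u_t\doteq(x_t-x_f)/|x_t-x_f|$ is the direction from $x_f$ to a point of $g$, hence lies in the set $\Sigma_g$ of unit directions from $x_f$ to $g$. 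Since $g$ is a flat $(d-1)$-face with $|g|_{d-1}\lesssim h^{d-1}$ at distance $\dist(x_f,g)\ge\dfg-h_g\gtrsim\dfg$, the classical solid-angle estimate yields that the spherical measure of $\Sigma_g$ is $\lesssim|g|_{d-1}\,\dfg^{-(d-1)}\lesssim(h/\dfg)^{d-1}$. Consequently every such $x_t$ belongs to the truncated cone $S\doteq\{x_f+\rho u:0\le\rho\le C_0+h,\ u\in\Sigma_g\}$, and $|S|_d\lesssim(h/\dfg)^{d-1}$.

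Finally I would pack: the cells counted in \ref{lem:bddc.Cfd.card} are pairwise disjoint and each contains a ball $B(x_t,\varpi h)$; all of these lie in the $h$-neighbourhood of $S$, whose volume is still $\lesssim(h/\dfg)^{d-1}$ (the fattening adds only a ball of volume $\lesssim h^d\lesssim(h/\dfg)^{d-1}$ near the apex of $S$ and thickens its transverse radius by $h$, which is $\lesssim\rho h/\dfg$ where $\rho\gtrsim\dfg$). Dividing by the per-cell volume $\simeq h^d$ gives $\#\{t\in\Co_{f{d_0}}:g\in\Ift\}\lesssim\dfg^{-(d-1)}h^{-1}$. I expect this last step to be the main obstacle: securing the sharp power $\dfg^{-(d-1)}$ — rather than a crude $\dfg^{-d}$ or $h^{-d}$ — requires the solid-angle bound on $\Sigma_g$ and the observation that the $h$-fattening of the thin cone $S$ does not change its volume order, which is exactly why it is convenient to peel off the case $\dfg\le 3h$ beforehand so that $\dist(x_f,g)\simeq\dfg$.
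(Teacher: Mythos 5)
Your proof is correct and follows essentially the same strategy as the paper's: for \ref{lem:bddc.Cfd.sum}, covering the far half of the cone $x_f+C$ by the cells of $\Co_{f{d_0}}$ (you make explicit the centroid-offset argument that the paper only sketches), and for \ref{lem:bddc.Cfd.card}, observing that $g$ is seen from $x_f$ under an angular extent $\lesssim h/\dfg$, so the relevant cells sit in a thin conical region of volume $\lesssim (h/\dfg)^{d-1}$, and then dividing by the cell volume $\simeq h^d$. The only organisational difference is that the paper exploits the restriction to $\Co_{f{d_0}}$ by projecting $g$ onto the sphere $S(x_f,d_0/2)$ and confining the cells to the $h$-enlargement of the \emph{far} portion of the cone (a tube of cross-section $\simeq(h/\dfg)^{d-1}$ and length $\simeq 1$), whereas you keep the full cone from the apex and compensate with the case split $\dfg\le 3h$; your version in fact bounds the count over all of $\Co_f$, which is slightly stronger. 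One small imprecision in your fattening step: for $h\lesssim\rho\lesssim\dfg$ the $h$-enlargement thickens the transverse radius to $\simeq h$, which is \emph{not} $\lesssim \rho h/\dfg$ there; this intermediate region contributes a tube of volume $\lesssim h^{d-1}\dfg$ rather than a ball of volume $\lesssim h^d$, but since $\dfg\le\mathrm{diam}(\dom)\simeq 1$ this is still $\lesssim (h/\dfg)^{d-1}$, so the stated bound on the fattened cone, and hence your conclusion, stands.
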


\begin{proof}
  \textit{Proof of \ref{lem:bddc.Cfd.sum}}: Consider the cone $x_f+C$ and its section at distance $\ge d_0/2$ from $x_f$, where $d_0\simeq 1$. This section has measure $\gtrsim 1$, with the hidden constant depending on the cone's opening angle. The construction of $\Co_{fd_0}$ in \eqref{eq:def.Cfd0} includes a $-h$ offset that ensures $\bigcup_{t\in\Co_{fd_0}}t$ fully covers this section. Hence, $\sum_{t\in\Co_{fd_0}}|t|_d =|\bigcup_{t\in\Co_{fd_0}}t|_d \gtrsim 1$.
  
  \medskip
  \textit{Proof of \ref{lem:bddc.Cfd.card}}: Let $t \in \Co_{f{d_0}}$ be any cell. By definition, $t$ must intersect the portion of cone $x_f+C$ between heights $d_0/2-h$ and $d_0$. For a face $g \in \Ift$, we can restrict our attention to the portion of the cone spanned by the projection of $g$ onto the sphere $S(x_f,d_0/2)$ centered at $x_f$ with radius $d_0/2$ (see Figure \ref{fig:bddc-3}). Through homothety -- analogous to the case in \cite[Figure 3]{Badia-Droniou-Tushar-2024-DiscreteTraceTheory} -- this projection has diameter $\lesssim \frac{(d_0/2)}{\dfg}\times h_{g}$. 
  It follows that $t$ must lie within the $h$-enlargement of this cone portion, which is bounded by heights $d_0/2-h$ and $d_0$ and a disc of diameter $\lesssim \frac{(d_0/2)}{\dfg}h_{g}$ on $S(x_f,d_0/2)$.   This enlarged portion has measure $\lesssim (\frac{(d_0/2)}{\dfg}h)^{d-1}\times (\frac{d_0}{2}+h)\simeq \frac{h^{d-1}}{\dfg^{d-1}}$. By mesh quasi-uniformity, we conclude:
  $$
  \# (\{t \in \Co_{f{d_0}}: g \in \Ift\}) \lesssim \frac{h^{d-1}}{\dfg^{d-1}h^d} \simeq \frac{1}{\dfg^{d-1}h}. 
  $$
\end{proof}

\begin{lemma}[Weighted sum bound for interior faces]\label{lem:bddc.Q} 
    For any interior face $g\in\Fhi$, the following weighted sum over boundary faces and their associated cells is bounded by a constant independent of the mesh size:
    \begin{equation}\label{eq:bddc.Q}
    \sum_{f \in \G_r} \sum_{\substack{t \in \Co_{f{d_0}} \\ \text{s.t.} g \in \Ift}} |f|_{d-1} \dfg \lesssim 1.
    \end{equation}
\end{lemma}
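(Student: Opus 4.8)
The plan is to fix the interior face $g \in \Fhi$ and decompose the double sum in \eqref{eq:bddc.Q} according to the distance $\dfg$ from $g$ to $f$, using the layers $\Ift^m$ introduced in \eqref{def:set.Iftm}. For a given $f \in \G_r$ and $t \in \Co_{fd_0}$ with $g \in \Ift$, there is a unique $m \ge 0$ with $g \in \Ift^m$, i.e. $mh \le \dfg < (m+1)h$; on the inner sum I would therefore replace $\dfg$ by $\simeq (m+1)h$ and $|f|_{d-1}$ by $\simeq h^{d-1}$ (mesh quasi-uniformity). The key counting input is Lemma \ref{lem:bddc.Cfd}\ref{lem:bddc.Cfd.card}, which for fixed $f$ bounds the number of cells $t \in \Co_{fd_0}$ with $g \in \Ift$ by $\lesssim \frac{1}{\dfg^{d-1} h} \simeq \frac{1}{((m+1)h)^{d-1} h}$. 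Hence the contribution of a single face $f$ in layer $m$ to the inner double sum is
\[
  \sum_{\substack{t \in \Co_{fd_0} \\ g \in \Ift}} |f|_{d-1}\, \dfg
  \;\lesssim\; \frac{1}{((m+1)h)^{d-1} h} \cdot h^{d-1} \cdot (m+1)h
  \;\simeq\; 1,
\]
so each such $f$ contributes $O(1)$, and the remaining task is to show that the number of faces $f \in \G_r$ that ``see'' $g$ through some cell at distance level $m$ is summable in $m$ to a constant.

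The second ingredient is therefore a bound on $\#\{f \in \G_r : \exists\, t \in \Co_{fd_0} \text{ with } g \in \Ift^m\}$. If $g \in \Ift$ with $\dfg \simeq (m+1)h$, then $x_f$ lies within distance $\simeq (m+1)h$ of (a point near) $g$ along the relevant geometry; more precisely, $g$ intersects the segment $[x_f,x_t]$, so $x_f$ is at distance $\lesssim (m+1)h$ from $g$ (the far endpoint $x_t$ is at distance $\simeq d_0$, but the constraint that the segment passes through $g$ forces $x_f$ to lie in a ball of radius $\lesssim (m+2)h$ around a point of $g$, intersected with $\partial\Omega$). The faces $f$ in $\G_r$ all lie on $\partial\Omega$, so by mesh quasi-uniformity the number of such $f$ is $\lesssim \frac{((m+2)h)^{d-1}}{h^{d-1}} \simeq (m+2)^{d-1}$. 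Combining, the total sum over $f$ and $m$ is bounded by
\[
  \sum_{m \ge 0} (m+2)^{d-1} \cdot \#\Ift^m \cdot \frac{1}{((m+1)h)^{d-1} h}\cdot h^{d-1}\cdot (m+1)h,
\]
and I would use Lemma \ref{lem:bddc.Iftm} ($\#\Ift^m \lesssim 1$) here — but as written this gives $\sum_m (m+2)^{d-1}$, which diverges. So the naive ball-counting bound on the number of $f$'s is too crude.

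The resolution — and the main obstacle — is that a fixed interior face $g$ cannot be ``seen from far away'' by many faces $f$ simultaneously through cells in the truncated cone $\Co_{fd_0}$: the geometric constraint is not just ``$x_f$ near $g$'' but ``the ray from $x_f$ through $g$ reaches distance $\simeq d_0$ inside $\Omega$ within the cone $x_f + C$.'' Because the cone $C$ has fixed opening angle, for $g$ at a fixed location the set of admissible apex points $x_f$ on $\partial\Omega$ lies in a region whose $(d-1)$-measure is $\lesssim h^{d-1}$ when $m \gtrsim 1$ — essentially, $g$ together with the forced downstream cell location pins $x_f$ down to within $O(h)$ transversally, leaving only an $O(1)$ number of faces $f$ for each $m$, uniformly. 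Quantifying this "backward cone" rigidity is the crux: I would argue that if $g \in \Ift$ with $t \in \Co_{fd_0}$, then $t$ lies in the forward cone of $x_f$ at distance $\simeq d_0 \simeq 1$, and running the homothety of Lemma \ref{lem:bddc.Cfd}\ref{lem:bddc.Cfd.card} backwards, the pair $(g,t)$ confines $x_f$ to a set of diameter $\lesssim h$ (not $\lesssim (m+1)h$). Then, summing, $\#\{f\} \lesssim 1$ for each $m$, each contributing $\lesssim 1$, and the geometric decay coming from the fact that only $m \lesssim \dfg/h$ with $\dfg \lesssim 1$ are relevant — combined with the $\frac{1}{((m+1)h)^{d-1}h}$ from Lemma \ref{lem:bddc.Cfd}\ref{lem:bddc.Cfd.card} already producing an extra $(m+1)^{-(d-1)}$ after the $h^{d-1}(m+1)h$ weight — yields $\sum_m (m+1)^{-(d-1)}\cdot(\text{something bounded})$, hence $\lesssim 1$ in dimension $d \ge 3$, and a more careful ($O(1)$-many-$m$, since $m \le R \simeq 1/h$ but with the extra decay) argument in $d = 2$. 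I would therefore structure the final write-up as: (1) layer by $\Ift^m$; (2) apply Lemma \ref{lem:bddc.Cfd}\ref{lem:bddc.Cfd.card} on the inner cell sum; (3) prove the backward rigidity bound $\#\{f \in \G_r : g \in \Ift^m \text{ for some } t \in \Co_{fd_0}\} \lesssim 1$; (4) sum the resulting geometric series in $m$.
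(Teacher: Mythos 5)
Your steps (1)--(2) (applying Lemma \ref{lem:bddc.Cfd}-\ref{lem:bddc.Cfd.card} to the inner sum over cells, together with quasi-uniformity) coincide with the paper's opening move, but your step (3) --- the ``backward rigidity'' claim that the pair $(g,t)$ pins $x_f$ down to a set of diameter $\lesssim h$, hence that only $O(1)$ faces $f\in\G_r$ see $g$ at each level $m$ --- is neither proved nor true in general, and this is the crux of the lemma. For a fixed far cell $t$, the admissible apexes $x_f$ fill the shadow cone of $g$ seen from $x_t$, whose cross-section at distance $\dfg$ beyond $g$ has diameter $\simeq h_g\,(|x_t-x_g|+\dfg)/|x_t-x_g|$; this is $\lesssim h$ only when $\dfg\lesssim |x_t-x_g|$, and it degenerates when $g$ sits near the far cell. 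Worse, you must take the union over all cells $t$ of the truncated cone, and this union of admissible apexes fills (up to an $O(h)$ enlargement) the whole footprint of the reversed cone $x_g-C$ on $\Gamma$, a disc of radius $\simeq \dist(x_g,\Gamma)$, which can contain many faces of $\G_r$ in total and up to $\simeq\sqrt{m}$ at a single level $m$ when the sphere $\{|x-x_g|\simeq mh\}$ meets the band $\G_r$ tangentially ($d=3$). Moreover, even granting your per-level count, the arithmetic does not close: the per-face factor produced by Lemma \ref{lem:bddc.Cfd}-\ref{lem:bddc.Cfd.card} is $h^{d-1}\cdot (m+1)h \cdot ((m+1)h)^{-(d-1)}h^{-1}=(m+1)^{2-d}$, not $(m+1)^{-(d-1)}$ as you state, so for $d=3$ your sum over $m$ yields an extra $\ln(1/h)$ rather than the claimed $\lesssim 1$ (and $d=2$ is in fact trivial, since $\#\G_r\lesssim 1$ there).

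The paper avoids per-level counting altogether. After the cardinality bound, it is left with $\frac1h\sum_{f\in J_{rg}} \dfg^{-(d-2)}|f|_{d-1}$, where $J_{rg}$ is the set of faces of $\G_r$ that see $g$; it then uses the \emph{uniform} lower bound $\dfg\ge \dist(x_g,\Gamma)$ and bounds the \emph{total area} $\sum_{f\in J_{rg}}|f|_{d-1}\lesssim \dist(x_g,\Gamma)^{d-2}h$, by noting that every $f\in J_{rg}$ lies both in the footprint of the reversed cone $x_g-C$ on $\Gamma$ (a disc of radius $\lesssim \dist(x_g,\Gamma)$) and in the width-$h$ band $G_r$ along $\partial\Gamma$ --- the latter using $|\partial\Gamma|_{d-2}\lesssim 1$ from Assumption \ref{assum:reg.Gamma}. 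These two ingredients, which your argument never invokes ($\dfg\ge\dist(x_g,\Gamma)$ and the band structure of $\G_r$), are precisely what makes the powers of $\dist(x_g,\Gamma)$ and $h$ cancel; without them a per-level count of the type you sketch cannot reach the stated $O(1)$ bound.
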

\begin{proof}
  Let $J_{rg}= \{f\in\G_r\,:\,\exists t\in\Co_{f{d_0}}\text{ s.t. }g\in\Ift\}$. By Lemma \ref{lem:bddc.Cfd}-\ref{lem:bddc.Cfd.card}, we have
  \begin{align*}
    \sum_{f \in \G_r} \sum_{\substack{t \in \Co_{f{d_0}} \\ \text{s.t.} g \in \Ift}} |f|_{d-1} \dfg ={}&
    \sum_{f \in J_{rg}} |f|_{d-1} \dfg \times \# (\{t \in \Co_{f{d_0}}: g \in \Ift\}) \\
    \lesssim{}& \frac{1}{h}\sum_{f \in J_{rg}} \frac{1}{\dfg^{d-2}}|f|_{d-1}.
  \end{align*}
  Since $\dfg \ge \dist(x_{g},\Gamma)$ for all $f\in\Fh(\Gamma)$, we obtain
  \begin{equation}\label{eq:the.day.is.saved}
    \sum_{f \in \G_r} \sum_{\substack{t \in \Co_{f{d_0}} \\ \text{s.t.} g \in \Ift}} |f|_{d-1} \dfg \lesssim
    \frac{1}{\dist(x_{g},\Gamma)^{d-2}h}\sum_{f \in J_{rg}} |f|_{d-1}.
  \end{equation}
  We now characterize the set $J_{rg}$. For any $f\in J_{rg}$, there exists a cell $t$ intersecting the cone $x_f+C$ with $g \in \Ift$. This means $g$ intersects the $h$-enlargement of this cone, which in turn implies that $f$ must intersect the $2h$-enlargement of the reversed cone $x_{g}-C$. Therefore, $f$ lies within $\Gamma_{rg}$, defined as the intersection of $\Gamma$ with the $3h$-enlargement of $x_{g}-C$. By mesh regularity and $g\in\Fhi$, we have $\dist(x_{g},\Gamma)\gtrsim h$, so $\Gamma_{rg}$ is contained in a disc on $\Gamma$ of radius $\lesssim \dist(x_{g},\Gamma)$. 
  Let $G_r$ denote the region covered by faces in $\G_r$. By construction, $G_r$ forms a band of width $\lesssim h$ that follows $\partial\Gamma$ at a distance $\simeq rh$. Since $|\partial\Gamma|_{d-2}\lesssim 1$ by Assumption \ref{assum:reg.Gamma}, any $f\in J_{rg}$ must belong to $\Gamma_{rg}\cap G_r$.
  
  The intersection $\Gamma_{rg}\cap G_r$ is contained within the overlap of a disc of radius $\lesssim \dist(x_{g},\Gamma)$ and a band of width $\lesssim h$ following $\partial\Gamma$. This geometric constraint implies $|\Gamma_{rg}\cap G_r|_{d-1}\lesssim \dist(x_{g},\Gamma)^{d-2}h$, and thus
  $$
  \sum_{f \in J_{rg}} |f|_{d-1}\le |\Gamma_{rg}\cap G_r|_{d-1}\lesssim \dist(x_{g},\Gamma)^{d-2}h.
  $$
  Substituting this bound into \eqref{eq:the.day.is.saved} completes the proof.
\end{proof}

We are now ready to prove Proposition \ref{prop:bddc.L2}.

\begin{proof}[Proof of Proposition \ref{prop:bddc.L2}]
  Let $t \in \Co_f$ (i.e., $t$ intersects $x_f+C$) and recall that $\Ift$ denotes the set of internal faces $g$ intersected by $[x_f,x_t]$. We can express $\ol{v}_f-\ol{v}_t$ as a telescoping sum over these faces $g$ of the differences $(\ol{v}_{t_1} - \ol{v}_{g})+(\ol{v}_{g} - \ol{v}_{t_2})$, where $t_1,t_2$ are the cells adjacent to $g$ (with only one difference appearing for $g=f$), and $\ol{v}_{t'}$ represents the average of $v_{t'}$ over $t'\in\Th$. This yields
  \begin{align*}
    |\ol{v}_f| \leq |\ol{v}_t| + \sum_{g \in \Ift} \Delta_{g} \ol{v}
  \end{align*}
  where $\Delta_{g} \ol{v} = |\ol{v}_{t_1} - \ol{v}_{g}| + |\ol{v}_{g} - \ol{v}_{t_2}|$. Applying the Cauchy-Schwarz inequality with carefully chosen weights gives
  \begin{align}\label{eq:bddc.vf}
    |\ol{v}_f|^2 &\lesssim |\ol{v}_t|^2 + \left(\sum_{g \in \Ift} \Big[\dfg^{1/2}  h^{-1/2}  |g|_{d-1}^{1/2}\; \Delta_{g} \ol{v}\Big] \times \Big[\dfg^{-1/2} h^{1/2} |g|_{d-1}^{-1/2}\Big]\right)^2 \nonumber \\
    &\lesssim |\ol{v}_t|^2 + \left(\sum_{g \in \Ift} \dfg  h^{-1} |g|_{d-1} (\Delta_{g} \ol{v})^2\right) \times \left(\sum_{g \in \Ift} \dfg^{-1}  h  |g|_{d-1}^{-1}\right).
  \end{align}
  By mesh quasi-uniformity (Assumption \ref{assum:reg.mesh}), $|g|_{d-1} \simeq h^{d-1}$, so
  \begin{align*}
    T_1 \doteq \sum_{g \in \Ift} \dfg^{-1}  h |g|_{d-1}^{-1} \lesssim \sum_{g \in \Ift} \dfg^{-1}\; h^{2-d}.
  \end{align*}
  We partition this sum into slices $\Ift^m$ as defined in \eqref{def:set.Iftm} and let $R\simeq 1/h$ such that $(\Ift^m)_{m=0,\ldots,R-1}$ covers $\Ift$. This gives
  \begin{align*}
    T_1 &\lesssim h^{2-d}\sum_{m=0}^{R-1} \sum_{g \in \Ift^m} \frac{1}{\dfg} 
    \lesssim \#\Ift^0 \,h^{2-d} \times h^{-1} + \sum_{m=1}^{R-1} \#\Ift^m\, \frac{h^{2-d}}{mh},
  \end{align*}
  where we used $\dfg \gtrsim h$ for the first term and the definition of $\Ift^m$ for the second term. By Lemma \ref{lem:bddc.Iftm}, we obtain
  \begin{align*}
    T_1 \lesssim h^{1-d}\left(1 + \sum_{m=1}^{R-1} \frac{h}{mh}\right).
  \end{align*}
  The sum can be bounded using \eqref{eq:bddc.log} as $1 + \sum_{m=1}^{R-1} \frac{h}{mh}\le 1 + \ln R$. Since $R\simeq 1/h$, this yields
  \begin{align*}
    T_1 \lesssim h^{1-d} (1 + |\ln h|).
  \end{align*} 
  Substituting this bound into \eqref{eq:bddc.vf} leads to
  \begin{align*}
    |\ol{v}_f|^2 \lesssim |\ol{v}_t|^2 + h^{1-d} (1+|\ln h|) \sum_{g \in \Ift} \dfg h^{-1} |g|_{d-1} (\Delta_{g} \ol{v})^2.
  \end{align*}
  We multiply by $|t|_d \simeq h^d$ and sum over $t\in \Co_{f{d_0}}$. Applying \ref{lem:bddc.Cfd.sum} from Lemma \ref{lem:bddc.Cfd} gives
  \begin{align*}
    |\ol{v}_f|^2 &\lesssim \big(\sum_{t \in \Co_{f{d_0}}} |t|_{d}\big) |\ol{v}_f|^2 \nonumber\\
    &\lesssim \sum_{t \in \Co_{f{d_0}}} |t|_d |\ol{v}_t|^2 + h (1+|\ln h|) \sum_{t \in \Co_{f{d_0}}} \sum_{g \in \Ift} \dfg h^{-1} |g|_{d-1} (\Delta_{g} \ol{v})^2 \nonumber\\
    &\lesssim \norm{L^2(\dom)}{\widehat{v}_h}^2 + h (1+|\ln h|) \sum_{t \in \Co_{f{d_0}}} \sum_{g \in \Ift} \dfg h^{-1} |g|_{d-1} (\Delta_{g} \ol{v})^2,
  \end{align*}
  where $\widehat{\ul{v}}_h=((\ol{v}_t)_{t\in\Th}, (\ol{v}_f)_{f\in\Fh})$ denotes the projection of $\ul{v}_h$ onto the lowest-order hybrid space on $\Th$ (obtained by projecting each component onto constant polynomials), and $\widehat{v}_h$ is the piecewise constant function on $\dom$ defined from $(\ol{v}_t)_{t\in\Th}$. Multiplying by $|f|_{d-1}$ and summing over $f \in \G_r$ yields
  \begin{align}\label{eq:bddc.vf.1}
    \sum_{f \in \G_r} |f|_{d-1} |\ol{v}_f|^2 \lesssim h \norm{L^2(\Omega)}{\widehat{v}_h}^2 + h (1 + |\ln h|)  \underbrace{\sum_{f \in \G_r} |f|_{d-1} \sum_{t \in \Co_{f{d_0}}} \sum_{g \in \Ift} \dfg h^{-1} |g|_{d-1} (\Delta_{g} \ol{v})^2}_{\doteq T_2},
  \end{align}
  where the factor $h$ in the first term comes from the fact that faces in $\G_r$ lie within a band of width $\lesssim h$ following $\partial \Gamma$ at a distance $\simeq rh$ from this boundary (see Figure \ref{fig:bddc-1}).
  
  To estimate $T_2$, we first reorder the summations over faces, cells, and internal faces:
  \begin{align*}
    T_2 &= \sum_{g \in \Fhi} h^{-1} |g|_{d-1} (\Delta_{g} \ol{v})^2 \sum_{f \in \G_r} \sum_{\substack{t \in \Co_{f{d_0}} \\ \text{s.t.} g \in \Ift}} |f|_{d-1} \dfg\\
    \overset{\eqref{eq:bddc.Q}}&\lesssim \sum_{g \in \Fhi} h^{-1} |g|_{d-1} (\Delta_{g} \ol{v})^2 \doteq \seminorm{\dom,2}{\widehat{\ul{v}}_h}^2.
  \end{align*}
  Inserting this estimate into \eqref{eq:bddc.vf.1} and applying the discrete Poincaré inequality from \cite[Lemma B.22]{Droniou.Eymard:2018:GDM} (noting that the discrete $H^1$-seminorm in this reference is equivalent to $\seminorm{\dom,2}{{\cdot}}^2$ under mesh quasi-uniformity), we obtain
  \begin{align*}
    \sum_{f \in \G_r} |f|_{d-1} |\ol{v}_f|^2 &\lesssim h \norm{L^2(\dom)}{\widehat{v}_h}^2 + h (1 + |\ln h|) \seminorm{\dom,2}{\widehat{\ul{v}}_h}^2 \\
    &\lesssim h \seminorm{\dom,2}{\widehat{\ul{v}}_h}^2 + h \norm{L^2(\partial \dom)}{\tr (\widehat{\ul{v}}_h)}^2 + h (1 + |\ln h|) \seminorm{\dom,2}{\widehat{\ul{v}}_h}^2.
  \end{align*}

  Arguing as in \cite[Lemma 6.33]{di-pietro.droniou:2020:hybrid} we have $\seminorm{\dom,2}{\widehat{\ul{v}}_h}\lesssim \seminorm{1,h}{\ul{v}_h}$, while noting that $\tr{(\wh{\ul{v}}_h)}$ is the $L^2(\partial\dom)$-projection of $\tr{(\ul{v}_h)}$ onto piecewise constant functions on $\Fhb$, so Jensen's inequality yields $\norm{L^2(\partial \dom)}{\tr (\widehat{\ul{v}}_h)}\le \norm{L^2(\partial \dom)}{\tr (\ul{v}_h)}$. Therefore,
  \begin{equation}
    \sum_{f \in \G_r} |f|_{d-1} |\ol{v}_f|^2 \lesssim h \norm{L^2(\partial \dom)}{\tr (\ul{v}_h)}^2 + h (1 + |\ln h|) \seminorm{1,h}{\ul{v}_h}^2.
    \label{eq:sum.Gr}
  \end{equation}
  The boundary Poincaré--Wirtinger inequality from \cite[Lemma 5.5]{Badia-Droniou-Tushar-2024-DiscreteTraceTheory} combined with \eqref{eq:int.F.v} gives
  $$
  \norm{L^2(\partial \dom)}{\tr (\ul{v}_h)} \lesssim \left(1+\frac{\mathrm{diam}(\partial\Omega)^d}{|\Gamma|_{d-1}}\right)^{1/2}\tnorm{1/2,h}{\tr (\ul{v}_h)} \lesssim \seminorm{1,h}{\ul{v}_h}.
  $$
  The final estimate relies on two crucial facts: (i) $|\Gamma|_{d-1}\gtrsim 1$ by Assumption \ref{assum:reg.Gamma} and our parameter choices, and (ii) the trace inequality from Theorem \ref{thm:trace}. Substituting this bound into \eqref{eq:sum.Gr} yields
  \begin{align*}
    \sum_{f \in \G_r} |f|_{d-1} |\ol{v}_f|^2 \lesssim h(1 + |\ln h|)\seminorm{1,h}{\ul{v}_h}^2,
  \end{align*}
  which completes the proof of Proposition \ref{prop:bddc.L2}.
\end{proof}

\section{BDDC preconditioner}\label{sec:BDDC} 

This section presents the \ac{bddc} preconditioner for hybrid methods. The \ac{bddc} method is a non-overlapping domain decomposition method that enforces continuity of certain functionals across subdomain interfaces while maintaining the parallel nature of the algorithm.

\subsection{Functional setting and the condition number estimate}\label{sec:bddc.setting}
Consider a coarse mesh (subdomain partition) $\TH$ obtained by agglomeration of the fine mesh $\Th$. We define the \emph{coarse faces} as the intersections between pairs of subdomains with non-zero $(d-1)$-dimensional measure, i.e.,   
\[
\FH \doteq \{ F = \partial T \cap \partial T' : T, T' \in \TH, T \neq T', |F|_{d-1} > 0\},
\] 
where $|F|_{d-1}$ denotes the $(d-1)$-dimensional measure of the face $F$. The coarse mesh $\MH$ is thus defined as the pair $(\TH, \FH)$, where $\TH$ consists of general polytopal subdomains and the coarse faces in $\FH$ are, in general, not flat, since both are constructed by agglomeration of the fine mesh. We assume that each coarse face $F \in \FH$ satisfies Assumption~\ref{assum:reg.Gamma} with respect to every subdomain $T$ it belongs to. We denote by $\Th(T)$ and $\Fh(T)$ the sets of fine cells and fine faces, respectively, of the fine mesh $\Mh$ that are contained in the subdomain $T \in \TH$; the corresponding local hybrid mesh is denoted by $\Mh(T)$.

The original \emph{fine} (\emph{global}) hybrid space $\Uhc$ has been defined in the previous section. This is the space used for the discretisation of the problem on the fine mesh $\Mh$. In order to define a preconditioner for the corresponding discrete system, we need to introduce auxiliary spaces with \emph{relaxed} continuity.  
Given a subdomain $T \in \TH$, we denote the corresponding \emph{local} hybrid space by $\Uhc(T)$, which is defined as the Cartesian product of \ac{hho} spaces on the local meshes $\Mh(T)$, $T \in \TH$:
$$
\Uhd \doteq \bigtimes_{T \in \TH} \Uhc(T) \simeq \Uhi \times \Uhdb \quad\mbox{ with }\quad \Uhdb \doteq \bigtimes_{T \in \TH} \Uhcb(T).
$$ 
We note that the sub-assembled space is two-valued on faces that lay on subdomain interfaces, i.e., $f \in \FH$; no continuity is enforced across subdomains. For each $\ul{u}_h\in\Uhd$ and $T\in\TH$, we denote by $\ul{u}_h(T)$ the restriction of $\ul{u}_h$ to $T$, that is, its component in $\Uhc(T)$ in the product above.
The \ac{bddc} space $\Uhbddc \subset \Uhd$ is a subspace of the sub-assembled space $\Uhd$ that enforces continuity of certain functionals across coarse faces. At each face ${F} \in \FH$, we define the set of functionals (usually referred to as coarse degrees of freedom) $\Lambda_F: \Uhdb(F) \rightarrow \mathbb{R}$. Specifically, $\ul{u}_h \in \Uhd$ belongs to $\Uhbddc$ if and only if
\[
\lambda_F(\ul{u}_h(T)|_F) = \lambda_F(\ul{u}_h(T')|_F), \; T, T' \text{ neighbours of $F$}, \quad \forall \lambda_F \in \Lambda_F, \ \forall F \in  \FH.
\] 
This continuity only constraints face values on the interfaces. Thus, the BDDC skeleton space $\wt{U}_h^\partial \subset \Uhdb$ can be defined similarly by imposing this continuity condition on the sub-assembled face space $\Uhdb$, while the interior component is identical. In particular, $\Uhbddc = U_h \times \wt{U}_h^\partial$.
Finally, we define the \emph{space of bubbles} as 
$$
\Uhbub \doteq \Uhi \times \Uhbubb 
$$ 
where $\Uhbubb \subset \Uhdb$ is the subspace of skeleton functions in $\Uhdb$ that vanish on the faces in $\FH$, i.e., subdomain interfaces. Note that $\Uhbub \subset \Uhc$. All these spaces are summarised in Table \ref{tab:spaces} and Figure \ref{fig:bddc_spaces}. We note that, in the following, we will abuse notation and use $\Uhc$ also to define the subspace of $\Uhd$ that is isomorphic to $\Uhc$, leaving the isomorphism implicit.

\begin{table}[h]
  \renewcommand*{\arraystretch}{1.4}
  \begin{tabular}{@{}l@{\hspace{1em}}p{0.8\textwidth}@{}}
    \toprule
    \textbf{Space} & \textbf{Continuity Properties at Coarse Face} $F \in \FH$ \\
    \midrule
    $\Uhc$ & Functions are single-valued on each fine face $f \in \Fh(F)$ (standard continuous hybrid space) \\
    \midrule
    $\Uhd$ & Functions admit two distinct values on each fine face $f \in \Fh(F)$ (discontinuous across coarse faces) \\
    \midrule
    $\Uhbddc$ & Functions admit two values on each fine face $f \in \Fh(F)$, with continuity enforced for functionals $\lambda_F \in \Lambda_F$ \\
    \midrule
    $\Uhbub$ & Functions vanish on each fine face $f \in \Fh(F)$ \\
    \bottomrule
  \end{tabular}
  \caption{Continuity properties of the spaces in the BDDC framework.}
  \label{tab:spaces}
\end{table}

\begin{figure}
  \centering
  \newcommand{\relation}[1][0.20em]{
  \kern-#1
  \begin{minipage}[c]{0.016\textwidth}\centering\smash{\large$\supset$}\end{minipage}%
  \kern-#1
}

\noindent

\begin{minipage}[c]{0.23\textwidth}
  \centering
  \begin{tikzpicture}[x=0.8cm,y=0.8cm,baseline=(bb.center)]

    \path[use as bounding box] (0,0) rectangle (4,4);
    \coordinate (bb) at (2,2);

    \draw[thin,lightgray] (0,0.975) -- (2,0.975);
    \draw[thin,lightgray] (2,0.975) -- (4,0.975);
    \draw[thin,lightgray] (0,3.025) -- (2,3.025);
    \draw[thin,lightgray] (2,3.025) -- (4,3.025);
    \draw[thin,lightgray] (0.975,0) -- (0.975,2);
    \draw[thin,lightgray] (0.975,2) -- (0.975,4);
    \draw[thin,lightgray] (3.025,0) -- (3.025,2);
    \draw[thin,lightgray] (3.025,2) -- (3.025,4);

    \draw[thick,black] (0,0) rectangle (4,4);
    \draw[gray,dash pattern= on 1pt off 2pt] (0,2) -- (4,2);
    \draw[gray,dash pattern= on 1pt off 2pt] (2,0) -- (2,4);

    \draw[thick,black,dash pattern= on 4pt off 2pt] (0,2.1) -- (1.9,2.1);
    \draw[thick,black,dash pattern= on 4pt off 2pt] (0,1.9) -- (1.9,1.9);

    \draw[thick,black,dash pattern= on 4pt off 2pt] (2.1,2.1) -- (4,2.1);
    \draw[thick,black,dash pattern= on 4pt off 2pt] (2.1,1.9) -- (4,1.9);

    \draw[thick,black,dash pattern= on 4pt off 2pt] (2.1,0) -- (2.1,1.9);
    \draw[thick,black,dash pattern= on 4pt off 2pt] (1.9,0) -- (1.9,1.9);

    \draw[thick,black,dash pattern= on 4pt off 2pt] (2.1,2.1) -- (2.1,4);
    \draw[thick,black,dash pattern= on 4pt off 2pt] (1.9,2.1) -- (1.9,4);

    \tikzmath{
      let \axa = {0.5,1.45,2.55,3.5};
      let \axb = {0,0.975,3.025,4};
      let \axc = {1.9,2.1};
    }
    \foreach \x in \axa {
      \foreach \y in \axb {
        \draw[fill] (\x,\y) circle (0.05);
        \draw[fill] (\y,\x) circle (0.05);
      }
      \foreach \y in \axc {
        \draw[fill,red!80!black] (\x,\y) circle (0.05);
        \draw[fill,red!80!black] (\y,\x) circle (0.05);
      }
    }
  \end{tikzpicture}

  \vspace{0.25em}\(\Uhd\)
\end{minipage}
\relation
\begin{minipage}[c]{0.23\textwidth}
  \centering
  \begin{tikzpicture}[x=0.8cm,y=0.8cm,baseline=(bb.center)]
    \path[use as bounding box] (0,0) rectangle (4,4);
    \coordinate (bb) at (2,2);

    \draw[thin,lightgray] (0,0.975) -- (2,0.975);
    \draw[thin,lightgray] (2,0.975) -- (4,0.975);
    \draw[thin,lightgray] (0,3.025) -- (2,3.025);
    \draw[thin,lightgray] (2,3.025) -- (4,3.025);
    \draw[thin,lightgray] (0.975,0) -- (0.975,2);
    \draw[thin,lightgray] (0.975,2) -- (0.975,4);
    \draw[thin,lightgray] (3.025,0) -- (3.025,2);
    \draw[thin,lightgray] (3.025,2) -- (3.025,4);

    \draw[thick,black] (0,0) rectangle (4,4);
    \draw[thick,blue!80!black] (0.1,2) -- (1.9,2);
    \draw[thick,blue!80!black] (2.1,2) -- (3.9,2);
    \draw[thick,blue!80!black] (2,0.1) -- (2,1.9);
    \draw[thick,blue!80!black] (2,2.1) -- (2,3.9);

    \draw[thick,black,dash pattern= on 4pt off 2pt] (0,2.1) -- (1.9,2.1);
    \draw[thick,black,dash pattern= on 4pt off 2pt] (0,1.9) -- (1.9,1.9);

    \draw[thick,black,dash pattern= on 4pt off 2pt] (2.1,2.1) -- (4,2.1);
    \draw[thick,black,dash pattern= on 4pt off 2pt] (2.1,1.9) -- (4,1.9);

    \draw[thick,black,dash pattern= on 4pt off 2pt] (2.1,0) -- (2.1,1.9);
    \draw[thick,black,dash pattern= on 4pt off 2pt] (1.9,0) -- (1.9,1.9);

    \draw[thick,black,dash pattern= on 4pt off 2pt] (2.1,2.1) -- (2.1,4);
    \draw[thick,black,dash pattern= on 4pt off 2pt] (1.9,2.1) -- (1.9,4);

    \tikzmath{
      let \axa = {0.5,1.45,2.55,3.5};
      let \axb = {0,0.975,3.025,4};
      let \axc = {1.9,2.1};
    }
    \foreach \x in \axa {
      \foreach \y in \axb {
        \draw[fill] (\x,\y) circle (0.05);
        \draw[fill] (\y,\x) circle (0.05);
      }
      \foreach \y in \axc {
        \draw[fill,red!80!black] (\x,\y) circle (0.05);
        \draw[fill,red!80!black] (\y,\x) circle (0.05);
      }
    }

    \draw[fill,blue!80!black] (0.925,1.95) rectangle (1.025,2.05);
    \draw[fill,blue!80!black] (2.975,1.95) rectangle (3.075,2.05);
    \draw[fill,blue!80!black] (1.95,0.925) rectangle (2.05,1.025);
    \draw[fill,blue!80!black] (1.95,2.975) rectangle (2.05,3.075);
  \end{tikzpicture}

  \vspace{0.25em}\(\Uhbddc\)
\end{minipage}
\relation
\begin{minipage}[c]{0.23\textwidth}
  \centering
  \begin{tikzpicture}[x=0.8cm,y=0.8cm,baseline=(bb.center)]
    \path[use as bounding box] (0,0) rectangle (4,4);
    \coordinate (bb) at (2,2);

    \draw[thin,lightgray] (0,1) -- (2,1);
    \draw[thin,lightgray] (2,1) -- (4,1);
    \draw[thin,lightgray] (0,3) -- (2,3);
    \draw[thin,lightgray] (2,3) -- (4,3);
    \draw[thin,lightgray] (1,0) -- (1,2);
    \draw[thin,lightgray] (1,2) -- (1,4);
    \draw[thin,lightgray] (3,0) -- (3,2);
    \draw[thin,lightgray] (3,2) -- (3,4);

    \draw[thick,black] (0,2) -- (4,2);
    \draw[thick,black] (2,0) -- (2,4);
    \draw[thick,black] (0,0) rectangle (4,4);

    \tikzmath{
      let \axa = {0.5,1.5,2.5,3.5};
      let \axb = {0,1,3,4};
      let \axc = {2};
    }
    \foreach \x in \axa {
      \foreach \y in \axb {
        \draw[fill] (\x,\y) circle (0.05);
        \draw[fill] (\y,\x) circle (0.05);
      }
      \foreach \y in \axc {
        \draw[fill,red!80!black] (\x,\y) circle (0.05);
        \draw[fill,red!80!black] (\y,\x) circle (0.05);
      }
    }
  \end{tikzpicture}

  \vspace{0.25em}\(\Uhc\)
\end{minipage}
\relation
\begin{minipage}[c]{0.23\textwidth}
  \centering
  \begin{tikzpicture}[x=0.8cm,y=0.8cm,baseline=(bb.center)]
    \path[use as bounding box] (0,0) rectangle (4,4);
    \coordinate (bb) at (2,2);

    \draw[thin,lightgray] (0,1) -- (2,1);
    \draw[thin,lightgray] (2,1) -- (4,1);
    \draw[thin,lightgray] (0,3) -- (2,3);
    \draw[thin,lightgray] (2,3) -- (4,3);
    \draw[thin,lightgray] (1,0) -- (1,2);
    \draw[thin,lightgray] (1,2) -- (1,4);
    \draw[thin,lightgray] (3,0) -- (3,2);
    \draw[thin,lightgray] (3,2) -- (3,4);

    \draw[thick,black] (0,2) -- (4,2);
    \draw[thick,black] (2,0) -- (2,4);
    \draw[thick,black] (0,0) rectangle (4,4);

    \tikzmath{
      let \axa = {0.5,1.5,2.5,3.5};
      let \axb = {0,1,3,4};
      let \axc = {2};
    }
    \foreach \x in \axa {
      \foreach \y in \axb {
        \draw[fill] (\x,\y) circle (0.05);
        \draw[fill] (\y,\x) circle (0.05);
      }
      \foreach \y in \axc {
        \node[red!80!black,font=\small] at (\x,\y) {0};
        \node[red!80!black,font=\small] at (\y,\x) {0};
      }
    }
  \end{tikzpicture}

  \vspace{0.25em}\(\Uhbub\)
\end{minipage}
  \caption{Visual representation of the spaces in the BDDC framework. Spaces are represented for order $k = 0$ on a $4 \times 4$ cartesian mesh divided into four $2 \times 2$ subdomains. Interior and interface face DoFs are drawn as black and red dots respectively. Coarse DoFs (constraints) are represented as blue squares. Red zeroes represent the vanishing DoFs (fixed zero value) in the bubble space. Cell DoFs are not present for clarity. From left to right: sub-assembled space $\Uhd$, BDDC space $\Uhbddc$, global hybrid space $\Uhc$, bubble space $\Uhbub$.}
  \label{fig:bddc_spaces}
\end{figure}

Let $\Omega \subset \mathbb{R}^d$, with $d=2, 3$, be a bounded polyhedral domain. We consider the  Poisson equation with homogeneous Dirichlet conditions as a model problem; other boundary conditions can readily be considered with minor modifications. The weak form of the problem reads as follows: find $u \in H_0^{1}(\Omega)$ such that  
\begin{equation}
  \label{eq:continuous.problem}
  \langle \mathcal{A} u, v \rangle \doteq \int_\Omega \alpha \nabla u \cdot \nabla v = \langle f, v \rangle \quad \hbox{for any } v \in H_0^{1}(\Omega), 
\end{equation}
where $f \in H^{-1}(\Omega)$ is the forcing term and $\alpha \in L^\infty(\Omega)$ is a physical coefficient such that $\alpha\ge \alpha_0$ almost everywhere in $\Omega$, where $\alpha_0>0$ is a constant.
 In the analysis below, we consider the constant coefficient case for simplicity in the analysis. However, the condition number bounds below can be extended to problems with high-contrast interfaces, e.g., multi-material problems, after a suitable redefinition of the weighting operator and standard analysis arguments in domain decomposition methods \cite{Dryja-NumerMathHeidelb-1996v,Badia-ApplMathLett-2019y}. We numerically explore problems with high-contrast interfaces in Section \ref{sec:jumping.coefficients}.

Consider a hybrid discretisation $\A_h$ of $\A$ on $\Uhd$ (see Section \ref{sec:disc.skel.methods} for more details). Note that $\A_h$ is generally singular (the constant value is not fixed for subdomains not touching $\partial \dom$). As is usually the case in hybrid discretisations, we will assume that this operator is a block-diagonal operator, with blocks $\A_h(T)$ corresponding to each subdomain $T \in \TH$. The operator $\A_h$ can then be written as follows: for $\ul{u}_h \in \Uhd$,
\begin{equation*}
  \langle \A_h \ul{u}_h, \ul{v}_h \rangle = \sum_{T \in \TH} \langle \A_h(T) \ul{u}_h(T), \ul{v}_h(T) \rangle_T, \qquad \forall \ul{v}_h \in \Uhd.
\end{equation*}  
We use $\| \cdot \|_{\A_h}$ to denote the operator seminorm induced by $\A_h$ on $\Uhd$; analogously, we denote by $\| \cdot \|_{\A_h,T}$ the operator seminorm induced by $\A_h(T)$ on $\Uhc(T)$, for each $T \in \TH$.
We assume that, in every subdomain $T \in \TH$, the operator seminorm is equivalent to the discrete $H^1$-seminorm. This assumption holds for most hybrid nonconforming discretisations (see references provided in Section \ref{sec:disc.skel.methods}).

\begin{assumption}\label{assum:norm.equivalence.Ah}
The following norm equivalence holds in every subdomain $T \in \TH$
\begin{equation}\label{eq:norm.equivalence.Ah}
\norm{{\A}_h,T}{\ul{v}_h}^2 \simeq \seminorm{1,h,T}{\ul{v}_h}^2, \quad \forall \ul{v}_h \in \Uhd(T), \quad \forall T \in \TH.
\end{equation}
\end{assumption}


Let $\wh{\A}_h$ (respectively, $\wt{\A}_h$, $\A_{h,0}$) denote the operators corresponding to the hybrid discretisation of $\A$ on $\Uhc$ (respectively, $\Uhbddc$, $\Uhbub$). The hybrid operator $\wh{\A}_h$ and the linear functional $\ell_h$ define the problem we want to solve, which can be written as $\wh{\A}_h \ul{u}_h = \ell_h$. The operator $\wt{\A}_h$ is an essential ingredient of the BDDC preconditioner.

Define the (discontinuous) harmonic space $\ul{H}_h \subset \Uhd$ as the $\A_h$-orthogonal complement of $\Uhbub$. 
Thus, the discrete harmonic extension operator $\Eh: \Uhd \rightarrow {\ul{H}}_h$ is defined by solving independent local hybrid problems on each $T\in\TH$: for $w_h\in\Uhbb$, $\Eh(w_h)\in \ul{H}_h\subset \Uhd$ is such that
\begin{equation}\label{def:Eh}
\mathcal{E}_h w_h = w_h \mbox{ on }  \FH \qquad\mbox{ and }\qquad\langle \A_{h} \Eh (w_h), \ul{\xi}_h \rangle=0
\quad \forall \ul{\xi}_h \in \Uhbub.
\end{equation}
Thus, the harmonic spaces are uniquely determined by skeleton DOFs on $\FH$. We define the restriction $\wh{\mathcal{E}}_h$ and $\wt{\mathcal{E}}_h$  of $\Eh$ onto $\Uhc$ and $\Uhbddc$, respectively. 

Consider a \emph{weighting} operator $W_h: \wt{U}_h^\partial \rightarrow \Uhcb(\FH)$. For hybrid methods, the weighting operator can be defined coarse face-wise by taking the average of the two values at the fine faces. Specifically, at a coarse face $F \in \FH$ shared by subdomains $T, \, T' \in \TH$, the operator $W_h$ is defined as follows: given $\ul{u}_h \in \Uhd$, 
\begin{align}\label{def:weightingOp}
  {(W_h(\ul{u}_h)(T))_f} = {(W_h(\ul{u}_h)(T'))_f} = \frac{1}{2}({(\ul{u}_h(T))_f+(\ul{u}_h(T'))_f}) \quad \forall f\in\Fh(F),
\end{align}
while it preserves the values on the fine faces and cell values. Naturally, $W_h: \Uhd \rightarrow \Uhc$. 

\begin{figure}
  \centering
  \begin{tikzpicture}[x=0.7cm,y=0.7cm]
  \draw[thin,lightgray] (0,0.975) -- (2,0.975);
  \draw[thin,lightgray] (2,0.975) -- (4,0.975);
  \draw[thin,lightgray] (0,3.025) -- (2,3.025);
  \draw[thin,lightgray] (2,3.025) -- (4,3.025);
  \draw[thin,lightgray] (0.975,0) -- (0.975,2);
  \draw[thin,lightgray] (0.975,2) -- (0.975,4);
  \draw[thin,lightgray] (3.025,0) -- (3.025,2);
  \draw[thin,lightgray] (3.025,2) -- (3.025,4);

  \draw[thick,black] (0,0) rectangle (4,4);
  \draw[gray,dash pattern= on 1pt off 2pt] (0,2) -- (4,2);
  \draw[gray,dash pattern= on 1pt off 2pt] (2,0) -- (2,4);

  \draw[thick,black,dash pattern= on 4pt off 2pt] (0,2.1) -- (1.9,2.1);
  \draw[thick,black,dash pattern= on 4pt off 2pt] (0,1.9) -- (1.9,1.9);

  \draw[thick,black,dash pattern= on 4pt off 2pt] (2.1,2.1) -- (4,2.1);
  \draw[thick,black,dash pattern= on 4pt off 2pt] (2.1,1.9) -- (4,1.9);

  \draw[thick,black,dash pattern= on 4pt off 2pt] (2.1,0) -- (2.1,1.9);
  \draw[thick,black,dash pattern= on 4pt off 2pt] (1.9,0) -- (1.9,1.9);

  \draw[thick,black,dash pattern= on 4pt off 2pt] (2.1,2.1) -- (2.1,4);
  \draw[thick,black,dash pattern= on 4pt off 2pt] (1.9,2.1) -- (1.9,4);

  \tikzmath{
    let \axa = {0.5,1.45,2.55,3.5};
    let \axb = {0,0.975,3.025,4};
    let \axc = {1.9,2.1};
  }
  \foreach \x in \axa {
    \foreach \y in \axc {
      \draw[fill,red!80!black] (\x,\y) circle (0.05);
      \draw[fill,red!80!black] (\y,\x) circle (0.05);
    }
  }
\end{tikzpicture}
\raisebox{3.2em}{$\ \xlongrightarrow{W_h} \ $}
\begin{tikzpicture}[x=0.7cm,y=0.7cm]
  \draw[thin,lightgray] (0,1) -- (2,1);
  \draw[thin,lightgray] (2,1) -- (4,1);
  \draw[thin,lightgray] (0,3) -- (2,3);
  \draw[thin,lightgray] (2,3) -- (4,3);
  \draw[thin,lightgray] (1,0) -- (1,2);
  \draw[thin,lightgray] (1,2) -- (1,4);
  \draw[thin,lightgray] (3,0) -- (3,2);
  \draw[thin,lightgray] (3,2) -- (3,4);

  \draw[thick,black] (0,2) -- (4,2);
  \draw[thick,black] (2,0) -- (2,4);
  \draw[thick,black] (0,0) rectangle (4,4);

  \tikzmath{
    let \axa = {0.5,1.5,2.5,3.5};
    let \axb = {0,1,3,4};
    let \axc = {2};
  }
  \foreach \x in \axa {
    \foreach \y in \axc {
      \draw[fill,red!80!black] (\x,\y) circle (0.05);
      \draw[fill,red!80!black] (\y,\x) circle (0.05);
    }
  }
\end{tikzpicture}
\raisebox{3.2em}{$\ \xlongrightarrow{\Eh} \ $}
\begin{tikzpicture}[x=0.7cm,y=0.7cm]
  \draw[thin,lightgray] (0,1) -- (2,1);
  \draw[thin,lightgray] (2,1) -- (4,1);
  \draw[thin,lightgray] (0,3) -- (2,3);
  \draw[thin,lightgray] (2,3) -- (4,3);
  \draw[thin,lightgray] (1,0) -- (1,2);
  \draw[thin,lightgray] (1,2) -- (1,4);
  \draw[thin,lightgray] (3,0) -- (3,2);
  \draw[thin,lightgray] (3,2) -- (3,4);

  \draw[thick,black] (0,2) -- (4,2);
  \draw[thick,black] (2,0) -- (2,4);
  \draw[thick,black] (0,0) rectangle (4,4);

  \tikzmath{
    let \axa = {0.5,1.5,2.5,3.5};
    let \axb = {0,1,3,4};
    let \axc = {2};
  }
  \foreach \x in \axa {
    \foreach \y in \axa {
    }
    \foreach \y in \axb {
      \draw[fill,black] (\x,\y) circle (0.05);
      \draw[fill,black] (\y,\x) circle (0.05);
    }
    \foreach \y in \axc {
      \draw[fill,red!80!black] (\x,\y) circle (0.05);
      \draw[fill,red!80!black] (\y,\x) circle (0.05);
    }
  }
\end{tikzpicture}
  \caption{Schematic representation of the operator $\mathcal{Q}_h$. Interior and interface face DoFs are drawn as black and red dots respectively. Cell DoFs are not present for clarity.}
  \label{fig:bddc_operators}
\end{figure}

The operator $\mathcal{Q}_h \doteq \wh{\mathcal{E}}_h W_h: \Uhd \rightarrow \wh{\ul{H}}_h(\doteq \ul{H}_h \cap \Uhc)$ weighs the interface values and then extends them harmonically to the interior. A visual representation of these operators is provided in Figure \ref{fig:bddc_operators}.
Using these components, we define the BDDC preconditioner $\wh{\mathcal{B}}_h: \Uhc' \rightarrow \Uhc$ as:
$$
\wh{\mathcal{B}}_h \doteq  \mathcal{Q}_h \widetilde{\A}_h^{-1} \mathcal{Q}_h^{\top} 
+ \A_{h,0}^{-1}.
$$

\begin{theorem}[Condition number estimate]\label{thm:bddc}
  Assume that every coarse face $F \in \FH(T)$ satisfies the regularity Assumption \ref{assum:reg.Gamma}, and that the corresponding set of coarse degrees of freedom $\Lambda_F$ includes the mean value functional, defined by
    \begin{equation*}
    w_h\in\Uhdb(F)\mapsto \int_F w_h.
  \end{equation*}
  Assume that the discretisation satisfies the norm equivalence in Assumption \ref{assum:norm.equivalence.Ah}.
  Then, the following condition number bound for the BDDC preconditioner holds:
  $$\kappa(\wh{\mathcal{B}}_h \wh{\A}_h) \lesssim \left(1 + \ln \frac{H}{h} \right)^2.$$
  The hidden constant is independent of $h$, $H$, and the number of subdomains $T$ in the partition $\MH$.
\end{theorem}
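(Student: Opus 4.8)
The plan is to follow the classical abstract BDDC framework (as in Toselli--Widlund or Mandel--Dohrmann), which reduces the condition number estimate to two ingredients: a lower bound $\lambda_{\min}(\wh{\mathcal{B}}_h\wh{\A}_h)\gtrsim 1$ and an upper bound $\lambda_{\max}(\wh{\mathcal{B}}_h\wh{\A}_h)\lesssim (1+\ln(H/h))^2$. The lower bound is the ``standard'' half: one shows that the additive decomposition $\wh{\mathcal{B}}_h = \mathcal{Q}_h\wt{\A}_h^{-1}\mathcal{Q}_h^\top + \A_{h,0}^{-1}$ is a stable splitting. Since $\A_{h,0}^{-1}$ handles the bubble (interior) part exactly and $\mathcal{Q}_h = \wh{\mathcal E}_h W_h$ reproduces $\Uhc$-harmonic functions (because $W_h$ is a partition of unity on single-valued functions and $\wh{\mathcal E}_h$ is the identity on $\wh{\ul H}_h$), the operator $\wh{\mathcal{B}}_h\wh{\A}_h$ is a projection-like operator and the lower bound follows from $\mathcal{Q}_h$-stability, i.e. from showing $\|\mathcal{Q}_h \ul{v}_h\|_{\wt{\A}_h}\gtrsim$ nothing is lost — this is typically routine given \eqref{eq:norm.equivalence.Ah} and the fact that the mean-value functional is among the primal constraints. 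I would state this as a short lemma and defer the computation to Section~\ref{sec:bddc.analysis}.

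The crux is the upper bound, which by the abstract theory reduces to bounding the ``$\mathcal{Q}_h$-stability constant'': for all $\ul{v}_h$ in the BDDC harmonic space $\wt{\ul H}_h$,
\[
\|\wh{\mathcal E}_h W_h \ul{v}_h\|_{\wh{\A}_h}^2 \;\lesssim\; \Big(1+\ln\tfrac{H}{h}\Big)^2\,\|\ul{v}_h\|_{\wt{\A}_h}^2.
\]
Using \eqref{eq:norm.equivalence.Ah} to pass between $\A_h$-norms and discrete $H^1$-seminorms, and using that discrete harmonic extensions minimise the $|\cdot|_{1,h,T}$-seminorm subject to given interface data, the left-hand side is controlled subdomain-by-subdomain by $|W_h\ul{v}_h(T)|_{1,h,T}^2$. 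The difference $W_h\ul{v}_h(T)-\ul{v}_h(T)$ on $\partial T$ is, coarse-face by coarse-face, (half) the jump of $\ul{v}_h$ across $F$, extended by zero to the rest of $\partial T$ — and on each $F$ this jump has zero mean by the primal constraint. This is exactly the setting of Theorem~\ref{thm:TruncationEst}: applying it (with $\Gamma = F$, and $\ul{v}_h$ there being a discrete harmonic lifting of the jump) gives the $\tnorm{1/2,h}{\cdot}$-norm of the truncated jump bounded by $(1+\ln(H/h))$ times its $|\cdot|_{1,h}$-seminorm; combining with the lifting Theorem~\ref{thm:lifting} and the trace inequality Theorem~\ref{thm:trace} to move back and forth between the interface seminorm on $\partial T$ and the volumetric seminorm, summing over the (boundedly many) coarse faces of $T$, and then over all $T\in\TH$, yields the claimed bound with the logarithm squared (one power from Theorem~\ref{thm:TruncationEst}, the second from the standard counting of coarse-face interactions / the need to control $\|W_h\ul v_h(T)\|_{L^\infty}$-type terms against $\|\ul v_h\|$).

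I expect the main obstacle to be the \emph{localisation step}: carefully writing $W_h\ul{v}_h(T)$ as $\ul{v}_h(T)$ plus a sum over the coarse faces $F\subset\partial T$ of truncated jumps, controlling the zero-extension of each truncated jump outside $F$ within $\partial T$ (so that Theorem~\ref{thm:TruncationEst} applies with its hypothesis \eqref{eq:int.F.v} — this is where the mean-value primal constraint is essential), and then reassembling these local estimates into a global bound that is genuinely independent of the number of subdomains. The scaling argument of Remark~\ref{rem:Truncation.Scaling} is what converts $\ln(\mathrm{diam}(\Omega)/h)$ in Theorem~\ref{thm:TruncationEst} into $\ln(H/h)$ at the subdomain level, since each subdomain has diameter $\simeq H$; I would invoke it explicitly. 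A secondary technical point is verifying that the regularity Assumption~\ref{assum:reg.Gamma} for each coarse face relative to each neighbouring subdomain (assumed in the statement) is exactly what is needed to apply Theorem~\ref{thm:TruncationEst} uniformly, with constants independent of $H$ and of the subdomain count.
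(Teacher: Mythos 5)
Your overall strategy is the same as the paper's: the abstract additive Schwarz/BDDC bound reduces everything to estimating $\sup\|\mathcal{Q}_h\ul{v}_h-\ul{v}_h\|_{\A_h}^2/\|\ul{v}_h\|_{\A_h}^2$ over discretely harmonic functions in $\Uhbddc$; the difference is decomposed coarse-face by coarse-face into zero-extended half-jumps which have zero mean on $F$ thanks to the mean-value primal constraint; and each piece is controlled through the norm equivalence \eqref{eq:norm.equivalence.Ah}, the lifting of Theorem \ref{thm:lifting}, and the truncation estimate of Theorem \ref{thm:TruncationEst} rescaled to subdomains of diameter $H$ as in Remark \ref{rem:Truncation.Scaling}. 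This is exactly the structure of Section \ref{sec:bddc.analysis}.

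There is, however, one genuine gap, and it sits precisely at the ``localisation step'' you flag but do not resolve. The jump across $F$ seen from $T$ has two halves: $\tr_F(\ul{v}_h(T))-\alpha_F$, which lives on $T$ and is handled by a direct application of Theorem \ref{thm:TruncationEst} in $T$ (Corollary \ref{cor:FaceRestrictionOp}), and $\tr_F(\ul{v}_h(T'))-\alpha_F$, whose volumetric control is only available in the neighbouring subdomain $T'$, while the truncated function must be measured in the seminorm $\seminorm{1/2,h,\partial T}{{\cdot}}$ attached to the geometry of $\partial T$. Theorem \ref{thm:TruncationEst} cannot be invoked in $T$ for this half unless you exhibit an element of $\Uhd(T)$ whose trace on $F$ equals $\tr_F(\ul{v}_h(T'))-\alpha_F$ and whose $\seminorm{1,h,T}{{\cdot}}$-seminorm is bounded by $\seminorm{1,h,T'}{\ul{v}_h(T')}$. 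Your device, ``a discrete harmonic lifting of the jump'', does not supply this: the energy of the harmonic function in $T$ whose trace is the zero-extended jump is (by harmonic minimality, the lifting, and the trace inequality) equivalent to the very $H^{1/2}$-seminorm you are trying to bound, so that reading is circular, and building a lifting from data given on $F$ alone would itself require a zero-extension bound on $\partial T$. The paper needs a separate result for this, Corollary \ref{cor:moving.between.subdomains}, whose proof re-enters the proof of Theorem \ref{thm:TruncationEst}: the cross-interaction term $E_0(T)$ is bounded by a quantity \eqref{eq:est.E0} that depends only on $F$, after which the estimates of Proposition \ref{prop:bddc.L2} can be run in $T'$, and the $F$-internal part is bounded by the trace inequality in $T'$. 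A minor further inaccuracy: the square in $(1+\ln(H/h))^2$ comes solely from squaring the single-logarithm seminorm estimate of Theorem \ref{thm:TruncationEst}; no additional logarithm arises from counting coarse-face interactions (one simply uses $\#\FH(T)\lesssim 1$) or from any $L^\infty$-type control.
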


\begin{proof} See Section \ref{sec:bddc.analysis}. \end{proof}

\begin{remark}[Application to hybrid methods]\label{rem:applicationtoBDDC}
  Theorem \ref{thm:bddc} applies to a wide class of hybrid numerical methods whose discrete spaces are built from polynomials on cells and faces, possibly with varying polynomial degrees and non-uniform distributions across the mesh; see \cite[Section 2.5]{Badia-Droniou-Tushar-2024-DiscreteTraceTheory}. The result encompasses methods that may omit cell unknowns, such as certain \ac{hho} variants \cite[Section 5.1]{di-pietro.droniou:2020:hybrid}, and is robust with respect to the specific structure of the discrete space. For methods involving only cell unknowns, such as polytopal \ac{dg} methods, the framework can be extended by hybridizing the discretisation at least on coarse faces, following ideas from \cite{dryja-Galvis-Sarkis-2007-bddc-dg}. Thus, the condition number estimate remains valid under these generalizations, provided the key assumptions on the operator discretisation and coarse degrees of freedom in Theorem \ref{thm:bddc} are satisfied, namely Assumptions \ref{assum:reg.Gamma} and \ref{assum:norm.equivalence.Ah}.
\end{remark}

\subsection{Face restriction operator}\label{sec:FaceRestrictionOperator}

Given a subdomain $T \in \TH$ and a coarse face $F \in \FH(T)$, the face restriction operator $\theta_F^T: \Uhd(T) \to {\ul{H}}_h(T)$ is defined as follows: given $\ul{v}_h \in \Uhd(T)$, $\theta_F^T(\ul{v}_h) \in {\ul{H}}_h(T)$ is the unique element that is discretely harmonic (see \eqref{def:Eh}) and satisfies, for all $f \in \Fhb(T)$:
\begin{align*}
  \tr_f(\theta_F^T(\ul{v}_h)) =
    \begin{cases}
      v_f & \text{if } f \in \Fh(F), \\
      0   & \text{if } f \in \Fhb(T) \setminus \Fh(F).
    \end{cases}
\end{align*}
We remark that the operator $\theta_F^T$ depends solely on the face values of $\ul{v}_h$ on the coarse face $F$. With a slight abuse of notation, we will also use $\theta_F^T$ to denote the operator acting directly on these face values.

The following result is a direct consequence of the truncation estimate for piecewise broken polynomial spaces proved in Theorem \ref{thm:TruncationEst}.
\begin{corollary}\label{cor:FaceRestrictionOp}
  Let $F \in \FH(T)$ satisfy the regularity Assumption \ref{assum:reg.Gamma}, and $\ul{v}_h \in \Uhd(T)$ be such that
    $\int_F \tr(\ul{v}_h) = 0$.
Then, the following estimate holds:
$$\seminorm{1/2,h,\partial T}{\tr_{\partial T}(\theta_F^T(\ul{v}_h))} \lesssim \left(1 + {\ln\frac{H}{h}}\right) \seminorm{1,h,T}{\ul{v}_h}.$$
\end{corollary}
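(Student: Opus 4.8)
The plan is to deduce the corollary directly from Theorem \ref{thm:TruncationEst}, applied on the subdomain $T$ (in the role of $\dom$) endowed with the restricted fine hybrid mesh $\Mh(T)$, the local hybrid space $\Uhc(T)$, and with the coarse face $F$ in the role of $\Gamma$. The first step is to recognise the object on the left-hand side: by the defining property of the face restriction operator $\theta_F^T$, its boundary trace $\tr_{\partial T}(\theta_F^T(\ul{v}_h))$ is exactly the truncated boundary function $w_h$ built from $\tr_{\partial T}(\ul{v}_h)$ via \eqref{bddc.wf}, with $\Fh(\Gamma)$ replaced by $\Fh(F)$ and $\Fhb$ by $\Fhb(T)$. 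Consequently $\seminorm{1/2,h,\partial T}{\tr_{\partial T}(\theta_F^T(\ul{v}_h))} = \tnorm{1/2,h,\partial T}{w_h}$, so the discrete-harmonic interior extension carried by $\theta_F^T$ plays no role in this estimate.

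Next I would check that the hypotheses of Theorem \ref{thm:TruncationEst} hold in this localised setting. Mesh regularity (Assumption \ref{assum:reg.mesh}) for $\Mh(T)$ is inherited from the global mesh $\Mh$, since $\Mh(T)$ is obtained by restriction; the regularity of $F$ relative to $T$ (Assumption \ref{assum:reg.Gamma}) is assumed throughout Section \ref{sec:bddc.setting}; the normalisation \eqref{eq:int.F.v} is precisely the hypothesis $\int_F \tr(\ul{v}_h) = 0$ of the corollary; and $T$, being a polytope, satisfies a uniform cone condition. With these in hand, Theorem \ref{thm:TruncationEst} gives
$$
\tnorm{1/2,h,\partial T}{w_h} \lesssim \left(1 + \ln\frac{\mathrm{diam}(T)}{h}\right)\seminorm{1,h,T}{\ul{v}_h}.
$$
The final step is cosmetic: since $\mathrm{diam}(T) \le H$ (indeed $\mathrm{diam}(T)\simeq H$ under the usual quasi-uniformity of $\TH$), one has $1 + \ln(\mathrm{diam}(T)/h) \le 1 + \ln(H/h)$, which yields the announced estimate.

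I do not expect any genuine analytic obstacle here — the corollary is essentially a transcription of Theorem \ref{thm:TruncationEst} into the domain-decomposition notation. The one point that requires care is bookkeeping of constants: for the resulting bound to be uniform in the number of subdomains, the constants produced by Theorem \ref{thm:TruncationEst} when applied on each $T\in\TH$ must be uniform in $T$. This is exactly what makes relevant the standing assumption that Assumption \ref{assum:reg.Gamma} holds for every coarse face with an $h$-independent $C_F$, together with a shape-regularity hypothesis on the coarse partition $\TH$ ensuring that the cone-condition parameters of the subdomains, as well as the comparability $\mathrm{diam}(T)\simeq H$, can be chosen independently of $T$.
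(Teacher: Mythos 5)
Your proposal is correct and matches the paper, which states this corollary as a direct consequence of Theorem \ref{thm:TruncationEst}: one identifies $\tr_{\partial T}(\theta_F^T(\ul{v}_h))$ with the truncated boundary function \eqref{bddc.wf} for $\Gamma=F$ on the local mesh of $T$, applies the theorem there, and uses the scaling argument of Remark \ref{rem:Truncation.Scaling} with $\mathrm{diam}(T)\simeq H$. Your closing remark on the uniformity of the cone-condition and $C_\Gamma$ constants over the subdomains is exactly the (implicit) bookkeeping the paper relies on for the $T$-independence of the hidden constant.
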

The following result facilitates stable propogation between neighbouring subdomains.
\begin{corollary}\label{cor:moving.between.subdomains}
  Let $T,T'$ be two neighbouring subdomains and $F$ be the coarse face common to $T$ and $T'$. Let $\ul{v}_h(T')\in\Uhd(T')$ and denote by $w_{T,F}$ the piecewise polynomial function on $\Fhb(T)$ constructed by extending by $0$ to $\partial T$ the face values of $\ul{v}_h(T')$, that is: $w_{T,F} = \tr_{F}(\ul{v}_h(T'))$ on $F$ and $w_{T,F} = 0$ on $\partial T \backslash F$. Then under Assumption \ref{assum:reg.Gamma} with $\Gamma = F$ and $\int_F \tr_F(\ul{v}_h(T')) = 0$, the following estimate holds:
  \begin{equation}\label{eq:estimate.truncation.transfer}
    \seminorm{1/2,h,\partial T}{w_{T,F}} \lesssim \left(1+\ln\frac{H}{h}\right) \seminorm{1,h,T'}{\ul{v}_h(T')}.
  \end{equation}
\end{corollary}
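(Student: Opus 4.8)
The plan is to derive \eqref{eq:estimate.truncation.transfer} from the analysis of Section~\ref{sec:Pf.FaceRestrictionOp} by keeping the ``boundary'' part of the argument on $\partial T$ while performing the ``reaching into the domain'' part on the neighbouring subdomain $T'$. Set $g\doteq\tr_F(\ul{v}_h(T'))$. Expanding $\seminorm{1/2,h,\partial T}{w_{T,F}}^2$ with \eqref{eq:def.disc.Hhalf} as in the opening of Section~\ref{sec:Pf.FaceRestrictionOp}, and using that $w_{T,F}$ vanishes on $\Fhb(T)\setminus\Fh(F)$, one gets
\[
  \seminorm{1/2,h,\partial T}{w_{T,F}}^2 \le \underbrace{\sum_{f\in\Fh(F)} h_f^{-1}\norm{L^2(f)}{g_f-\ol{g}_f}^2 + \sum_{(f,f')\in\FFh(F)} |f|_{d-1}|f'|_{d-1}\frac{|\ol{g}_f-\ol{g}_{f'}|^2}{\dffp^d}}_{\doteq A} + 2\,E_0^T,
\]
with $E_0^T=\sum_{f\in\Fh(F)}|f|_{d-1}|\ol{g}_f|^2\,S_f^T$ and $S_f^T=\sum_{f'\in\Fhb(T)\setminus\Fh(F)}|f'|_{d-1}/\dffp^d$.

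For $A$: since $\Fh(F)\subset\Fhb(T')$ and the distinct-pair set $\FFh(F)$ embeds in that of $\partial T'$, and all summands are nonnegative, $A\le\seminorm{1/2,h,\partial T'}{\tr_{\partial T'}(\ul{v}_h(T'))}^2\lesssim\seminorm{1,h,T'}{\ul{v}_h(T')}^2$ by the trace inequality (Theorem~\ref{thm:trace}) applied on $T'$. For $E_0^T$: the bound $S_f^T\lesssim 1/\dist(x_f,\partial T\setminus F)$ is exactly the chain \eqref{eq:part.l0.1}--\eqref{eq:part.l0.3}, which only uses the cardinality estimate $\#\W_{lf}\lesssim l^{d-2}$ \cite[Lemma 3.2]{Badia-Droniou-Tushar-2024-DiscreteTraceTheory} and the quasi-uniformity of the fine mesh near $\partial F$; since the uniform cone condition guarantees $\dist(x_f,\partial T\setminus F)\simeq\dist(x_f,\partial F)\simeq\dist(x_f,\partial T'\setminus F)$ (both $\partial T\setminus F$ and $\partial T'\setminus F$ meet $F$ along $\partial F$), this gives $E_0^T\lesssim\sum_{f\in\Fh(F)}|f|_{d-1}|\ol{g}_f|^2/\dist(x_f,\partial F)$, i.e.\ \eqref{eq:est.E0} with $\Gamma=F$. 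I would then organise this sum by the layers $\G_r$ of \eqref{def:bddc.layers} --- which, up to $\simeq$, are the same whether distances are measured along $\partial T$ or $\partial T'$ --- and invoke Proposition~\ref{prop:bddc.L2} on the domain $\Omega=T'$: this is legitimate because $F\subset\partial T'$ satisfies Assumption~\ref{assum:reg.Gamma} with respect to $T'$, $\int_F\tr_F(\ul{v}_h(T'))=0$, and $|F|_{d-1}\gtrsim 1$ after the normalisation of Remark~\ref{rem:Truncation.Scaling}. Combined with the harmonic-series bound \eqref{eq:bddc.log}, this yields $E_0^T\lesssim(1+|\ln h|)^2\seminorm{1,h,T'}{\ul{v}_h(T')}^2$. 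Adding the two contributions and converting $|\ln h|$ into $\ln(H/h)$ by the scaling argument with $\mathrm{diam}(T')\simeq H$ gives \eqref{eq:estimate.truncation.transfer}.

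An equivalent shortcut is to observe that $w_{T',F}\doteq\tr_{\partial T'}(\theta_F^{T'}(\ul{v}_h(T')))$ (the analogue of $w_{T,F}$ built on $\partial T'$) is bounded directly by Corollary~\ref{cor:FaceRestrictionOp} with $T$ replaced by $T'$, and that $\seminorm{1/2,h,\partial T}{w_{T,F}}^2$ and $\seminorm{1/2,h,\partial T'}{w_{T',F}}^2$ share the term $A$ and differ only in $E_0^T$ versus $E_0^{T'}$, which are comparable since $S_f^T\simeq S_f^{T'}\simeq 1/\dist(x_f,\partial F)$ for each $f\in\Fh(F)$. In either formulation the main obstacle is this geometric comparison of the cross terms --- controlling, uniformly in $h$ and in the position of $f$ inside $F$, the mass $\sum_{f'}|f'|_{d-1}/\dffp^d$ over the fine boundary faces of $T$ (or $T'$) lying outside $F$ --- for which the uniform cone condition for the subdomains and the estimate $|\partial F|_{d-2}\lesssim 1$ from Assumption~\ref{assum:reg.Gamma} are essential; the remaining steps are exactly the bookkeeping already done in Section~\ref{sec:Pf.FaceRestrictionOp}.
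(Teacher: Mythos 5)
Your proposal is correct and follows essentially the same route as the paper: split the $\partial T$ seminorm of $w_{T,F}$ into the $F$-only part (bounded via the trace inequality on $T'$) plus the cross term, bound the cross term by \eqref{eq:est.E0} and observe it depends only on $F$, then run the layer argument / Proposition \ref{prop:bddc.L2} in $T'$ with the scaling of Remark \ref{rem:Truncation.Scaling}. The only quibble is that the comparability $\dist(x_f,\partial T\setminus F)\simeq\dist(x_f,\partial F)\simeq\dist(x_f,\partial T'\setminus F)$ follows simply from the fact that both complements meet $F$ along $\partial F$ (as you also note), not from the uniform cone condition.
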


\begin{proof}
  Considering the definition of $w_{T',F}$, and following same arguments as in the proof of Theorem \ref{thm:TruncationEst} given in Section \ref{sec:Pf.FaceRestrictionOp} (see in particular \eqref{eq:bddc.Hhalf}), the discrete trace seminorm can be rewritten as follows:
  \begin{equation}\label{eq:bound.transfer.TT'}
    \seminorm{1/2,h,\partial T}{w_{T,F}}^2 = \seminorm{1/2,h,F}{\tr_F(\ul{v}_h(T'))}^2 + 2 \underbrace{\sum_{f \in \Fh(F)} \sum_{f' \in \Fhb(T) \backslash \Fh(F)} |f|_{d-1} |f'|_{d-1} \frac{|\ol{v}_f|^2}{\dffp^d}}_{\doteq E_0(T)}.
  \end{equation}
    We then apply \eqref{eq:est.E0} to get
    \begin{equation}\label{eq:est.E0.bound.norms}
    E_0(T)\lesssim \sum_{f \in \Fh(F)} |f|_{d-1} \frac{|\ol{v}_f|^2}{\mbox{dist}(x_f,\partial F)}.
    \end{equation}
    The right-hand side in this upper bound no longer depends on $T$, but only on $F$. We can therefore use the arguments developed after \eqref{eq:est.E0} (see \eqref{eq:bddc.E0.2} and \eqref{eq:bddc.log}) but in $T'$ instead of $T$ (and tracking the scaling due to the size $H$ of $T$, see Remark \ref{rem:Truncation.Scaling}) to infer that this right-hand side is bounded above by $(1+\ln \frac{H}{h})^2\seminorm{1,h,T'}{\ul{v}_h(T')}^2$.
    The proof of  \eqref{eq:estimate.truncation.transfer} is concluded by plugging this bound into \eqref{eq:est.E0.bound.norms}, by using the resulting estimate in \eqref{eq:bound.transfer.TT'}, and by invoking the trace inequality \eqref{eq:trace} in $T'$ to write $\seminorm{1/2,h,F}{\tr_F(\ul{v}_h(T'))}\le \seminorm{1/2,h,\partial T'}{\tr(\ul{v}_h(T'))}\lesssim\seminorm{1,h,T'}{\ul{v}_h(T')}$.
\end{proof}

\subsection{Proof of Theorem \ref{thm:bddc}}\label{sec:bddc.analysis}
The condition number $\kappa$ of the BDDC preconditioned system $\wh{\mathcal{B}}_h \wh{\A}_h$ is defined as 
  \begin{align*}
    \kappa(\wh{\mathcal{B}}_h \wh{\A}_h) = \frac{\mu_{\max}(\wh{\mathcal{B}}_h \wh{\A}_h)}{\mu_{\min}(\wh{\mathcal{B}}_h \wh{\A}_h)},
  \end{align*}
  where $\mu_{\max/\min}$ denotes the maximal/minimal eigenvalue.
  By applying the abstract additive Schwarz theory from \cite[Theorem 15]{Mandel2008}, we obtain the following bound:
  \begin{align}\label{eq:cn.1}
    \frac{\mu_{\max}(\wh{\mathcal{B}}_h \wh{\A}_h)}{\mu_{\min}(\wh{\mathcal{B}}_h \wh{\A}_h)} \leq \max \left\{ 1, \sup_{\ul{v}_h \in \Uhbddc} \frac{ \| \mathcal{Q}_h \ul{v}_h \|_{\A_h}^2}{\|\ul{v}_h \|_{\A_h}^2} \right\}.
  \end{align}
 %
  Let $\wt{\ul{U}}_{h,0}$ denote the subspace of functions in $\Uhbddc$ on which  all functionals in $\bigcup_{F\in\Fh}\Lambda_F$ vanish. The space $\Uhbddc$ admits the decomposition $\Uhbddc = \wt{\ul{U}}_{h,0} \oplus \wt{\ul{H}}_h$, where $\wt{\ul{H}}_h \doteq \ul{H}_h \cap \Uhbddc$ and $\wt{\ul{U}}_{h,0}$ is $\A_h$-orthogonal to $\wt{\ul{H}}_h$. Therefore, for any $\ul{v}_h \in \Uhbddc$, we have $\| \ul{v}_h \|_{\A_h} \geq \| \wt{\mathcal{E}}_h \ul{v}_h\|_{\A_h}$, since the discrete harmonic extension $\wt{\mathcal{E}}_h \ul{v}_h$ minimises the energy norm among all functions with the same degrees of freedom on the interfaces. Using this property, together with the facts that $\mathcal{Q}_h \ul{v}_h = \mathcal{Q}_h \wt{\mathcal{E}}_h \ul{v}_h$ and co-domain of $\wt{\mathcal E}_h$ is the space of harmonic functions, we obtain:
 
  \begin{align}\label{eq:cn.2}
    \sup_{\ul{v}_h \in \Uhbddc} \frac{\|\mathcal{Q}_h \ul{v}_h\|^2_{\A_h}}{\|\ul{v}_h\|^2_{\A_h}} &\leq  \sup_{\ul{v}_h \in \Uhbddc} \frac{\|\mathcal{Q}_h \wt{\mathcal{E}}_h\ul{v}_h\|^2_{\A_h}}{\| \wt{\mathcal{E}}_h \ul{v}_h \|^2_{\A_h}} \leq \sup_{\ul{v}_h \in \wt{\ul{H}}_h} \frac{\|\mathcal{Q}_h \ul{v}_h\|^2_{\A_h}}{\| \ul{v}_h \|^2_{\A_h}} \nonumber\\
    &\lesssim 1 + \sup_{\ul{v}_h \in \wt{\ul{H}}_h} \frac{\|\mathcal{Q}_h \ul{v}_h - \ul{v}_h \|^2_{\A_h}}{\| \ul{v}_h \|^2_{\A_h}}.
  \end{align}
  Therefore, combining \eqref{eq:cn.1} and \eqref{eq:cn.2}, we obtain
  \begin{align}\label{eq:cn.3}
    \frac{\mu_{\max}(\wh{\mathcal{B}}_h \wh{A}_h)}{\mu_{\min}(\wh{\mathcal{B}}_h \wh{A}_h)} \lesssim 1 + \sup_{\ul{v}_h \in \wt{\ul{H}}_h} \frac{\|\mathcal{Q}_h \ul{v}_h - \ul{v}_h\|^2_{{\A}_h}}{\|\ul{v}_h\|_{{\A}_h}^2}.
  \end{align}
  
  Let $\ul{v}_h\in\wt{\ul{H}}_h$ and $\ul{w}_h \doteq \mathcal{Q}_h \ul{v}_h - \ul{v}_h \in \wt{\ul{H}}_h$. 
  It readily follows from the linearity of the operator $\mathcal{Q}_h$ that $\ul{w}_h \in \wt{\ul{H}}_h \cap \wt{\ul{U}}_{h,0}$, i.e., all moments $\{ \Lambda_F \}_{F \in \mathcal{F}_H}$ of $\ul{w}_h$ vanish. Besides, on each subdomain $T \in \mathcal{T}_H$, $\ul{w}_h$ can be decomposed into harmonic face contributions:
  \begin{align}\label{eq:cn.3.5}
    \ul{w}_h(T) = \sum_{F \in \mathcal{F}_H(T)} \theta_{F}^{T}(\ul{w}_h).
  \end{align}
  The numerator in \eqref{eq:cn.3} can then be bounded as follows:
  \begin{align}\label{eq:cn.5}
    \| \mathcal{Q}_h \ul{v}_h - \ul{v}_h \|_{\A_h}^2 = \| \ul{w}_h \|^2_{\A_h} = \sum_{T \in \mathcal{T}_H}^{}  \| \ul{w}_h(T) \|^2_{\A_h,T} \lesssim \sum_{T \in \mathcal{T}_H}^{} \sum_{F \in \mathcal{F}_H(T)} \| \theta^T_F(\ul{w}_h)\|_{\A_h,T}^2,
  \end{align}
  where the last bound relies on \eqref{eq:cn.3.5}, the triangle inequality
  and the fact that, by coarse mesh assumption, $\#\FH(T)\lesssim 1$ for all $T\in\TH$.
  Next, we consider each individual term in the sum. For any $F \in \FH(T)$, denoting by $T'$ the neighbouring subdomain sharing $F$ and recalling that $\ul{w}_h=\mathcal Q_h\ul{v}_h-\ul{v}_h=\wh{\mathcal{E}}_h W_h\ul{v}_h-\ul{v}_h$, the definition \eqref{def:weightingOp} of the weighting operator implies that, for any $\alpha_F \in \mathbb{R}$,
  \begin{align*}
    \tr_{\partial T}(\theta^T_F(\ul{w}_h)) &= \frac{1}{2}\tr_{\partial T}\left(\theta^T_F\left( \ul{v}_h(T')|_F - \ul{v}_h(T)|_F\right) \right) \\
    &= \frac{1}{2}\tr_{\partial T}\left(\theta^T_F\left( (\ul{v}_h(T')|_F- \alpha_F)  - (\ul{v}_h(T)|_F - \alpha_F)\right) \right).
  \end{align*}
 We infer the following bound:
    \begin{align}
     \norm{\A_h,T}{\theta^T_F(\ul{w}_h)}^2 
    &\lesssim   \seminorm{1/2,h,\partial T}{\tr_{\partial T}(\theta^T_F(\ul{w}_h))}^2 \nonumber \\
    &
    \lesssim
    \seminorm{1/2,h,\partial T}{\tr_{\partial T}(\theta^T_F ( \ul{v}_h(T')|_F -\alpha_F))}^2+
    \seminorm{1/2,h,\partial T}{\tr_{\partial T}(\theta^T_F (  \ul{v}_h(T)|_F - \alpha_F ))}^2,
  \label{eq:bbb.7}
  \end{align} 
where the first inequality follows from the norm equivalence \eqref{eq:norm.equivalence.Ah} and the discrete lifting property \eqref{eq:lifting} (which yield, since the harmonic extension minimises the energy norm, $\norm{\A_h,T}{\theta^T_F(\ul{w}_h)}\le \norm{\A_h,T}{\mathcal L_{h,T}(\tr_{\partial T}(\theta^T_F(\ul{w}_h)))}\lesssim \seminorm{1/2,h,\partial T}{\tr_{\partial T}(\theta^T_F(\ul{w}_h))}$).
Since $\ul{v}_h \in \wt{\ul{H}}_h$, the coarse degrees of freedom $\Lambda_F$ applied to $\ul{v}_h$ are continuous across coarse faces. Therefore, since the mean value belongs to $\Lambda_F$ by assumption, we can choose $\alpha_F \doteq \int_F \tr_F(\ul{v}_h(T)) = \int_F \tr_F(\ul{v}_h(T'))$. 
  This choice allows us to invoke Corollary \ref{cor:FaceRestrictionOp} for the second term in \eqref{eq:bbb.7} and Corollary \ref{cor:moving.between.subdomains} for the first term to the function $\ul{v}_h(T')-\alpha_F$. Note that $w_{T,F}$ defined in Corollary \ref{cor:moving.between.subdomains} is then equal to $\tr_{\partial T}(\theta^T_F (  \ul{v}_h(T')|_F - \alpha_F ))$. This leads to
  \begin{align*}
    \norm{\A_h,T}{\theta^T_F(\ul{w}_h)}^2 \lesssim \left(1 + \ln\frac{H}{h}\right)^2 ( \seminorm{1,h,T'}{\ul{v}_h(T')}^2 + \seminorm{1,h,T}{\ul{v}_h(T)}^2).
  \end{align*} 
Combining \eqref{eq:cn.5}, the above estimate and the norm equivalence \eqref{eq:norm.equivalence.Ah}, we obtain
  \begin{align}\label{eq:cn.4}
    \norm{\A_h}{\mathcal{Q}_h \ul{v}_h - \ul{v}_h }^2 &\lesssim \sum_{T \in \mathcal{T}_H}^{} \sum_{F \in \mathcal{F}_H(T)} \left(1 + \ln\frac{H}{h}\right)^2 (\norm{\A_h,T'}{\ul{v}_h(T')}^2 + \norm{\A_h,T}{\ul{v}_h(T)}^2) \nonumber\\
    &\lesssim \left(1 + \ln\frac{H}{h} \right)^2 \norm{\A_h}{\ul{v}_h}^2.
  \end{align}
  The proof is concluded by combining \eqref{eq:cn.3} and \eqref{eq:cn.4}.

\section{Application to discontinuous skeletal methods}\label{sec:applications}

In this section, we present numerical experiments to validate the theoretical results from the previous sections. We divide this section into three subsections. In Section \ref{sec:disc.skel.methods}, we present discontinuous skeletal methods, specifically \ac{hdg} and \ac{hho}, along with their mixed-order variants. Section \ref{sec:Truncation.NumExp} presents numerical experiments to validate the truncation estimates in Theorem \ref{thm:TruncationEst}. Finally, in Sections \ref{sec:WeakScalabilityTests} and \ref{sec:jumping.coefficients}, we present the results of weak scalability tests and the case of discontinuous coefficients validating the condition number bounds of the \ac{bddc} preconditioner for discontinuous skeletal methods. All tests are performed on GADI, the high-performance computing facility at the National Computational Infrastructure (NCI), Australia. The code is developed in Julia using the Gridap finite element library \cite{Verdugo-Badia-2022-Gridap,Badia-Verdugo-2020-Gridap}.

\subsection{Discontinuous skeletal methods}\label{sec:disc.skel.methods}
We adopt the general framework developed in \cite{Di_Pietro-Dong-Kanschat-Rupp-2024-Multigrid} to describe discontinuous skeletal methods spanning \ac{hdg} and \ac{hho} methods. To this end, we also need a vector-valued space 
  $$\bs{W}_h \doteq  \bigtimes_{t \in \Th} \left[\mathbb{P}_k(t)\right]^d. $$
We define element-wise local linear operators
$$ \U_t: \Uhcb|_{\partial t} \longrightarrow \Uhi|_t, \;\; \V_t: L^2(t) \longrightarrow \Uhi|_t, \;\; \Q_t: \Uhcb|_{\partial t} \longrightarrow \bs{W}_h|_t.$$
The action of these operators is defined by solving local problems and influences the design of numerical schemes. These local operators are concatenated to form their global counterparts $\U$, $\V$, and $\Q$. It can be shown that the bulk component $u_h$ can be approximated using the skeleton component $u_{\partial \Omega}$ via the transformation
$$u_h \approx \U u_{\partial \Omega} + \V f,$$ 
if $u_{\partial \Omega}$ satisfies the equivalent statically condensed version of the problem $\wh{\A}_h \ul{u}_h = l_h$,
\begin{align}\label{Ph}
	\langle \wh{\A}_h u_{\partial\dom}, \mu \rangle = \int_{\dom} f \U \mu  \qquad \forall \mu \in \Uhcb.
\end{align}
Here,
\begin{align}\label{bf}
	\langle \wh{\A}_h u_{\partial\dom}, \mu \rangle 
	&= \sum_{t \in \Th} \int_t \Q_t u_{\partial t} \cdot \Q_t \mu + s(u_{\partial t}, \mu) \doteq \sum_{t \in \Th} a_t(u_{\partial t}, \mu). 
\end{align}
The bilinear form \eqref{bf} is elliptic and continuous, and the {\it stabilisation} or {\it penalty} term $s(\cdot,\cdot)$ within it is symmetric positive semi-definite. Without loss of generality, we assume homogeneous Dirichlet boundary conditions on $\wh{\U}_h$, and the implementation of \eqref{Ph} results in a symmetric positive-definite matrix system. The choice of spaces, local operators, and stabilisation term completely characterise a discontinuous skeletal method. 

In our numerical experiments, we test the proposed \ac{bddc} preconditioner $\wh{\mathcal{B}}_h$ on the following hybrid methods:

\begin{itemize}
  \item \textbf{HDG method} \cite{Cockburn-Gopalakrishnan-Lazarov-2009-HDG}: Implemented on simplicial and tetrahedral meshes. The local operators $\U_t$ and $\Q_t$ map $u_{\partial t}$ to $u_t \in \Uhi|_t$ and $\bs{q}_t \in \bs{W}_h|_t$ by solving the following local problems:
    \begin{alignat}{2}
      \int_t \bs{q}_t \cdot \bs{p}_t - \int_t u_t \nabla \cdot \bs{p}_t &= - \int_{\partial t} u_f \bs{p}_t \cdot \bs{n} & \qquad\forall \bs{p}_t \in \bs{W}_h|_t, \nonumber \\
      \int_{\partial t} \tr(\bs{q}_t) \cdot \bs{n} - \int_t \bs{q}_t \cdot \nabla v_t &= 0 & \qquad\forall \bs{q}_t \in \Q_h|_t, \nonumber \\ 
      \tr(\bs{q}_t) &= \bs{q}_t + \tau_t (u_t - u_f) \bs{n}. \label{LDGH_fluxtrace}
    \end{alignat}
    Here, $\bs{n}$ denotes the unit outward normal vector to $\partial t$. The stabilisation term in the bilinear form \eqref{bf} is defined as
    \[
      s(u_{\partial t}, \mu) \doteq \tau_t \int_{\partial t} (\U_t u_{\partial t} - u_{\partial t}) (\U_t \mu - \mu).
    \]
    We set the penalty parameter $\tau_t = 1$ in our experiments for this scheme.

  \item \textbf{HDG+ method} \cite{Du-Sayas-2021-HDGplus}: Used for polygonal or polyhedral meshes. This method follows the HDG framework above, but replaces the scalar bulk space with
    \[
      \Uhi \doteq \bigtimes_{t \in \Th} \mathbb{P}_{k+1}(t).
    \]
    The trace of the double-valued vector function in \eqref{LDGH_fluxtrace} is replaced by
    \[
      \tr(\bs{q}_t) = \bs{q}_t + \tau_t (\Pi^k_{\partial t} u_t - u_f) \bs{n},
    \]
    where the penalty parameter is chosen as $\tau_t \simeq h_t^{-1}$. Here, $\Pi^k_{\partial t}: L^2(\partial t) \rightarrow \mathbb{P}_{k}(\mathcal{F}_t)$ denotes the standard $L^2$-orthogonal projection, with $(\Pi^k_{\partial t}(w))|_f = \Pi^k_f(w|_f)$.

  \item \textbf{HHO method} \cite{di-pietro.ern:2015:hybrid}: For this method, the vector-valued bulk space is replaced with
    \[
      \bs{W}_h = \bigtimes_{t \in \Th} \nabla \mathbb{P}_{k+1}(t).
    \]
    The operator $\Q_t$ solves the local potential reconstruction problem as defined in \cite[(3.8)--(3.9)]{Di_Pietro-Dong-Kanschat-Rupp-2024-Multigrid}, and $\U_t$ solves the local problem described in \cite[(3.11)]{Di_Pietro-Dong-Kanschat-Rupp-2024-Multigrid}. We use the original HHO element-wise stabiliser proposed in \cite[(22)]{DiPietro-Ern-Lemaire-2014-HHOstablisation} and \cite[(2.22)]{di-pietro.droniou:2020:hybrid}. 

  \item \textbf{Mixed-order HHO method} \cite{Cicuttin-Ern-Pignet-2021-HHO-Book}: In this variant, the scalar bulk space is replaced with
    \[
      \Uhi \doteq \bigtimes_{t \in \Th} \mathbb{P}_{k+1}(t).
    \]
    The rest of the framework remains the same as for the HHO method above, except for the element-wise stabilisation term (which gives, after static condensation, the cell contribution to $s$ in \eqref{bf}), which is chosen as
    \[
      h_t^{-1} \int_{\partial t} (\Pi^k_{\partial t}(u_t|_{\partial t} - u_{\partial t})) (\Pi^k_{\partial t}(v_t|_{\partial t} - v_{\partial t})).
    \]
\end{itemize}

Regarding Assumption \ref{assum:norm.equivalence.Ah} for these methods, we refer to \cite[Lemma 2.18 and Assumption 5.9]{di-pietro.droniou:2020:hybrid} for HHO methods, and to \cite{Cockburn.Di-Pietro.ea:15} for HDG methods.

\subsection{Verification of truncation estimates}\label{sec:Truncation.NumExp}

In order to verify the truncation estimate in Theorem \ref{thm:TruncationEst}, we begin by defining the operator $R_{\Gamma}$, which performs the truncation to  $\Gamma \subset \partial \Omega$ of boundary functions as defined in \eqref{bddc.wf}. We have proved that, for any $\ul{v}_h \in \Uh$ with zero mean value on $\Gamma$, 
\begin{equation}\label{eq:RGamma.1}
\tnorm{1/2,h}{R_{\Gamma}({\tr(\ul{v}_h)})} \lesssim \left( 1 + \ln \left( \frac{\mbox{diam}(\dom)}{h} \right)\right) \seminorm{1,h}{\ul{v}_h}.
\end{equation}
Since the discrete harmonic extension $\mathcal E_h$ preserves the boundary values and minimises the discrete $H^1(\dom)$-seminorm, \eqref{eq:RGamma.1} is equivalent to
\begin{equation}\label{eq:RGamma.2}
\tnorm{1/2,h}{R_{\Gamma}(w_h)} \lesssim \left( 1 + \ln \left( \frac{\mbox{diam}(\dom)}{h} \right)\right) \seminorm{1,h}{\mathcal E_h(w_h)}\qquad
\forall w_h\in \Uhbb\text{ s.t. }\int_\Gamma w_h=0.
\end{equation}
Furthermore, using Theorems \ref{thm:trace} and \ref{thm:lifting} we can easily obtain the following seminorm equivalence:
\begin{align}\label{eq:equivalence}
  \seminorm{1,h}{\mathcal{E}_h(w_h)} \lesssim \tnorm{1/2,h}{w_h} \lesssim \seminorm{1,h}{\mathcal{E}_h(w_h)} \qquad \forall w_h \in \Uhbb.
\end{align}
Let $\HhalfOp$ denote the Gram matrix associated with discrete trace seminorm, $\boldsymbol{S} \doteq [\int_{\partial\dom} \phi_a]_{1 \leq a \leq \mbox{dim}(\Uhbb)}$ be a row vector, $\phi_a$ denoting the shape functions of the boundary space $\Uhbb$, and $\hat{\boldsymbol{w}}$ and $\boldsymbol{R}_\Gamma$ be the vector representation of $\hat{w}_h \in \Uhbb$ and the restriction operator, respectively.  Then the relation \eqref{eq:equivalence} shows that \eqref{eq:RGamma.2} is equivalent to
\begin{align}
  \left( 1 + \ln \left( \frac{\mbox{diam}(\dom)}{h} \right)\right) & \gtrsim
   \max_{\substack{w_h \in \Uhbb, \\ \int_{\Gamma} w_h = 0}} \frac{\tnorm{1/2,h}{R_{\Gamma}(w_h)}}{\seminorm{1/2,h}{w_h}}\nonumber \\ 
   & = 
   \max_{\substack{\hat{w}_h \in \Uhbb}} \frac{\tnorm{1/2,h}{R_{\Gamma}(\hat{w}_h - \int_\Gamma \hat{w}_h)}}{\big(\seminorm{1/2,h}{\hat{w}_h}^2 + (\int_\Gamma \hat{w}_h)^2\big)^{1/2}}\nonumber \\ 
   & = \left(\max_{\hat{\boldsymbol{w}}} \frac{\hat{\boldsymbol{w}}^\top  (I - \boldsymbol{1} \boldsymbol{S}_{\Gamma})^\top \boldsymbol{R}_{\Gamma}^{\top} \HhalfOp  \boldsymbol{R}_{\Gamma}(I - \boldsymbol{1} \boldsymbol{S}_{\Gamma}) \hat{\boldsymbol{w}}}{ \hat{\boldsymbol{w}}^\top (\HhalfOp + \boldsymbol{S}_{\Gamma}^\top \boldsymbol{S}_{\Gamma}) \hat{\boldsymbol{w}}}\right)^{1/2}.
\label{eq:truncation.eigenvalue.bound}
\end{align}
The maximum is taken over non-constant boundary functions and, in the second line, we have used the fact that if $\hat{w}_h\in\Uhbb$ and $w_h\doteq\hat{w}_h - \frac{1}{|\Gamma|_{d-1}}\int_\Gamma \hat{w}_h$, then $\int_\Gamma w_h=0$ and $\seminorm{1/2,h}{w_h}=\seminorm{1/2,h}{\hat{w}_h}\le \big(\seminorm{1/2,h}{\hat{w}_h}^2 + (\int_\Gamma \hat{w}_h)^2\big)^{1/2}$. The bound \eqref{eq:truncation.eigenvalue.bound} can be checked by verifying the asymptotic trend as $h\to 0$ of the maximum eigenvalue of
\begin{align}\label{opr}
  (\HhalfOp + \boldsymbol{S}^\top_\Gamma \boldsymbol{S}_\Gamma)^{-1} (I - \boldsymbol{1} \boldsymbol{S}_{\Gamma})^\top \boldsymbol{R}_{\Gamma}^\top \HhalfOp  \boldsymbol{R}_{\Gamma} (I - \boldsymbol{1} \boldsymbol{S}_{\Gamma}).
\end{align}
To numerically assess the behaviour of this eigenvalue, we consider $\Omega=(0,1)^2$ and uniformly discretised Cartesian meshes. We write $\partial\dom$ as a disjoint union of its top $(\Gamma_T)$, bottom $(\Gamma_B)$, left $(\Gamma_L)$ and right $(\Gamma_R)$ parts. In Figure \ref{fig:TruncationEigenvalues}, we compare the maximum eigenvalues of \eqref{opr} against the log of the mesh size $h$ associated with the hybrid discretisation with a planar $\Gamma = \Gamma_T$ and a non-planar $\Gamma = \Gamma_T \cup \Gamma_R$, respectively. 
This figure clearly show that this maximum eigenvalue seems to exactly behave as (and not just be bounded above by) a linear function of $|\ln h|$. In this example on Cartesian meshes, the maximum eigenvalue appears to be independent of the polynomial order $k$; we have not tracked the dependence of the constants on $k$ in the truncation estimate and cannot claim this in general.

\begin{figure}[htbp]
  \centering
  \begin{subfigure}[b]{0.48\linewidth}
    \centering
    \includegraphics[width=\linewidth]{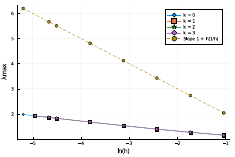}
    \caption{$\Gamma = \Gamma_T$}
    \label{fig:TruncationEigenvalues_planar}
  \end{subfigure}
  \hfill
  \begin{subfigure}[b]{0.48\linewidth}
    \centering
    \includegraphics[width=\linewidth]{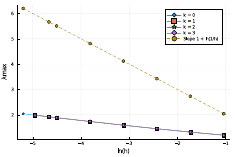}
    \caption{$\Gamma = \Gamma_T \cup \Gamma_R$}
    \label{fig:TruncationEigenvalues_non_planar}
  \end{subfigure}
  \caption{{Maximum} eigenvalues of \eqref{opr} versus $\log(h)$ for polynomial orders $0, 1, 2$ and $3$ with planar and non-planar $\Gamma$.}
  \label{fig:TruncationEigenvalues}
\end{figure}

\subsection{Weak scalability tests}\label{sec:WeakScalabilityTests}

The \ac{bddc} preconditioner exhibits a high degree of parallelism. Combined with the fact that its condition number bounds are independent of the size of the global system, this makes it a truly weakly scalable algorithm in practice. Readers interested in the computational details of its implementation in distributed-memory environments are referred to \cite{Badia-SIAMJSciComput-2016h}. We note that the implementation of the \ac{bddc} preconditioner for hybrid methods is analogous to that for FEM, and the discussion in this reference applies verbatim to the hybrid case.

As a benchmark, we consider the Poisson equation with unit diffusion coefficient, homogeneous Dirichlet boundary conditions, and the following source term:
$$
f(x,y) = -\Delta(\sin(2 \pi x)  \sin(2 \pi y) x (x-1) y (y-1))
$$
on the unit $d$-cube $\left[0,1\right]^d$. We consider two families of meshes: simplicial and polygonal. Each mesh is partitioned into subdomains, with each subdomain assigned to a single processor. We conduct weak scalability tests, where both the global problem size and the number of subdomains (processors) are increased proportionally, keeping the local subdomain problem size fixed.

For a given number of processors $N_p = \prod_{i=1}^d N_p^i$, we construct a Cartesian mesh with $N_c = \prod_{i=1}^d N_c^i$ cells, where $N_c^i = N_p^i \, H/h$ for each $i \in \{1, \dots, d\}$. The simplicial mesh is obtained by subdividing each Cartesian cell into $d!$ simplices, resulting in $N_{c,\mathrm{simplex}} = d! \, N_c$ cells. The polygonal mesh is generated by computing the Voronoi tessellation of the simplexified mesh, associating each Voronoi cell with a Cartesian node, so that $N_{c,\mathrm{voronoi}} = \prod_{i=1}^d (N_c^i + 1)$. Cell ownership in both the simplicial and polygonal meshes is determined by the ownership of the corresponding Cartesian entity (cell or node). Figure~\ref{fig:meshes} illustrates examples of simplicial and polygonal meshes with their subdomain partitions for $d=2$, visualised using ParaView.

We solve the linear systems using the FGMRES iterative solver \cite{Saad-1993-FGMRES}, preconditioned with our \ac{bddc} implementation in GridapSolvers \cite{Manyer-2024-GridapSolvers}. FGMRES is used for implementation convenience, but conjugate gradient could be used as well, since the condensed system matrix is symmetric positive definite. Iterations are performed until the relative residual is reduced below $10^{-8}$, and we record the number of iterations required for convergence.

For $d=2$, we consider $N_p = 6, 12, 24, 48, 96, 192, 384, 768$ processors, and test local problem sizes $H/h = 8$ and $H/h = 16$. Figures~\ref{fig:WeakScalabilityTestsSimplicial} and~\ref{fig:WeakScalabilityTestsPolytopal} report the number of FGMRES iterations for simplicial and polygonal meshes, respectively. For $d=3$, we use $N_p = 8, 16, 32, 64, 128, 256, 512, 768, 1536, 3072$ and $H/h = 4$, with results shown in Figure~\ref{fig:WeakScalabilityTests3D} for tetrahedral meshes.

The results demonstrate that, for fixed $H/h$, the number of iterations remains essentially constant as $N_p$ increases, confirming the weak scalability and the theoretical condition number bounds established in Theorem~\ref{thm:bddc}. We also observe a mild increase in iteration counts as $H/h$ grows, in agreement with the theoretical predictions. Note that the number of global DoFs is the number of processors times the number of DoFs per processor, which is fixed in each plot. Thus, the plots effectively also show how the number of iterations depend on the total number of DoFs in a weak scalability test.

\begin{figure}[!htbp]
  \centering
  \begin{subfigure}[b]{0.3\textwidth}
      \centering
      \includegraphics[width=\textwidth]{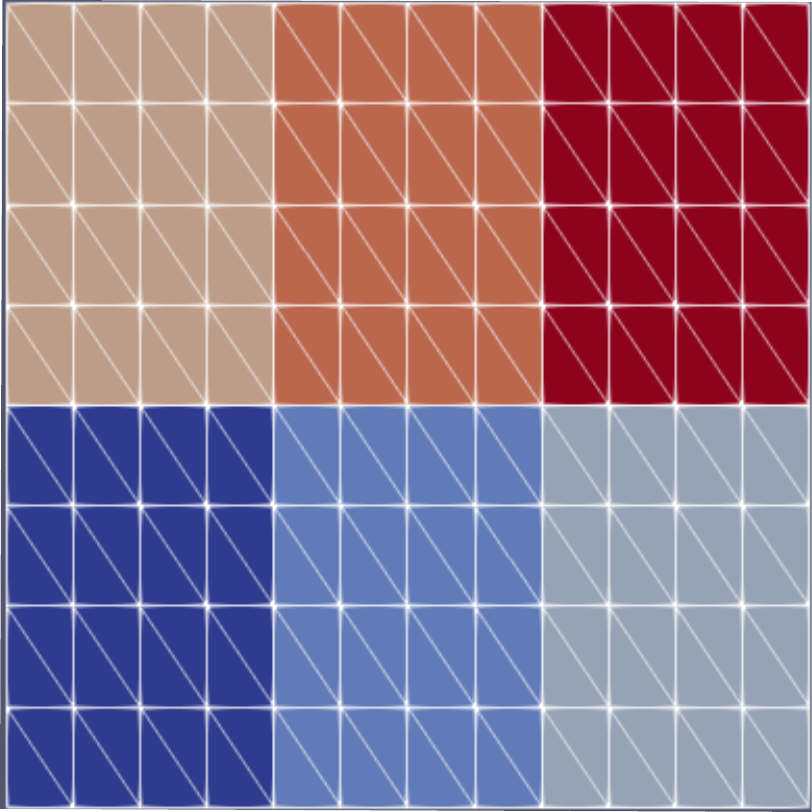}
  \end{subfigure}
  \hfill
  \begin{subfigure}[b]{0.3\textwidth}
      \centering
      \includegraphics[width=\textwidth]{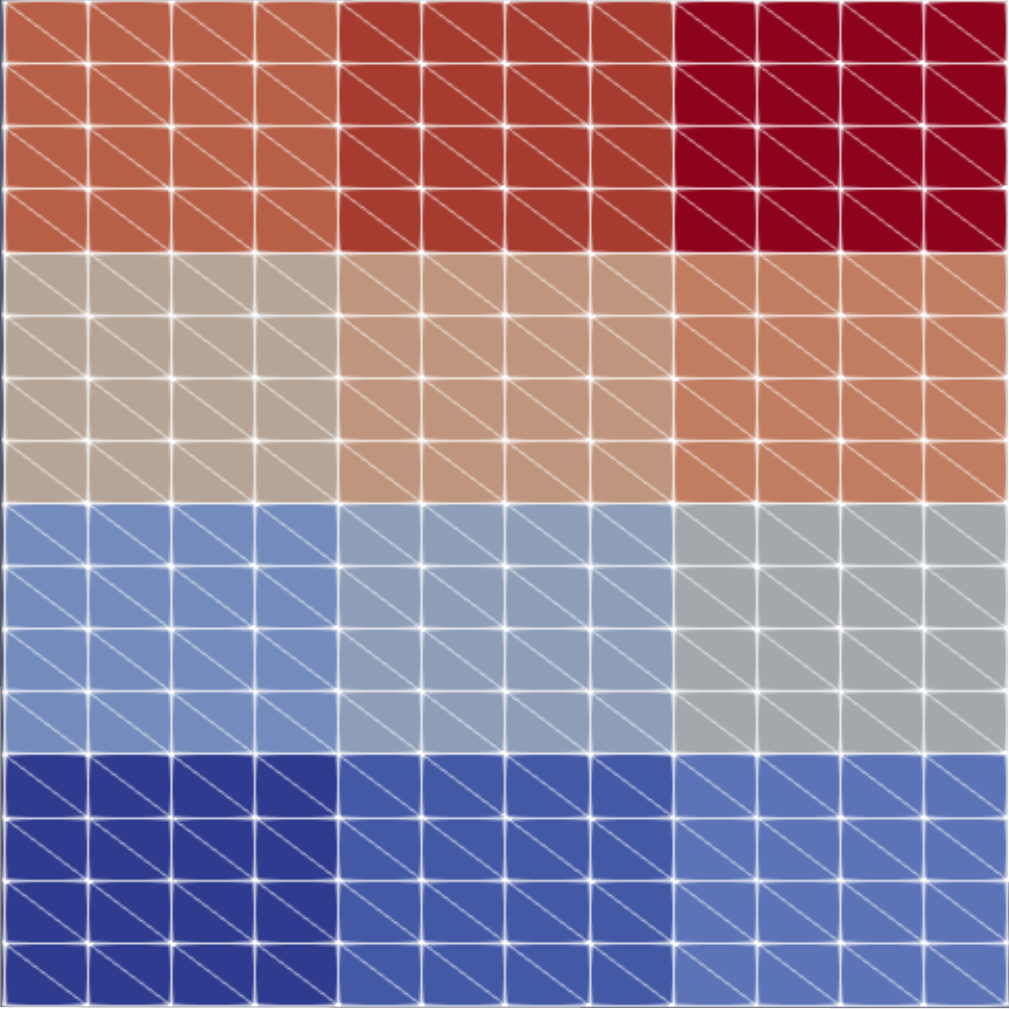}
  \end{subfigure}
  \hfill
  \begin{subfigure}[b]{0.3\textwidth}
      \centering
      \includegraphics[width=\textwidth]{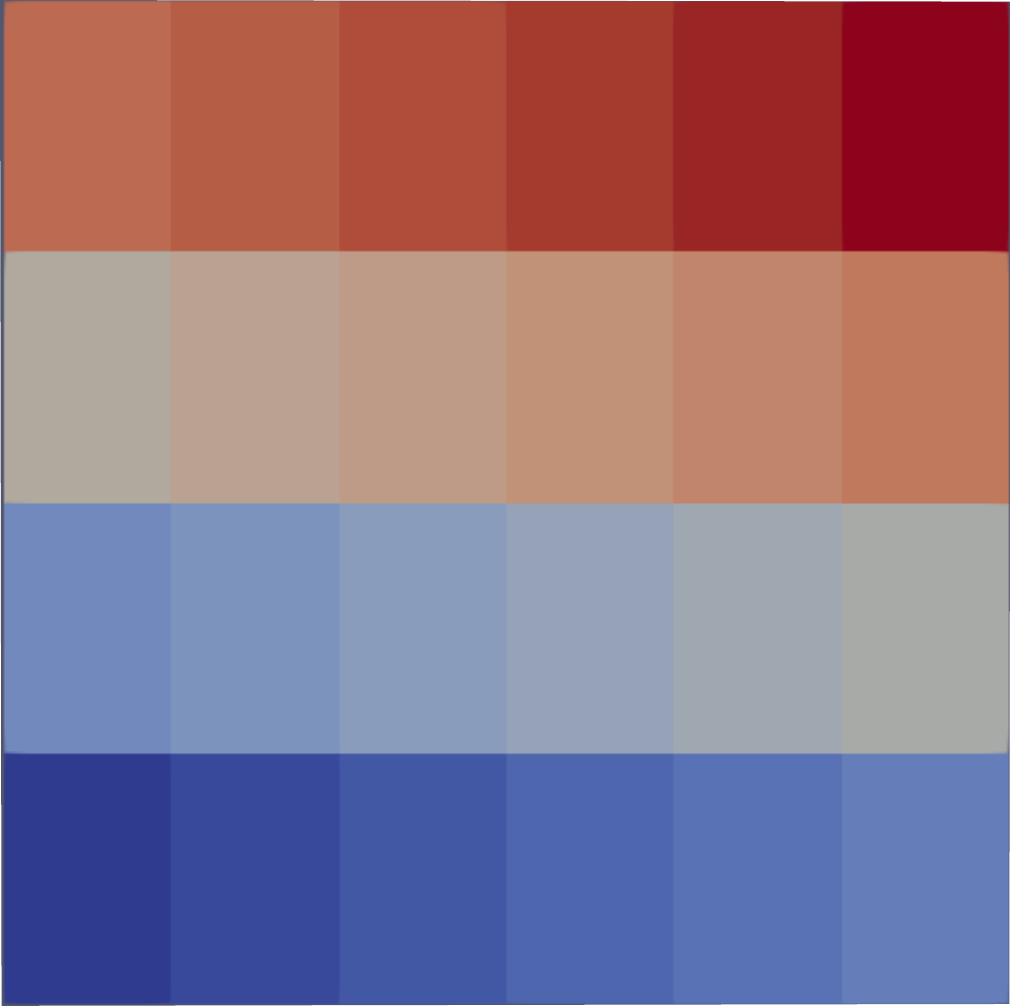}
  \end{subfigure}
  \vskip\baselineskip
  \begin{subfigure}[b]{0.3\textwidth}
      \centering
      \includegraphics[width=\textwidth]{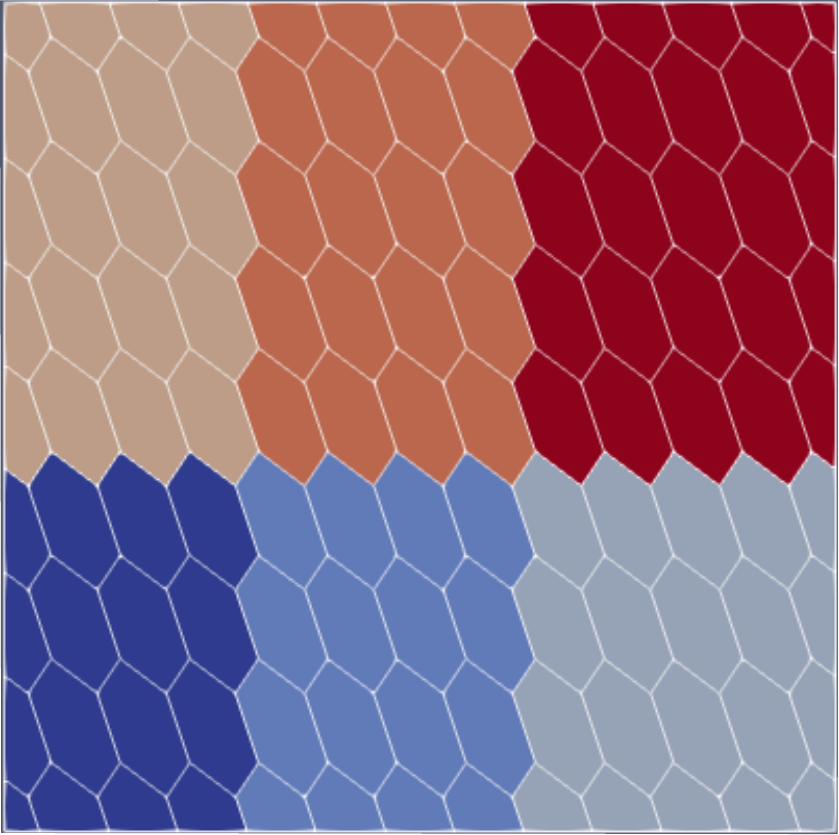}
  \end{subfigure}
  \hfill
  \begin{subfigure}[b]{0.3\textwidth}
      \centering
      \includegraphics[width=\textwidth]{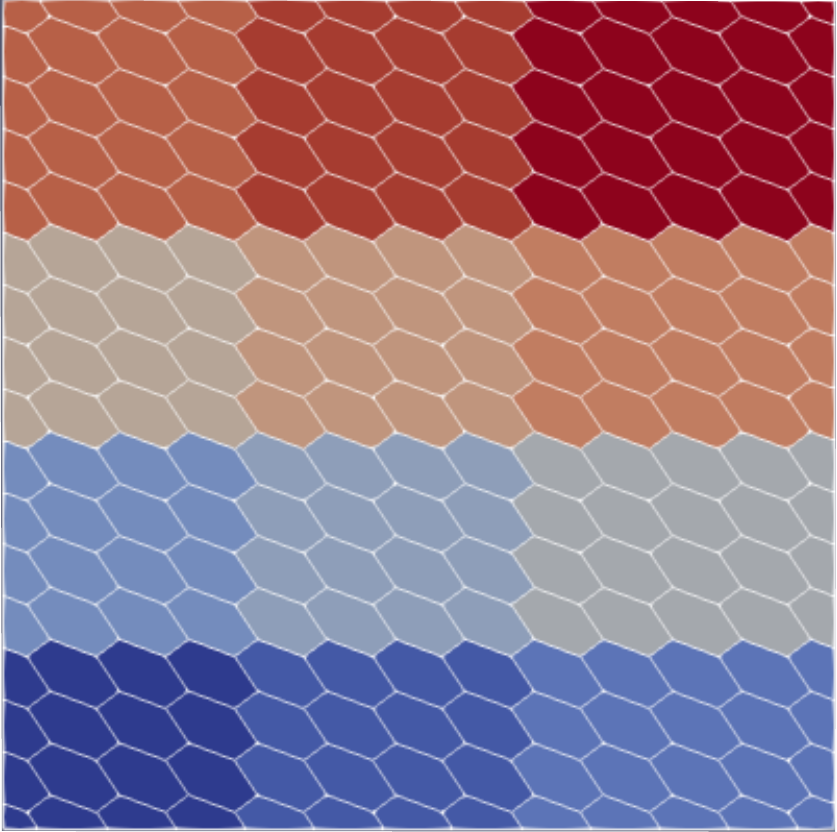}
  \end{subfigure}
  \hfill
  \begin{subfigure}[b]{0.3\textwidth}
      \centering
      \includegraphics[width=\textwidth]{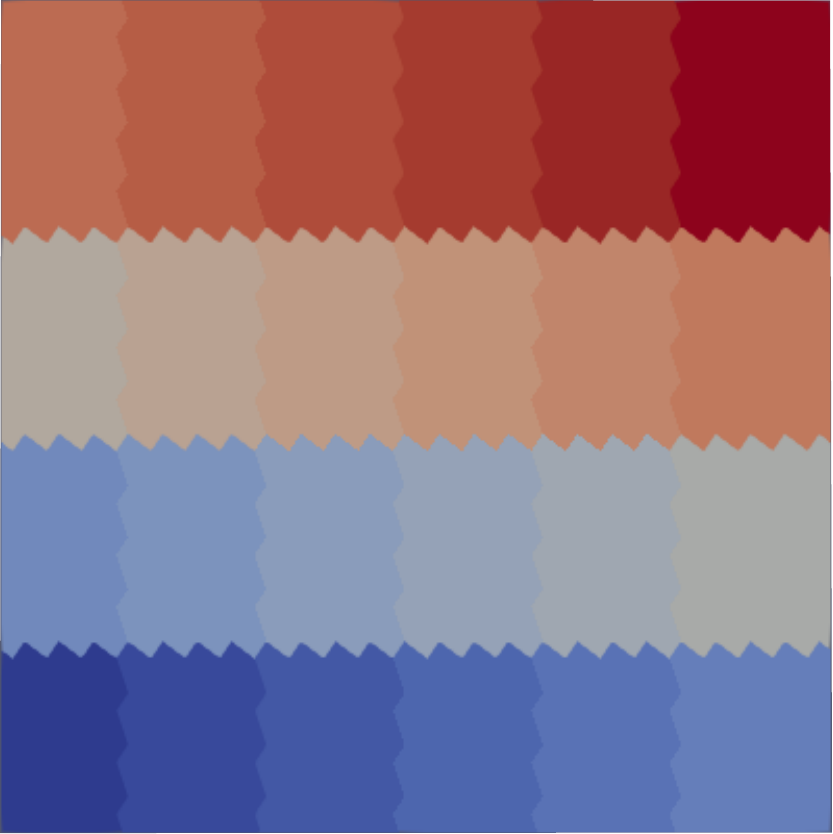}
  \end{subfigure}
  \caption{Simplicial and polygonal meshes with 6, 12, and 24 subdomains.}
  \label{fig:meshes}
\end{figure}

\begin{figure}[!htbp]
  \centering
  \begin{subfigure}[b]{\textwidth}
      \centering
      \includegraphics[width=0.45\textwidth]{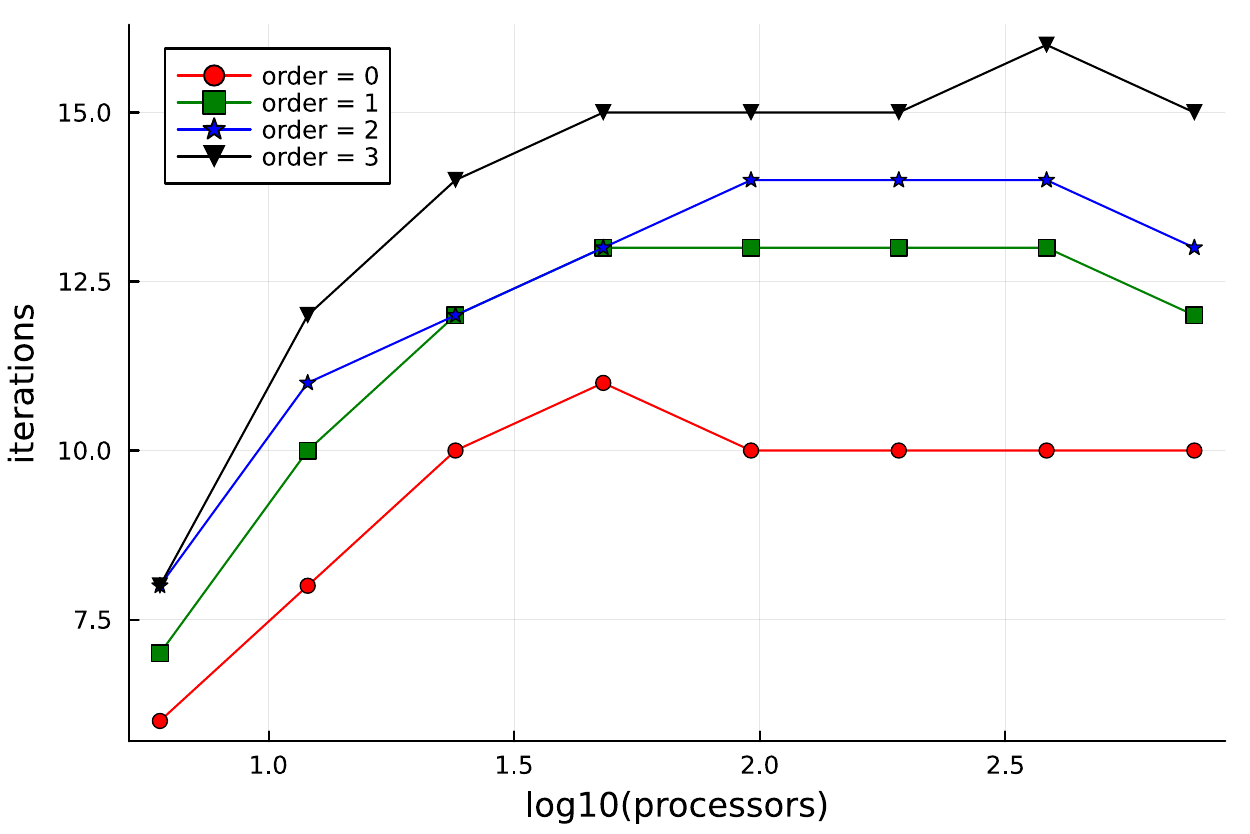}
      \includegraphics[width=0.45\textwidth]{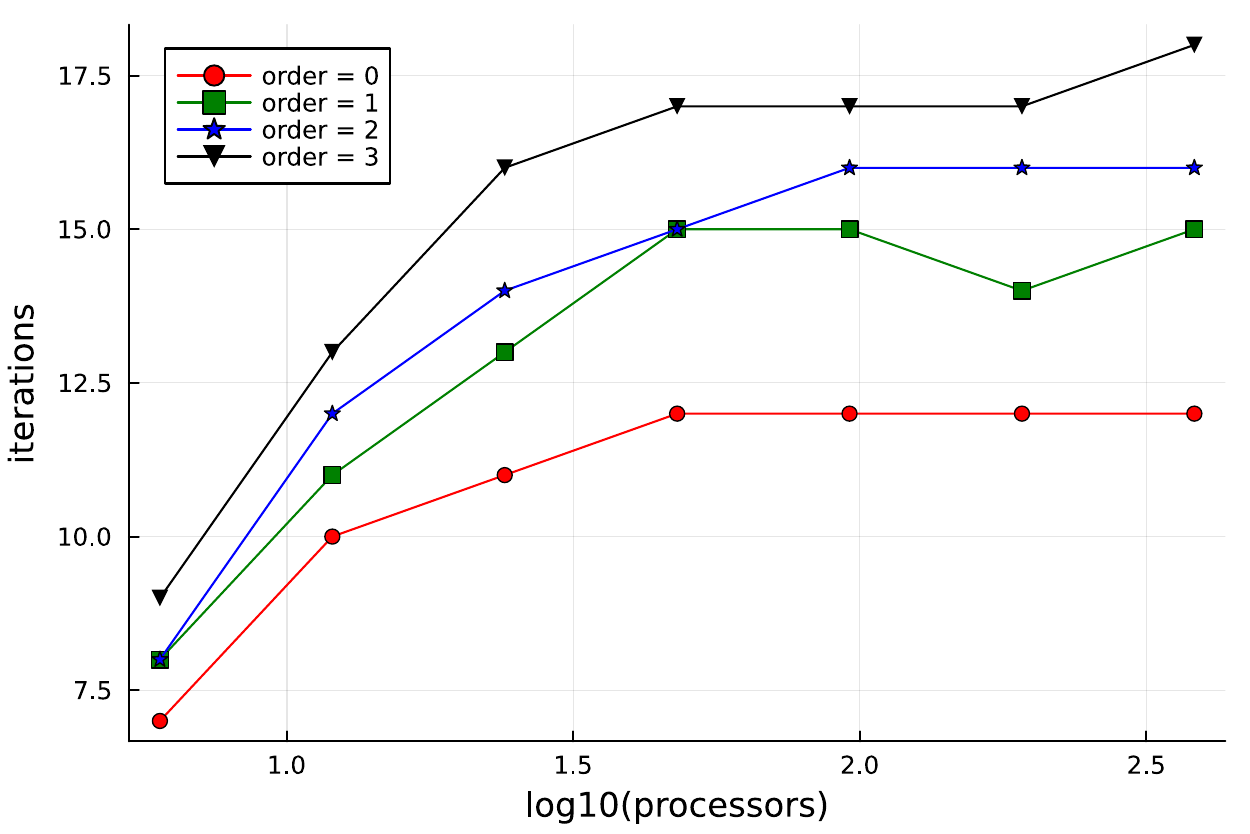}
      \caption{\ac{hdg} on simplicial meshes, for $H/h = 8$ (left) and $H/h = 16$ (right).}
  \end{subfigure}
  \vskip\baselineskip
  \begin{subfigure}[b]{\textwidth}
    \centering
    \includegraphics[width=0.45\textwidth]{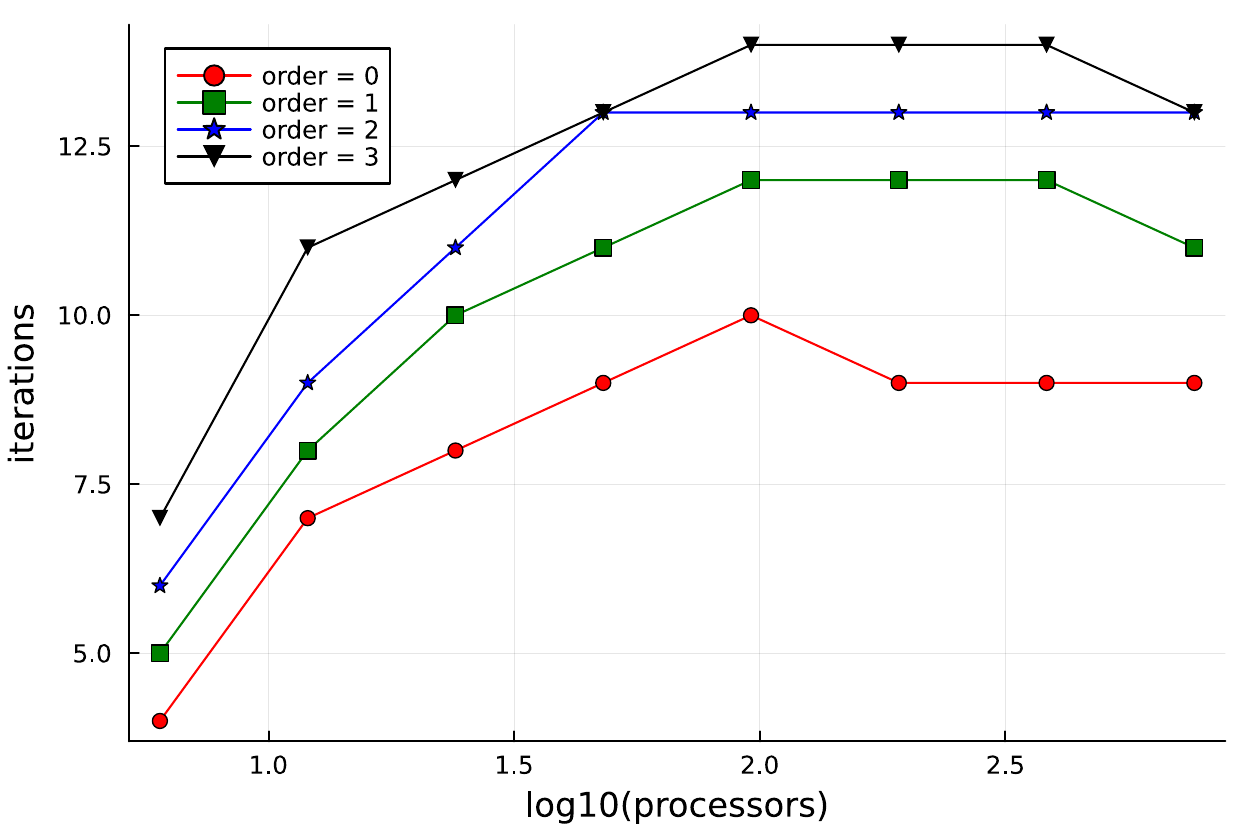}
    \includegraphics[width=0.45\textwidth]{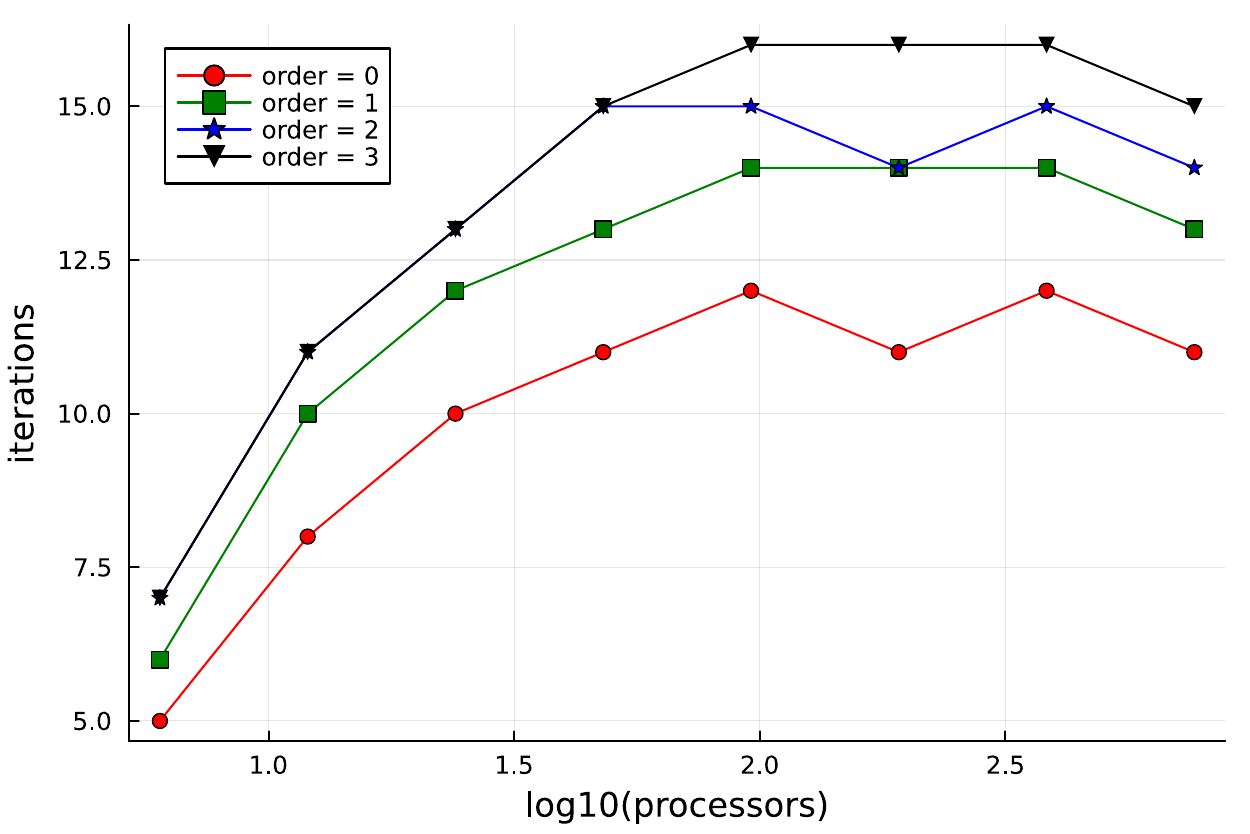}
    \caption{\ac{hho} on simplicial meshes, for $H/h = 8$ (left) and $H/h = 16$ (right).}
  \end{subfigure}
  \vskip\baselineskip
  \begin{subfigure}[b]{\textwidth}
    \centering
    \includegraphics[width=0.45\textwidth]{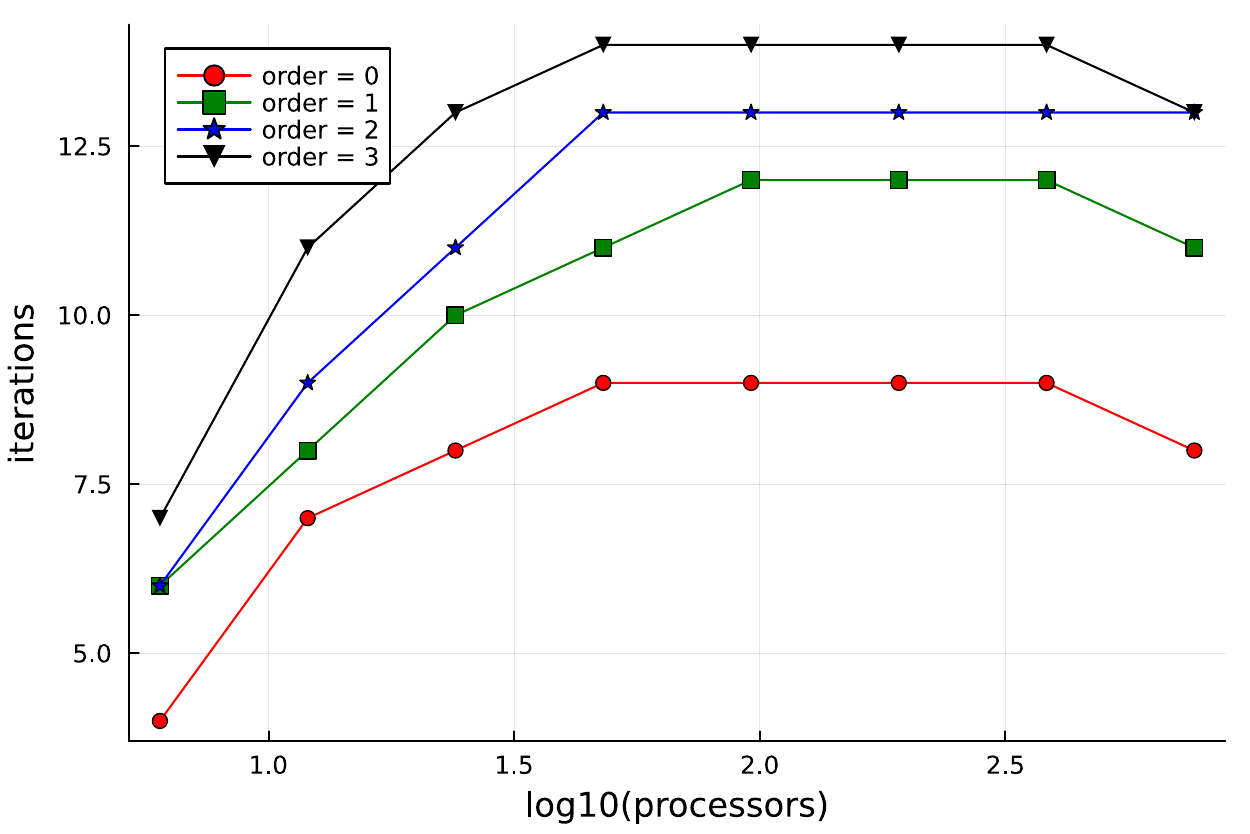}
    \includegraphics[width=0.45\textwidth]{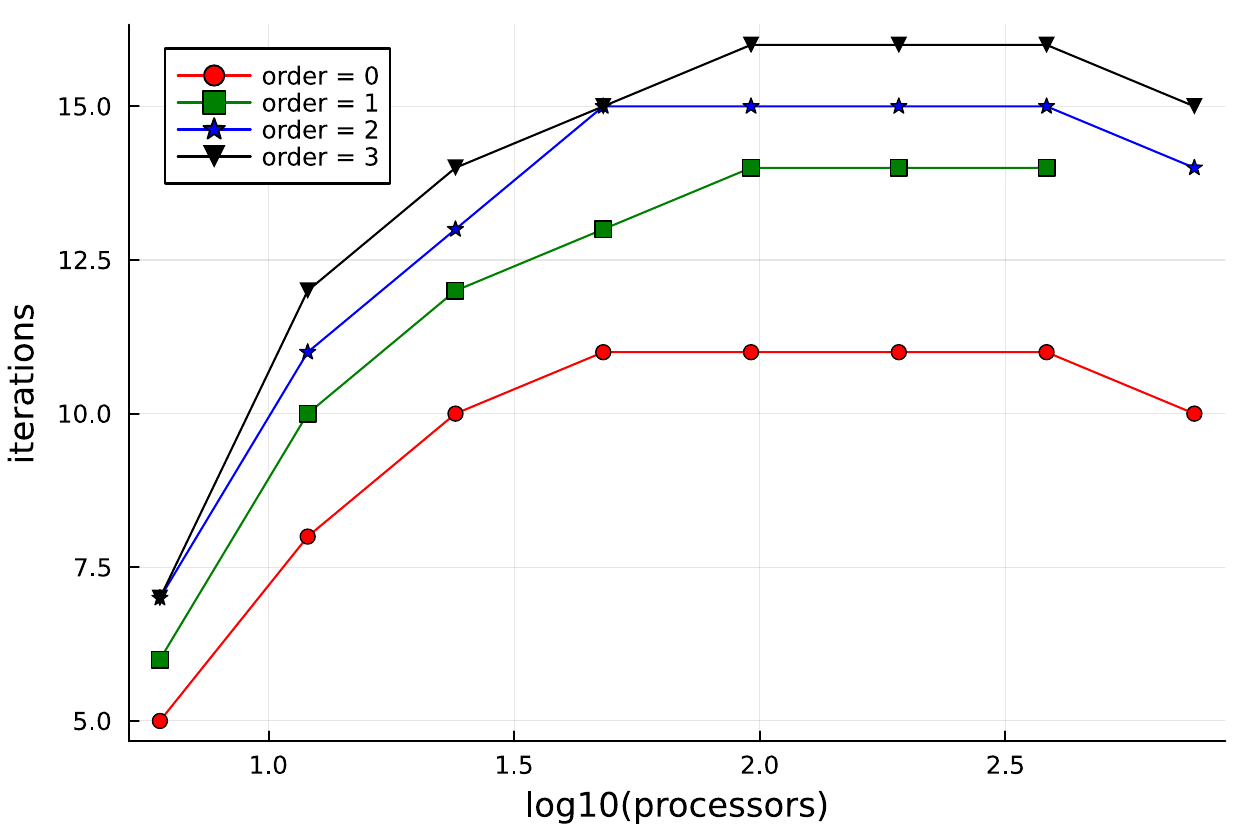}
    \caption{Mixed-order \ac{hho} on simplicial meshes, for $H/h = 8$ (left) and $H/h = 16$ (right).}
  \end{subfigure}
  \caption{Number of processors versus number of FGMRES iterations for different hybrid methods on simplicial meshes and orders $0,1,2,3$.}
  \label{fig:WeakScalabilityTestsSimplicial}
\end{figure}

\begin{figure}[!htbp]
  \centering
  \begin{subfigure}[b]{\textwidth}
    \centering
    \includegraphics[width=0.45\textwidth]{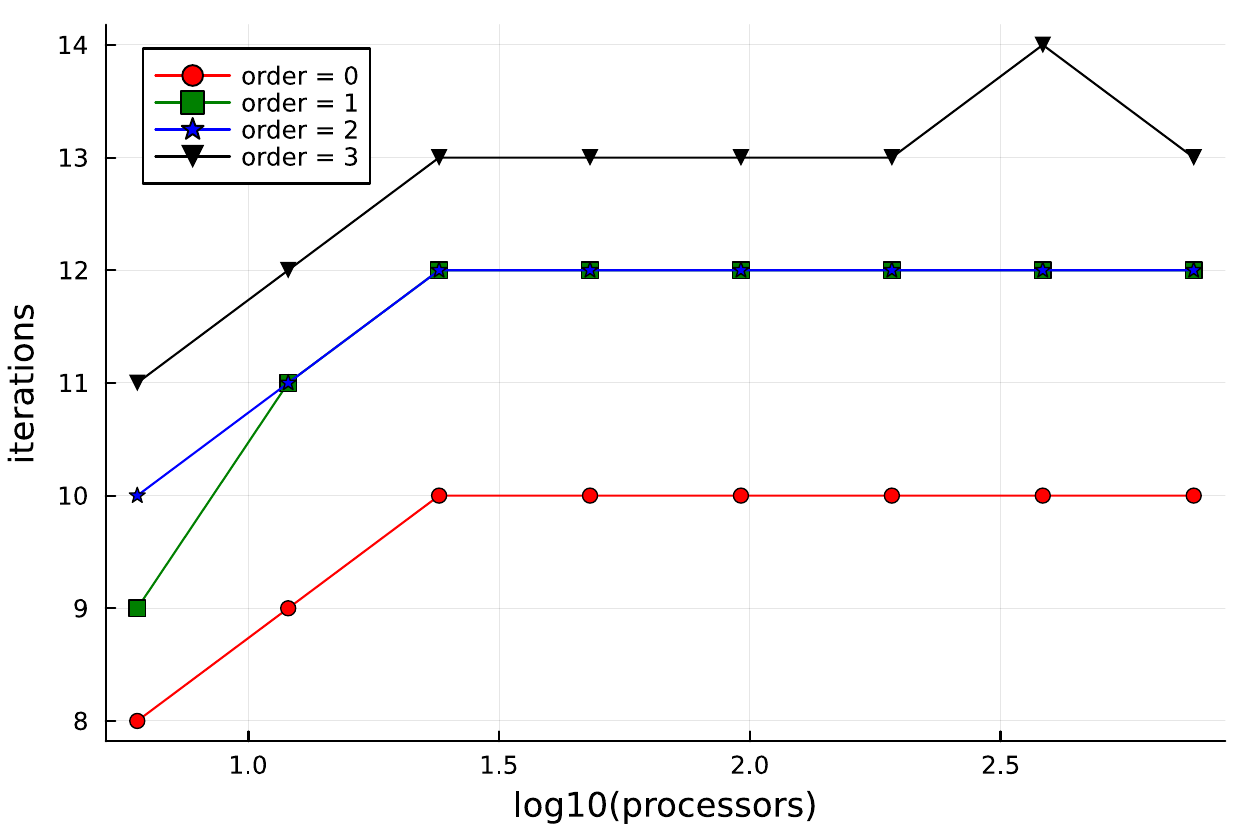}
    \includegraphics[width=0.45\textwidth]{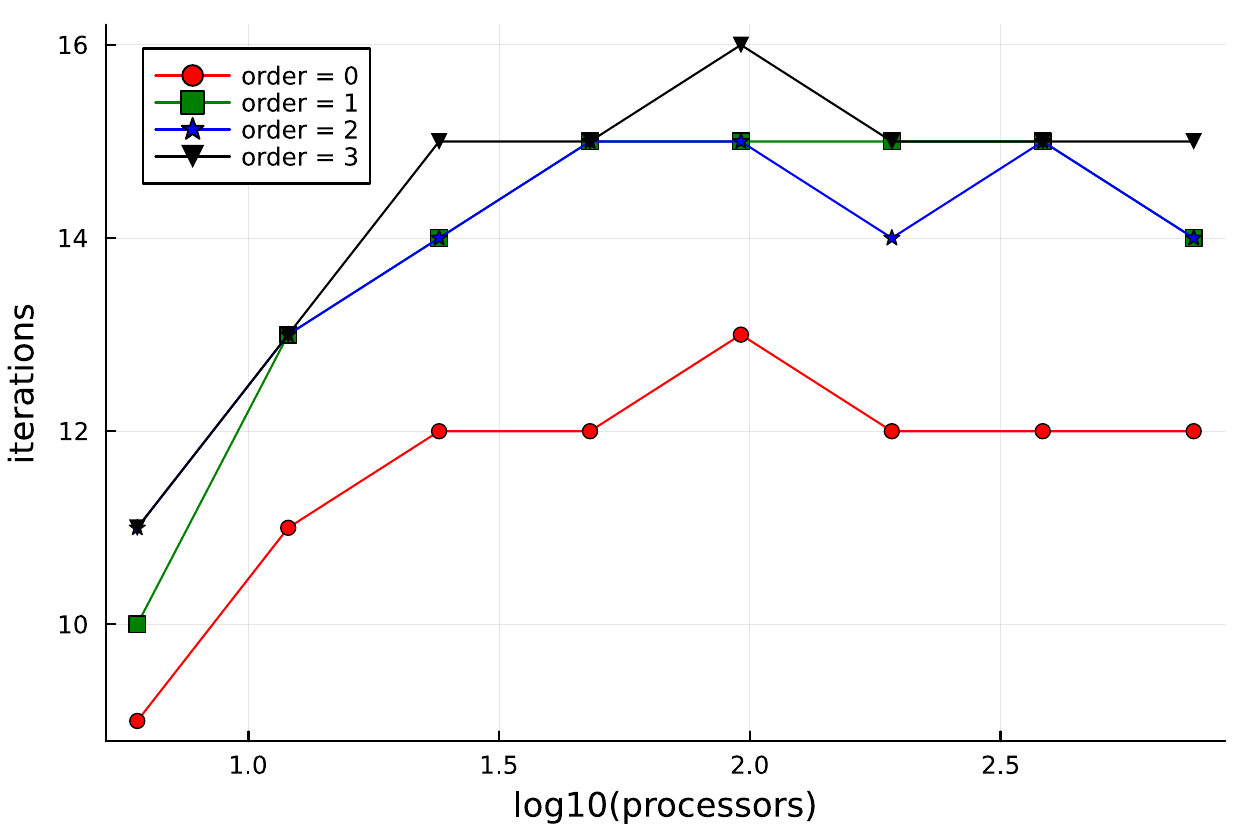}
    \caption{HDG+ on polygonal meshes, for $H/h = 8$ (left) and $H/h = 16$ (right).}
  \end{subfigure}
  \vskip\baselineskip
  \begin{subfigure}[b]{\textwidth}
    \centering
    \includegraphics[width=0.45\textwidth]{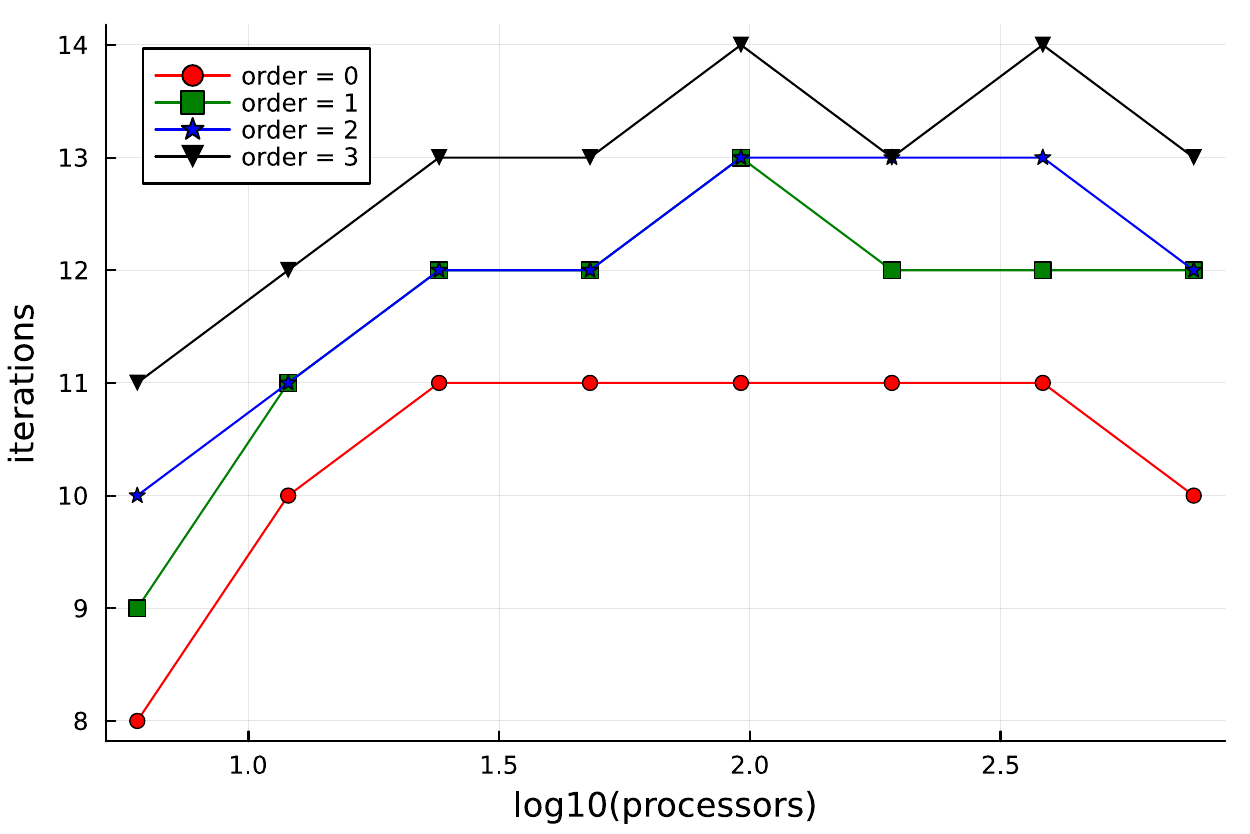}
    \includegraphics[width=0.45\textwidth]{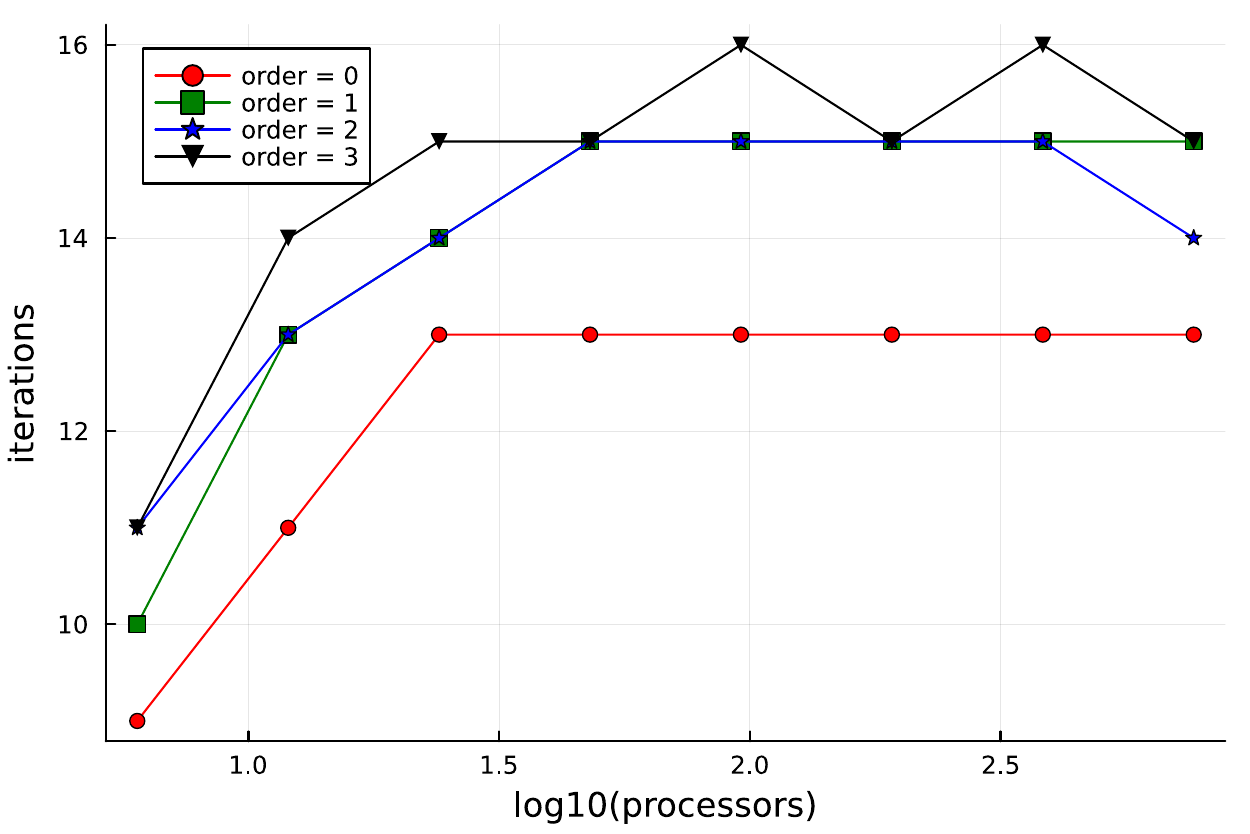}
    \caption{\ac{hho} on polygonal meshes, for $H/h = 8$ (left) and $H/h = 16$ (right).}
  \end{subfigure}
  \vskip\baselineskip
  \begin{subfigure}[b]{\textwidth}
    \centering
    \includegraphics[width=0.45\textwidth]{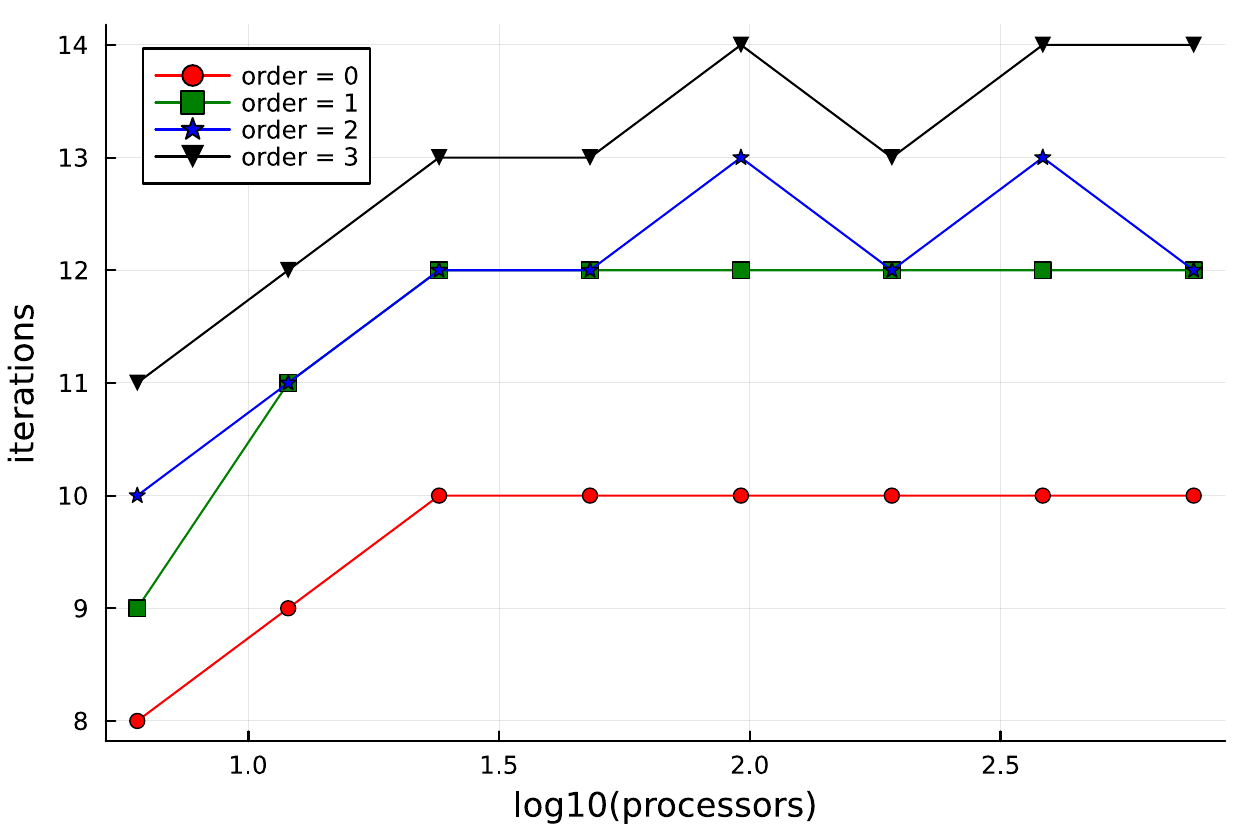}
    \includegraphics[width=0.45\textwidth]{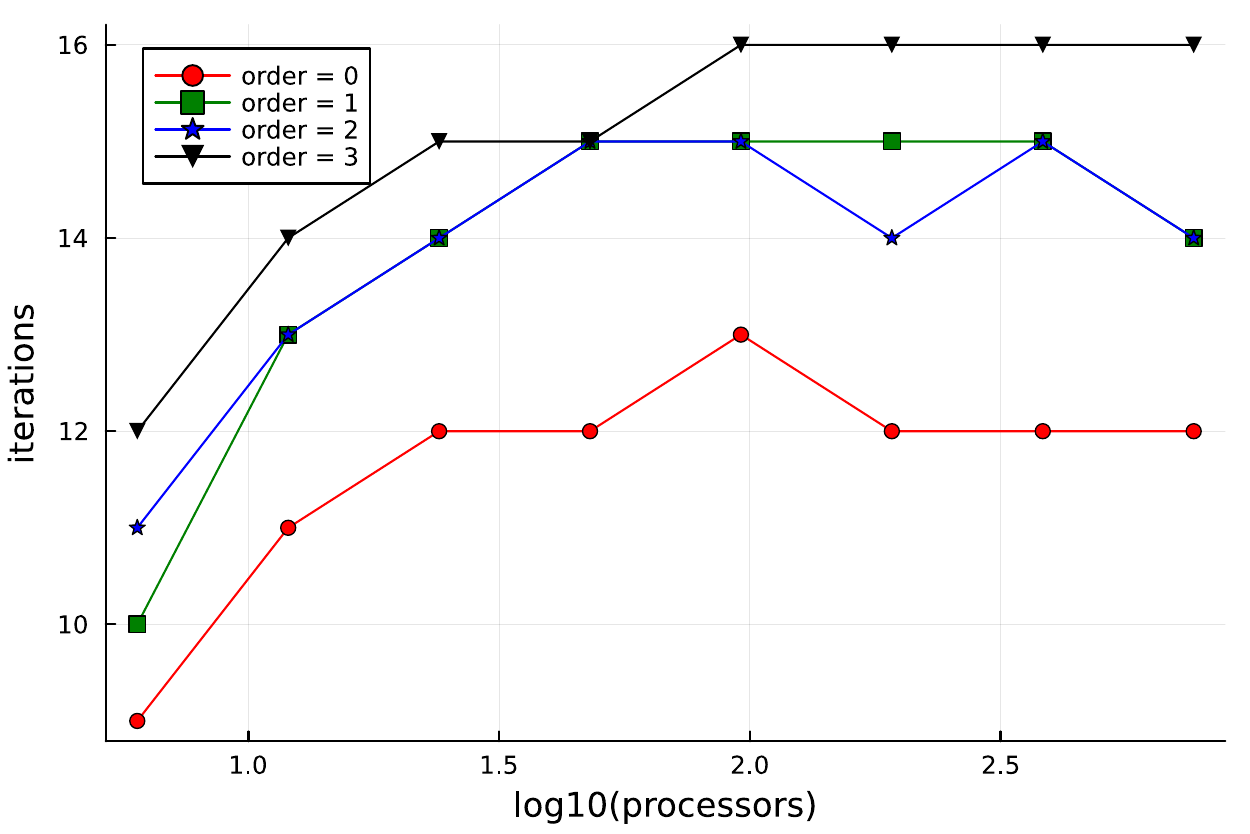}
    \caption{Mixed-order \ac{hho} on polygonal meshes, for $H/h = 8$ (left) and $H/h = 16$ (right).}
  \end{subfigure}

  \caption{Number of processors versus number of FGMRES iterations for different hybrid methods on polygonal meshes and orders $0,1,2,3$.}
  \label{fig:WeakScalabilityTestsPolytopal}
\end{figure}

\begin{figure}[!htbp]
  \centering

  \includegraphics[width=0.45\textwidth]{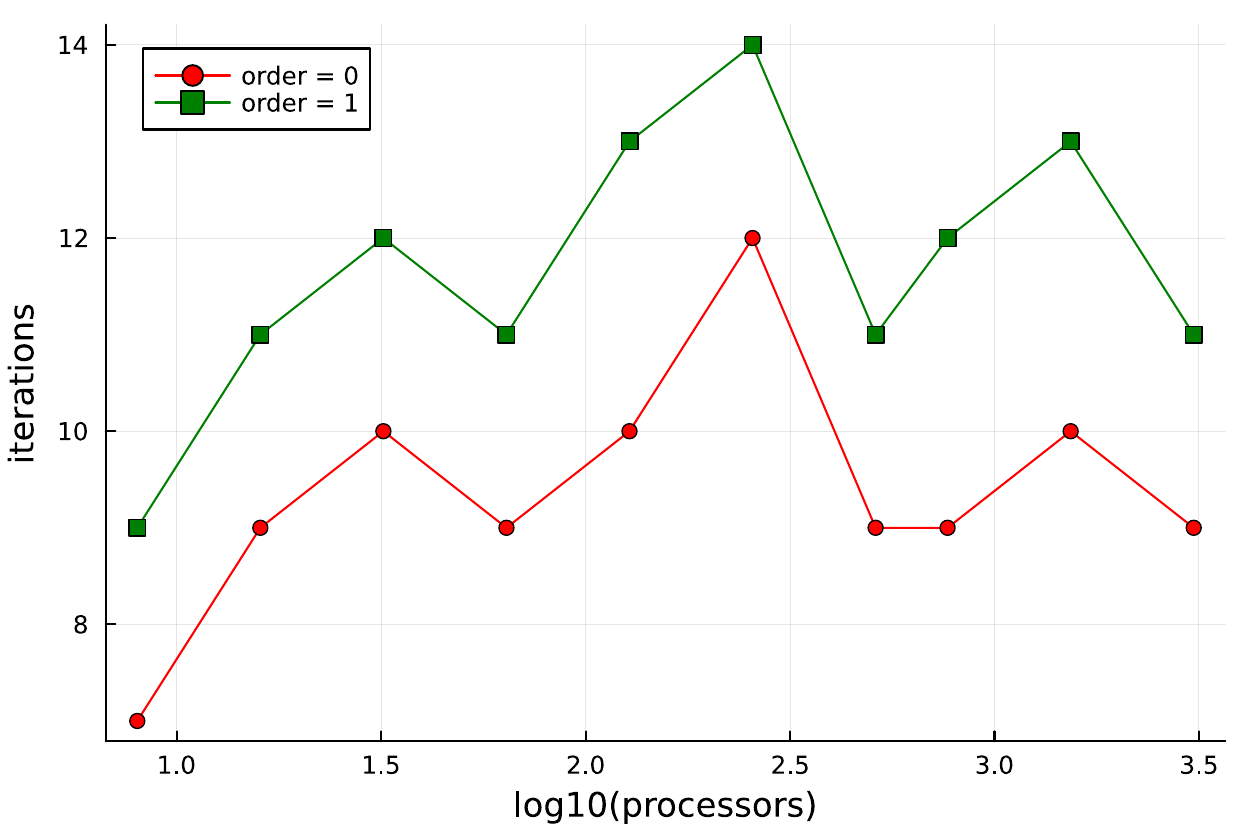}
  \includegraphics[width=0.45\textwidth]{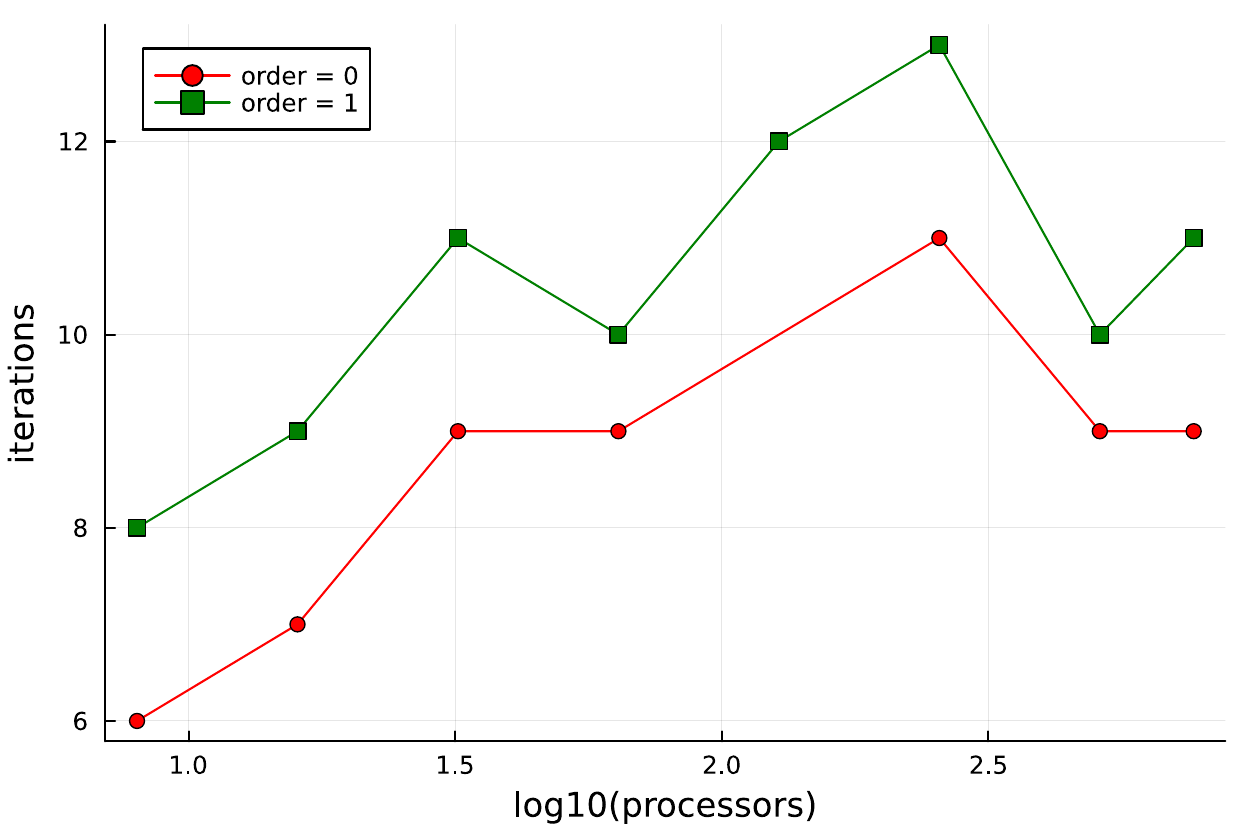}

  \caption{Number of processors versus number of FGMRES iterations for $H/h = 4$, for \ac{hdg} (left) and Mixed-order \ac{hho} (right), on tetrahedral meshes and orders $0,1$.}
  \label{fig:WeakScalabilityTests3D}
\end{figure}

\subsection{Piecewise discontinuous coefficients}\label{sec:jumping.coefficients}

The robustness of the BDDC preconditioner with respect to large jumps in the diffusion coefficient has been demonstrated for standard finite element methods (see, e.g.,~\cite{10.1007/s10915-018-0870-z}). In this case, robustness is achieved by partitioning the mesh into subdomains that align with the discontinuities of the diffusion coefficient and modifying the weighting operator in \eqref{def:weightingOp} to take into account the diffusion coefficients on each side of the subdomain interfaces.

To test the robustness of the proposed BDDC preconditioner for discontinuous skeletal methods in the presence of piecewise discontinuous coefficients, we consider \eqref{eq:continuous.problem} in the case where $\alpha$ is constant in each subdomain but possibly discontinuous accross subdomains, that is, $\alpha|_T = \alpha_T$ is constant for each subdomain $T$. We will limit our experiments to the mixed-order HHO method, modified as in \cite[Section 4.2]{di-pietro.droniou:2020:hybrid} to account for the discontinuous diffusion coefficient. 

We define the modified weighting operator $W_{h,\alpha}:\Uhd \to \Uh$ such that given $\ul{u}_h \in \Uhd$ we have
\begin{align}\label{def:weightingOpJumps}
  {(W_{h,\alpha}(\ul{u}_h)(T))_f} = {(W_h(\ul{u}_h)(T'))_f} = w_{TF} (\ul{u}_h(T))_f + w_{T'F}(\ul{u}_h(T'))_f \quad \forall f\in\Fh(F),
\end{align}
where $T$ and $T'$ are the two subdomains sharing the face $F$, and the weights are defined as
\begin{align*}
  w_{TF} = \frac{\alpha_{T'}}{\alpha_T + \alpha_{T'}}, \quad w_{T'F} = \frac{\alpha_T}{\alpha_T + \alpha_{T'}}.
\end{align*}
We use the same setup as in Section \ref{sec:WeakScalabilityTests}, with $d=2$, $N_p = 16 \times 16 = 256$ processors, subdomain size $H/h=8$, and polynomial order $1$. We consider domain-dependent diffusion where the first $8$ rows of subdomains (roughly $\{ (x,y) \in \Omega : y \leq 0.5\}$) have diffusion coefficient $\alpha_{min} = 1$, and the other $8$ rows have diffusion coefficient $\alpha_{max} =10^k$ for $k=1,\ldots,6$.

\begin{table}
  \begin{tabular}{|c|c|c|}
    $\alpha_{max}$ & Simplicial mesh & Polygonal mesh \\
    \hline
    $10^1$ & 13 & 12 \\
    $10^2$ & 13 & 12 \\
    $10^3$ & 13 & 12 \\
    $10^4$ & 13 & 12 \\
    $10^5$ & 13 & 12 \\
    $10^6$ & 13 & 12 \\
  \end{tabular}
  \caption{Number of FGMRES iterations for mixed-order HHO method with piecewise discontinuous diffusion coefficients on simplicial and polygonal meshes. Fixed parameters: $d=2$, $N_p=256$, $H/h=8$, polynomial order $1$.}
  \label{tab:jumping.coeffs}
\end{table}

The number of FGMRES iterations required for convergence are reported in Table \ref{tab:jumping.coeffs} for both simplicial and polygonal meshes. The results show that the number of iterations remain constant as the jump in the diffusion coefficient increases, demonstrating the robustness of the proposed BDDC preconditioner with respect to piecewise discontinuous coefficients for the mixed-order HHO method.

\section*{Acknowledgments}
This research was supported by the Australian Government through the Australian Research Council (project numbers DP210103092 and DP220103160). Computational resources were provided by the National Computational Infrastructure (NCI) and the Pawsey Supercomputing Research Centre through the NCMAS Merit Allocation Schemes, as well as by Monash eResearch via the Monash NCI scheme. The authors gratefully acknowledge this support.
We also acknowledge the funding of the European Union through the ERC Synergy scheme (NEMESIS, project number 101115663). Views and opinions expressed are however those of the authors only and do not necessarily reflect those of the European Union or the European Research Council Executive Agency. Neither the European Union nor the granting authority can be held responsible for them.

\printbibliography

\end{document}